\DeclarePairedDelimiterX{\norm}[1]{\lVert}{\rVert}{#1}
\tikzset{node distance=1.5cm, auto}
\DeclareMathOperator*{\spn}{span}
\DeclareMathOperator*{\clspn}{\overline{\spn}}
\theoremstyle{plain}
\newtheorem{theorem}{Theorem}[section]
\newtheorem{corollary}[theorem]{Corollary}
\newtheorem{lemma}[theorem]{Lemma}
\newtheorem{proposition}[theorem]{Proposition}
\theoremstyle{definition}
\newtheorem{definition}[theorem]{Definition}
\newtheorem{example}[theorem]{Example}
\newtheorem{remark}[theorem]{Remark}
\numberwithin{equation}{section}
\newcommand{\<}{\langle}
\renewcommand{\>}{\rangle}
\newcommand{\pre}[1]{{}_{#1}}
\newcommand{\tn}{\textnormal}
\renewcommand{\ker}{\operatorname{Ker}}
\newcommand{\id}{\operatorname{id}}
\newcommand{\ad}{\operatorname{Ad}}
\newcommand{\fun}{\mathcal E}
\newcommand{\AXA}{$\pre AX_A$}
\newcommand{\cat}{\ensuremath{\mathsf{ECCor}}}
\newcommand{\AMB}{$\pre AM_B$}
\newcommand{\AXC}{$\pre AX_C$}
\newcommand{\BYD}{$\pre BY_D$}
\newcommand{\AXB}{$\pre AX_B$}
\newcommand{\ANB}{$\pre AN_B$}
\newcommand{\BYC}{$\pre BY_C$}
\newcommand{\AAA}{$\pre AA_A$}
\newcommand{\BNC}{$\pre BN_C$}
\newcommand{\BYB}{$\pre BY_B$}
\newcommand{\CZC}{$\pre CZ_C$}
\newcommand{\C}{$C^*$}
\newcommand{\ots}{\otimes_A}
\newcommand{\otss}{\otimes_B}
\newcommand{\KK}{\mathcal K}
\newcommand{\LL}{\mathcal L}
\newcommand{\NN}{\mathbb N}
\newcommand{\ZZ}{\mathbb Z}
\newcommand{\TT}{\mathbb T}
\newcommand{\XX}{\mathcal X}
\newcommand{\norms}{\norm[\Big]}
\newcommand{\OX}{\mathcal{O}_X}
\newcommand{\TX}{\mathcal{T}_X}
\newcommand{\OY}{\mathcal{O}_Y}
\newcommand{\OZ}{\mathcal{O}_Z}
\newcommand{\On}{\OY^n}
\newcommand{\tsum}{\textstyle\sum}
\newcommand{\core}{\mathscr B_{[0,\infty)}}
\newcommand{\cores}{\mathscr B_{[1,\infty)}}
\newcommand{\DRD}{$\pre DR_D$}
\newcommand{\ARB}{$\pre AR_B$}
\newcommand{\BSA}{$\pre BS_A$}
\newcommand{\BNA}{$\pre BN_A$}
\newcommand{\CKD}{$\pre CK_D$}
\newcommand\myeq{\stackrel{\mathclap{\normalfont\mbox{*}}}{=}}
\newcommand\myothereq{\stackrel{\mathclap{\normalfont\mbox{**}}}{=}}
\renewcommand{\subset}{\subseteq}
\newcommand{\otsb}{\otimes_{\core}}
\newcommand{\ot}{\otimes}
\newcommand{\B}{\mathscr B}
\newcommand{\otsc}{\otimes_C}
\newcommand{\ec}{\emph{C}}
\newcommand{\oy}{\otimes_{\OY}}
\newcommand{\X}{\widetilde{X}}
\newcommand{\OXX}{\mathcal O_{\X}}
\newcommand{\OXXX}{\mathcal O_{\XX}}
\renewcommand{\zeta}{\eta}
\newcommand{\ibm}{imprimitivity bimodule}
\newcommand{\hbm}{Hilbert bimodule}
\newcommand{\righttext}[1]{\qquad\text{#1 }}
\newcommand{\midtext}[1]{\qquad\text{#1 }\qquad}
\def\paragraph#1{{\bf #1\ }}
\newcommand{\oba}{\pre {A}}
\newcommand{\ma}{m_A}
\newcommand{\mb}{m_B}
\newcommand{\ily}{i_{l,y}}
\newcommand{\ilx}{i_{l,x}}
\newcommand{\irx}{i_{r,x}}
\newcommand{\rz}{1_R\ot\gamma_z}
\renewcommand{\zeta}{\eta}
\renewcommand{\cite}{\citep}
\renewcommand{\restriction}[1]{|_{#1}}
\begin{document}
\title{Passing \C-correspondence Relations to the Cuntz-Pimsner algebras}
\author{M. Ery\"uzl\"u}
\address{Department of Mathematics, University of Colorado Boulder, Boulder, CO 80309-0395}
\email{Menevse.Eryuzlu@colorado.edu}

\date{\today}


\subjclass[2020]{Primary 46L08; Secondary 18B99}
\keywords{$C^*$-correspondence, Cuntz-Pimsner algebra, Morita equivalence, Shift Equivalence}


\begin{abstract}
We construct a functor that maps \C-correspondences to their Cuntz-Pimsner algebras. Applications include a  generalization of the well-known result of Muhly and Solel:  Morita equivalent \C-correspondences have Morita equivalent Cuntz-Pimsner algebras; as well as the result of Muhly, Pask, and Tomforde: regular strong shift equivalent \C-correspondences have Morita equivalent Cuntz-Pimsner algebras.

. 
 
\end{abstract}

\maketitle

\section{Introduction}

When the generalization of Cuntz-Pimsner algebras was fully completed by Katsura \citep{katsura}, two natural questions arose.

\begin{itemize}
\item Can Morita theory be investigated in the \C-correspondence setting? 
\item If there is a certain relation between two \C-correspondences, what can be said about their Cuntz-Pimsner algebras? 
\end{itemize}

In 1998, Muhly and Solel presented a ground breaking work on the investigation of Morita theory in the \C-correspondence setting \citep{MS}. They developed a notion of Morita equivalence for given \C-correspondences \AXA\ and \BYB , where there is an \ibm\ \AMB\ so that
\[ X\ots M \cong M\otss Y \] as $A-B$ correspondences. They proved that if two \emph{injective} \C-correspondences are Morita equivalent  then the corresponding Cuntz-Pimsner algebras are Morita equivalent in the sense of Rieffel. In \citep{EKK}, the authors presented an elegant  proof of a generalized result where they drop the assumption of injectivity. In 2008, Muhly, Pask and Tomforde \citep{MPT} introduced a weaker relation between \C-correspondences: \emph{strong shift equivalence}. They proved that regular strong shift equivalent correspondences have Morita equivalent Cuntz-Pimsner algebras. Our motivation was to construct a method that significantly shortens the proofs of these results as well as allows us to easily determine the \ibm\ between the associated Cuntz-Pimsner algebras.

To study the relation between \C-correspondences and their Cuntz-Pimser algebras, we define a functor from a category we call \cat\ to the enchilada category. We constract \cat\ so that two objects being isomorphic is equivalent to them being Morita equivalent as \C-correspondences. The enchilada category has \C-algebras as objects, and isomorphism classes of \C-correspondences as morphisms. Our functor maps a given \C-correspondence \AXA\ to its Cuntz-Pimsner algebra $\OX$, and a morphism from \AXA\ to \BYB\ is mapped to the isomorphism class of an $\OX-\OY$ correspondence.

One of our initial hurdles was to assign an $\OX-\OY$ correspondence to a given morphism \AXA\ $\rightarrow$ \BYB\ in \cat\ . We overcome this issue by defining an \emph{injective}  covariant $(\pi,\Phi)$ representation for \AXA\ (Proposition \ref{rep}).  We prove that the representation $(\pi,\Phi)$  admits a gauge action (Proposition \ref{gauge}). Then the Gauge Invariant Uniqueness Theorem gives us one of our main results:  $\OX$ is isomorphic to the \C-algebra generated by the representation $(\pi,\Phi)$.

In Section 6 we use our techniques to prove the results regarding Morita equivalent and strong shift equivalent correspondences as we aimed. There is in fact more accomplished in that section: we prove that the Morita equivalence between the Cuntz-Pimsner algebras associated to the regular strong shift equivalent correspondences is  gauge equivariant (Theorem \ref{equi}).

This paper serves as an updated version of \cite{failure2}. The previous definition of \cat  , unfortunately, yielded a subtle gap in the proof of \cite[Theorem 5.1]{failure2}: the functor might not be well-defined. We conquer this problem by modifying the morphisms in \cat , which does not affect any of our main results but one application. In fact, it is still an open problem whether \cite[Theorem 6.7]{failure2} holds.


\emph{Acknowledgment:} The author is grateful to Toke Carlsen for pointing out the gap in \cite{failure2}, and to Adam Rennie, Alexander Mundey and John Quigg for valuable discussions.

\section{Prelimineries}

A \C-correspondence \AXB\ is a right Hilbert $B$-module equipped with a left action given by a homomorphism $\varphi_X: A\rightarrow \LL(X),$ where $\LL(X)$ denotes the \C-algebra of adjointable operators on $X$. The correspondence \AXB\ is called \emph{non-degenerate} if the set $A\cdot X = \{\varphi_X(a)x: a\in A, x\in X \}$ is dense in $X$.  Note here that by Cohen-Hewitt factorization theorem we have $\overline{A\cdot X}=A\cdot X.$ 
In this paper \emph{all our correspondences will be non-degenerate by standing hypothesis.}

A \C-correspondence \AXB\ is called \emph{injective} if the left action $\varphi_X: A\rightarrow \LL(X)$ is injective; it is called \emph{regular} if the homomorphism $\varphi_X$ is injective  and $\varphi_{X}(A)$ is contained in the \C-algebra $\KK(X)$ of compact operators on $X$.  

A \C-correspondence homomorphism is a triple $(\Phi, \varphi_l, \varphi_r)$: \AXC\ $\rightarrow$ \BYD\ consist of a linear map $\Phi: X\rightarrow Y$, and homomorphisms $\varphi_l: A\rightarrow B$ and $\varphi_r: C\rightarrow D$  satisying
\begin{enumerate}[(i)]
\item $\Phi(a\cdot x)=\varphi_l(a)\cdot \Phi(x)$,
\item $\varphi_r( \<x,z\>_C ) = \<\Phi(x), \Phi(z)\>_{D}$,
\end{enumerate}
for all $a\in A$, and $x,z\in X.$

The triple $(\Phi, \varphi_l, \varphi_r)$ is called  a \emph{\C-correspondence isomorphism} if, in addition, $\Phi$ is bijective and $\varphi_l, \varphi_r$ are isomorphisms. In this paper we mostly deal with the situations where $A=B$ and $C=D$. In those cases we just take $\varphi_l$ and $\varphi_r$  to be the identity maps on $A$ and $C$ respectively, and we simply denote the isomorphism \AXC $\rightarrow$ $\pre AY_C$ by $\Phi$ instead of the triple $(\Phi, \id_A , \id_C)$.  We denote by $\ad\Phi: \LL(X)\rightarrow \LL(Y)$ the associated \C-algebra isomorphism.

Let $A_0$ and $B_0$ be dense $*$-algebras of \C-algebras $A$ and $B$, respectively. An $A_0-B_0$ bimodule $X_0$ is called a \emph{pre-correspondence} if it has a $B_0$-valued semi-inner product satisfying 
\[ \<x, y\cdot b\> = \<x,y\> b ,   \midtext{} \<x,y\>^*=\<y,x\> \]
and $\<a\cdot x, a\cdot x\> \leq \norm{a}^2\<x,x\>$ for all $a\in A_0, b\in B_0$ and $x,y\in X_0$. The Hausdorff completion $X$ of $X_0$ becomes an  $A-B$ correspondence by taking the limits of the operations.

\begin{proposition}\label{pre}\tn{\citep[Lemma 1.23]{enchilada}}
Let $X_0$ be an $A_0-B_0$ pre-correspondence, and let  $Z$ be an $A-B$ correspondence. If there is a map $\Phi : X_0 \rightarrow Z$ satisfying 
\[ \Phi (a\cdot x) = \varphi_Z (a) \Phi(x)  \midtext{and} \<\Phi(x), \Phi(y) \>_B = \<x, y\>_{B_0} , \]
for all $a\in A_0$ and $x,y\in X_0$, then $\Phi$ extends uniquely to an injective $A-B$ correspondence homomorphism $\tilde{\Phi}: X \rightarrow Z . $
\end{proposition}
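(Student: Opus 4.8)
The plan is to first upgrade the two stated conditions to the assertion that $\Phi$ is an isometric $A_0$--$B_0$ bimodule map, then extend $\Phi$ to the Hausdorff completion $X$ by continuity, and finally check that the correspondence-homomorphism axioms survive the extension. The key algebraic point is that the identity $\langle\Phi(x),\Phi(y)\rangle_B = \langle x,y\rangle_{B_0}$ already forces a good deal: for $u_1,\dots,u_n\in X_0$ and $b_1,\dots,b_n$ each either an element of $B_0$ or a complex scalar, expanding by sesquilinearity and applying the identity termwise gives $\langle\sum_i\Phi(u_i)\cdot b_i,\ \sum_j\Phi(u_j)\cdot b_j\rangle_B = \langle\sum_i u_i\cdot b_i,\ \sum_j u_j\cdot b_j\rangle_{B_0}$. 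In particular, whenever $\sum_i u_i\cdot b_i = 0$ in $X_0$ we obtain $\langle\sum_i\Phi(u_i)\cdot b_i,\ \sum_i\Phi(u_i)\cdot b_i\rangle_B = 0$, and since $Z$ is a genuine Hilbert $B$-module this yields $\sum_i\Phi(u_i)\cdot b_i = 0$ in $Z$. Applying this to the relations $(x+y)-x-y = 0$, $(\lambda x)-\lambda x = 0$ and $(x\cdot b)-x\cdot b = 0$ in $X_0$ shows that $\Phi$ is additive, $\mathbb C$-linear, and right $B_0$-linear; together with the first hypothesis this makes $\Phi$ an $A_0$--$B_0$ bimodule map. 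Taking $n = 1$ and $b_1 = 1$ gives $\norm{\Phi(x)}^2 = \norm{\langle\Phi(x),\Phi(x)\rangle_B} = \norm{\langle x,x\rangle_{B_0}} = \norm{x}^2$, so $\Phi$ is isometric for the seminorm on $X_0$; hence it kills the null space $N_0 := \{x\in X_0 : \langle x,x\rangle_{B_0} = 0\}$, descends to a linear isometry on the dense subspace $X_0/N_0 \subseteq X$, and extends uniquely to a linear isometry $\tilde\Phi : X\to Z$ (using completeness of $Z$). Uniqueness of the extension and injectivity of $\tilde\Phi$ are then immediate.

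Next I would verify the correspondence-homomorphism axioms for $\tilde\Phi$. For the inner product, the maps $(x,y)\mapsto\langle\tilde\Phi(x),\tilde\Phi(y)\rangle_B$ and $(x,y)\mapsto\langle x,y\rangle_B$ are jointly continuous on $X\times X$ and agree on the dense set $X_0\times X_0$, hence agree everywhere. For the left action, $\tilde\Phi(\varphi_X(a)x) = \varphi_Z(a)\tilde\Phi(x)$ holds for $a\in A_0$, $x\in X_0$ by hypothesis; fixing $a$ and using boundedness of $\varphi_X(a)$ and $\varphi_Z(a)$ it extends to all $x\in X$, and then, using that $\varphi_X$ and $\varphi_Z$ are contractive $*$-homomorphisms and $A_0$ is dense in $A$, it extends to all $a\in A$. (Right $B$-linearity of $\tilde\Phi$ passes from that of $\Phi$ in the same way.) Hence $\tilde\Phi : X\to Z$ is the desired injective $A$--$B$ correspondence homomorphism.

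I do not expect a genuine obstacle: the result is ``soft'', and once the bimodule-map and isometry observations of the first paragraph are established, the rest is a routine density-and-continuity argument. The one place where something has to be noticed rather than merely checked is recognising that the two stated conditions already force $\Phi$ to be a bona fide $A_0$--$B_0$ bimodule map --- so that no separate linearity or right-linearity hypothesis is needed and $\Phi$ automatically annihilates the null space. The only other thing needing care is keeping track of which continuity is invoked at each extension step, namely joint continuity of the $B$-valued inner product and contractivity of $\varphi_X$ and $\varphi_Z$ when passing from $A_0$ to $A$.
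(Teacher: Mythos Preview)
The paper does not supply its own proof of this proposition; it simply quotes the result from \cite[Lemma~1.23]{enchilada}. Your argument is correct and is essentially the standard one: use the inner-product identity to see that $\Phi$ is automatically a seminorm-isometric $A_0$--$B_0$ bimodule map (hence factors through the null space), extend by density and completeness of $Z$, and then pass the two axioms to the limit by joint continuity of the inner product and contractivity of the left-action homomorphisms. One cosmetic remark: your observation that right $B_0$-linearity is forced by the inner-product condition is nice but, strictly speaking, unnecessary for the conclusion, since in the paper's definition of a correspondence homomorphism right linearity is not listed as a separate axiom --- it follows automatically from condition~(ii) by the same positivity trick you used. Otherwise there is nothing to add.
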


The \emph{balanced tensor product} $X\otimes_BY$ of
an $A-B$ correspondence $X$ and a $B-C$ correspondence $Y$ is
formed as follows:
the algebraic tensor product $X\odot Y$
is a pre-correspondence with the $A-C$ bimodule structure satisfying
\[
a(x\otimes y)c=ax\otimes yc
\righttext{for}a\in A,x\in X,y\in Y,c\in C,
\]
and the unique $C$-valued semi-inner product whose values on elementary tensors are given by
\[
\<x\otimes y,u\otimes v\>_C=\<y,\<x,u\>_B\cdot v\>_C
\righttext{for}x,u\in X,y,v\in Y.
\]

This semi-inner product defines a $C$-valued inner product on the quotient of $X\odot Y$ by the subspace generated by elements of form 
\[ x\cdot b \otimes y - x\otimes \varphi_Y(b)y \righttext{($x\in X$, $y\in Y$, $b\in B$)}.\]
The  completion, i.e., the Hausdorff completion of $X\odot Y$, is an $A-C$ correspondence $X\otimes_B Y$, where the left action is given by 
 \[A\rightarrow \LL(X\otss Y),  \righttext{$a\mapsto \varphi_X(a)\ot 1_Y,$}\]
for $a\in A.$ 

We denote the canonical image of $x\otimes y$ in $X\otss Y$ by $x\otss y$. The term \emph{balanced} refers to the property
\[
x\cdot b\otss y=x\otss b\cdot y
\righttext{for}x\in X,b\in B,y\in Y,
\]
which is automatically satisfied.

\begin{lemma}[\cite{fowler}]\label{compacts} Let $X$ be a \C-correspondence over $A$ and let $I$ be an ideal of $A$. Then we have the following.
\begin{enumerate}[\normalfont(1)]
\item There is an isometric embedding $\iota: \KK(XI)\rightarrow \KK(X)$ such that 
\[\theta_{\xi, \nu}\mapsto \theta_{\xi, \nu} \midtext{} \tn{for $\xi,$ $\nu \in XI$}.\] 
Moreover, for $T\in \KK(XI)$,  the operator $\iota(T)$ is the unique extension of $T$ to an operator in $\LL(X)$ whose range is contained in $XI.$ 
\item Assume $Y$ is an $A-B$ correspondence. If the left action $\varphi_Y: A\rightarrow \LL(Y)$ is injective, the map 
$\iota: T \mapsto T\ot 1_Y$  gives an isometric homomorphism of $\LL(X)$ into $\LL(X\ots Y).$ If, in addition, $\varphi_Y(A)\subset\KK(Y)$, then $\iota$ embeds $\KK(X)$ into $\KK(X\ots Y).$ 
\end{enumerate}
\end{lemma}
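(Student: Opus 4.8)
The plan is to handle the two parts separately; throughout, $XI$ denotes the closed submodule $\overline{X\cdot I}$ of $X$. For Part (1) I would begin by recording the standard facts that $XI$ is a closed right Hilbert $A$-submodule of $X$, stable under the right $A$-action, that $\la X,XI\ra_A\subseteq I$ and $\la XI,X\ra_A\subseteq I$, and that $XI=\{z\in X:\la z,z\ra_A\in I\}$ — the nontrivial inclusion coming from $z=\lim_n z\cdot\la z,z\ra_A^{1/n}$ together with $\la z,z\ra_A^{1/n}\in I$. For $\xi,\nu\in XI$ the operator $\theta_{\xi,\nu}$ already belongs to $\KK(X)$ and has range in $\overline{\xi A}\subseteq XI$, so $\theta_{\xi,\nu}\mapsto\theta_{\xi,\nu}$ defines a $*$-homomorphism $\iota_0$ from the dense $*$-subalgebra $\spn\{\theta_{\xi,\nu}:\xi,\nu\in XI\}$ of $\KK(XI)$ into $\KK(X)$. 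The crux is that $\iota_0$ is isometric. For a finite sum $S$ of such rank-ones, $\iota_0(S)|_{XI}=S$ gives $\norm{\iota_0(S)}_{\LL(X)}\ge\norm{S}_{\LL(XI)}$; conversely, given $x\in X$, the element $z:=\iota_0(S)x$ lies in $XI$, and using $z\in XI$ together with $\iota_0(S)^*|_{XI}=S^*$ one obtains $\la z,z\ra=\la z,\iota_0(S)x\ra=\la\iota_0(S)^*z,x\ra=\la S^*z,x\ra$, whence $\norm{z}^2\le\norm{S^*z}\,\norm{x}\le\norm{S}_{\LL(XI)}\norm{z}\,\norm{x}$ and so $\norm{\iota_0(S)}_{\LL(X)}\le\norm{S}_{\LL(XI)}$. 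Thus $\iota_0$ extends to an isometric $*$-homomorphism $\iota\colon\KK(XI)\to\KK(X)$, and approximating an arbitrary $T\in\KK(XI)$ by finite sums shows both $\iota(T)|_{XI}=T$ and that $\iota(T)$ has range in the closed submodule $XI$. For uniqueness, suppose $R\in\LL(X)$ vanishes on $XI$ and has range in $XI$; then $\la R^*x,R^*x\ra=\la x,RR^*x\ra\in\la X,XI\ra\subseteq I$ forces $R^*x\in XI$, while $R|_{XI}=0$ gives $\la R^*x,w\ra=\la x,Rw\ra=0$ for all $w\in XI$, so $R^*x\in XI\cap(XI)^\perp=\{0\}$; hence $R=0$, and applying this to the difference of two extensions yields the uniqueness claim.

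For the first statement of Part (2), for any $T\in\LL(X)$ the assignment $x\ot y\mapsto Tx\ot y$ on the algebraic tensor product is adjointable with adjoint $x\ot y\mapsto T^*x\ot y$, hence extends to $T\ot 1_Y\in\LL(X\ots Y)$, and $T\mapsto T\ot 1_Y$ is a $*$-homomorphism between C$^*$-algebras, therefore automatically norm-decreasing. It thus suffices to show it is injective. If $T\ot 1_Y=0$, then $\la y',\varphi_Y(\la x',Tx\ra)y\ra=\la x'\ot y',Tx\ot y\ra=0$ for all $x,x'\in X$ and $y,y'\in Y$, so $\varphi_Y(\la x',Tx\ra)=0$; injectivity of $\varphi_Y$ gives $\la x',Tx\ra=0$ for all $x,x'$, hence $T=0$. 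So $\iota$ is isometric.

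For the second statement, assume now $\varphi_Y(A)\subseteq\KK(Y)$. Since $\KK(X)$ is the closed linear span of the operators $\theta_{x,x'}$ and $\iota$ is norm-continuous, it is enough to prove $\theta_{x,x'}\ot 1_Y\in\KK(X\ots Y)$. The useful device is the adjointable operator $T_x\colon Y\to X\ots Y$, $y\mapsto x\ot y$, whose adjoint is $\xi\ot\eta\mapsto\varphi_Y(\la x,\xi\ra)\eta$; direct computation gives $T_xT_{x'}^*=\theta_{x,x'}\ot 1_Y$ and $T_{x'}^*T_{x'}=\varphi_Y(\la x',x'\ra)$. Consequently
\[(\theta_{x,x'}\ot 1_Y)(\theta_{x,x'}\ot 1_Y)^*=T_x\,\varphi_Y(\la x',x'\ra)\,T_x^*,\]
which lies in $\KK(X\ots Y)$ because $\varphi_Y(\la x',x'\ra)\in\KK(Y)$ — writing it as a norm-limit of finite sums of rank-ones $\theta_{\eta,\eta'}$ and using $T_x\theta_{\eta,\eta'}T_x^*=\theta_{T_x\eta,T_x\eta'}$. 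Since $\KK(X\ots Y)$ is a closed ideal of $\LL(X\ots Y)$ and $aa^*\in\KK(X\ots Y)$ forces $a\in\KK(X\ots Y)$, we conclude $\theta_{x,x'}\ot 1_Y\in\KK(X\ots Y)$, and the general statement follows by linearity and continuity.

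The two points I expect to need genuine care are the inequality $\norm{\iota_0(S)}_{\LL(X)}\le\norm{S}_{\LL(XI)}$ in Part (1) and the compactness of $\theta_{x,x'}\ot 1_Y$ in Part (2); each is resolved by the right manipulation — the identity $\la z,z\ra=\la S^*z,x\ra$ in the first case, the factorization $\theta_{x,x'}\ot 1_Y=T_xT_{x'}^*$ in the second — after which the remaining work is routine bookkeeping with rank-one operators, approximate units, and norm limits of operators whose range is contained in a closed submodule.
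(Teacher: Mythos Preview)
The paper does not supply its own proof of this lemma: it is quoted verbatim from \cite{fowler} and used as a black box. Your proposal is therefore not competing with any argument in the paper, and it stands on its own as a correct, self-contained proof.

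A few brief comments on the content. In Part~(1), the two nontrivial steps are exactly the ones you flag: the characterisation $XI=\{z\in X:\langle z,z\rangle_A\in I\}$ (which you obtain from $z=\lim_n z\cdot\langle z,z\rangle^{1/n}$) and the norm inequality $\|\iota_0(S)\|_{\LL(X)}\le\|S\|_{\LL(XI)}$ via the identity $\langle z,z\rangle=\langle S^*z,x\rangle$; both are handled cleanly. The uniqueness argument, passing through $R^*x\in XI\cap(XI)^{\perp}$, is the standard one. In Part~(2), the injectivity of $T\mapsto T\otimes 1_Y$ is immediate from the inner-product formula once $\varphi_Y$ is injective, and your factorisation $\theta_{x,x'}\otimes 1_Y=T_xT_{x'}^*$ together with the ideal criterion ``$aa^*\in J\Rightarrow a\in J$'' (which follows at once by passing to the quotient $\LL(X\otimes_A Y)/\KK(X\otimes_A Y)$) gives the compactness claim efficiently. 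This is essentially the argument one finds in the original reference, so nothing is missing.
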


A \emph{Hilbert bimodule} \AXB\ is a \C-correspondence
that is also equipped with an $A$-valued inner product $\pre A\<\cdot,\cdot\>$,
which satisfies
\[
\pre A\<ax,y\>=a\pre A\<x,y\>\midtext{and}\pre A\<x,y\>^*=\pre A\<y,x\>
\]
for all $a\in A,x,y\in X$,
as well as the compatibility property
\[
\pre A\<x,y\>z=x\<y,z\>_B
\righttext{for}x,y,z\in X.
\]
A \hbm\ $\pre AX_B$ is \emph{left-full} if the closed span of $\pre A\<X,X\>$ is all of $A$. 

An \emph{\ibm} \AXB\ is an \hbm\ that is full on both the left and the right. It's dual $\pre B\tilde{X}_A$ is formed as follows: write $\tilde{x}$ when a vector $x\in X$ is regarded as belonging to $\tilde{X}$, define $B-A$ bimodule structure by 
\[ b\tilde{x}a=\widetilde{a^*xb^*}\] 
and the inner product by
\[ \pre B\<\tilde{x},\tilde{y}\>=\<x,y\>_B \midtext{and} \<\tilde{x},\tilde{y}\>_A=\pre A\<x,y\>\]
for $a\in A, b\in B,$ and $x,y\in X. $

The \emph{identity correspondence} on $A$ is the Hilbert bimodule $\pre AA_A$ where bimodule structure given by multiplication, and the inner products are given by 
\[ \oba\<a,b\>=ab^*, \hspace{.5cm} \<a,b\>_A= a^*b, \righttext{for $a,b\in A$}.\]

\begin{lemma}\label{ibm} Let \AXB\ be an \ibm\ and $\pre B\tilde{X}_A$ be it's dual. Then, the maps
\begin{align*}
&m_A: X\otss \tilde{X} \rightarrow A, &\righttext{$ x_1\otss \tilde{x}_2\mapsto \pre A\<x_1, x_2\>$}\\
&m_B: \tilde{X} \ots X \rightarrow B, &\righttext{$\tilde{x}_1\ots x_2 \mapsto \<x_1, x_2\>_B$}
\end{align*}
are \C-correspondence isomorphism satisfying the equality
\[ m_A(x\otss \tilde{y})\cdot z = x\cdot m_B(\tilde{y}\ots x)\]
for all $x,y,z\in X$. 
\end{lemma}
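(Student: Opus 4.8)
The plan is to realise each of $m_A$ and $m_B$ as the canonical extension furnished by Proposition \ref{pre}, and then to promote it to an isomorphism using left-fullness.

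I would first treat $m_A$. Regard $A$ as the identity correspondence $\pre AA_A$ and define $\Phi\colon X\odot\tilde X\to A$ on elementary tensors by $\Phi(x_1\otimes\tilde x_2)=\pre A\langle x_1,x_2\rangle$, extended linearly; equip $X\odot\tilde X$ with the $A$-valued semi-inner product of the balanced tensor product $X\otss\tilde X$, so that it is an $A-A$ pre-correspondence. The $A$-linearity requirement $\Phi(a\cdot(x_1\otimes\tilde x_2))=a\cdot\Phi(x_1\otimes\tilde x_2)$ is immediate from $\pre A\langle ax_1,x_2\rangle=a\,\pre A\langle x_1,x_2\rangle$. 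The inner-product requirement $\langle\Phi(x_1\otimes\tilde x_2),\Phi(u_1\otimes\tilde u_2)\rangle_A=\langle x_1\otimes\tilde x_2,u_1\otimes\tilde u_2\rangle_A$ is the one point that needs care: the left-hand side unwinds to $\pre A\langle x_2,x_1\rangle\,\pre A\langle u_1,u_2\rangle$, while the right-hand side, after invoking the dual structure $b\tilde u=\widetilde{u\,b^*}$, the compatibility axiom $\pre A\langle x,y\rangle z=x\langle y,z\rangle_B$, and the elementary identity $\pre A\langle x,ay\rangle=\pre A\langle x,y\rangle a^*$, unwinds to the same expression. Proposition \ref{pre} then yields an injective $A-A$ correspondence homomorphism $m_A\colon X\otss\tilde X\to A$ extending $\Phi$.

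To see that $m_A$ is surjective, note that since its coefficient maps are identities it is isometric, hence has closed range; that range contains every $\pre A\langle x_1,x_2\rangle$, and these span a dense subspace of $A$ because $X$, being an imprimitivity bimodule, is left-full. Thus $m_A$ is a bijective correspondence homomorphism with identity coefficient maps, i.e.\ a \C-correspondence isomorphism. The statement for $m_B$ follows by the same argument applied to the imprimitivity bimodule $\pre B\tilde X_A$, together with the canonical identification $\widetilde{\tilde X}\cong X$ (under which the $m_A$-type map for $\pre B\tilde X_A$ becomes $m_B$); alternatively one simply reruns the computation with the roles of the two inner products interchanged. Finally, the displayed identity is just the compatibility axiom of the Hilbert bimodule $X$ transcribed through $m_A(x\otss\tilde y)=\pre A\langle x,y\rangle$ and $m_B(\tilde y\ots z)=\langle y,z\rangle_B$.

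The genuine difficulty here is only bookkeeping: keeping track of which inner product enters each balanced tensor product ($X\otss\tilde X$ is balanced over $B$ via $\langle\cdot,\cdot\rangle_B$, whereas $\tilde X\ots X$ is balanced over $A$ via $\langle\tilde x,\tilde y\rangle_A=\pre A\langle x,y\rangle$) and carrying the twist $b\tilde x a=\widetilde{a^*xb^*}$ of the dual module correctly through the inner-product calculation; once the conventions are fixed, every step reduces to a one-line application of the Hilbert-bimodule axioms.
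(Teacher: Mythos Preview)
The paper states this lemma without proof, treating it as a standard fact about imprimitivity bimodules. Your argument is correct and is the expected one: invoke Proposition~\ref{pre} after checking that the map preserves the left action and the inner product, then use fullness for surjectivity, and read off the displayed identity from the compatibility axiom ${}_A\langle x,y\rangle z = x\langle y,z\rangle_B$. The only subtlety, as you note, is the bookkeeping in the inner-product computation for $m_A$, and your outline handles it properly (one convenient way to finish that step is ${}_A\langle x_2,x_1\rangle\,{}_A\langle u_1,u_2\rangle = {}_A\langle x_2,\,{}_A\langle u_2,u_1\rangle x_1\rangle = {}_A\langle x_2,\,u_2\langle u_1,x_1\rangle_B\rangle$, which matches the tensor-product inner product once the dual action is unwound).
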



A \emph{representation} $(\pi,t)$ of \AXA\ on a \C-algebra $B$ consists of a $*-$homomorphism $\pi: A\rightarrow B$ and a linear map $t: X\rightarrow B $ such that 
\[
\pi(a)t(x)=t(\varphi_X(a)(x)) \midtext{and} t(x)^* t(y)=\pi(\<x,y\>_A), 
\]
for $a\in A$ and $x, y\in X$, where $\varphi_X$ is the left action homomorphism associated with \AXA. An application of the \C-identity shows that $t(x)\pi(a)=t(x \cdot a)$ is also valid.  For each representation $(\pi,t)$ of \AXA\ on $B$,  there exist a homomorphism $\Psi_t: \KK(X)\rightarrow B$ such that 
\[
\Psi_t (\theta_{x,y})=t(x){t(y)}^*
\]
for $x,y\in X. $ The representation $(\pi,t)$ is called \emph{injective} if $\pi$ is injective, in which case $t$ is an isometry and $\Psi_t$ is injective. We denote the \C-algebra generated by the images of $\pi$ and $t$ in $B$ by \C$(\pi, t).$ 

\begin{lemma}\tn{\citep[Lemma 2.4]{katsura}} Let $(\pi, t)$ be a representation of a given \C-correspondence \AXA\  . Then we have 
\begin{enumerate}[\normalfont(i)]
\item $\pi(a)\Psi_t(k)=\Psi_t(\varphi_X(a)k)$
\item $\Psi_t(k)t(x)=t(kx)$
\end{enumerate}
for $a\in A$, $x\in X$, and $k\in\KK(X)$. 
\end{lemma}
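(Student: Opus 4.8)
The plan is to reduce both identities to the case of rank-one operators and then extend by linearity and continuity. Since $\KK(X)=\clspn\{\theta_{x,y}:x,y\in X\}$, since $\Psi_t$ is a $*$-homomorphism of \C-algebras and hence contractive, and since $t$ is contractive (from $\norm{t(x)}^2=\norm{t(x)^*t(x)}=\norm{\pi(\<x,x\>_A)}\le\norm{x}^2$), each of the maps $k\mapsto\pi(a)\Psi_t(k)$, $k\mapsto\Psi_t(\varphi_X(a)k)$, $k\mapsto\Psi_t(k)t(x)$ and $k\mapsto t(kx)$ is norm-continuous in $k$. Hence it is enough to verify (i) and (ii) for $k=\theta_{x,y}$ with $x,y\in X$, and the general case follows by taking finite linear combinations and passing to norm limits.

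For (i), I would use the elementary identity $\varphi_X(a)\theta_{x,y}=\theta_{\varphi_X(a)x,\,y}$ in $\KK(X)$, checked by evaluating both operators on an arbitrary $w\in X$. Applying $\Psi_t$ together with its defining property $\Psi_t(\theta_{u,v})=t(u)t(v)^*$ gives $\Psi_t(\varphi_X(a)\theta_{x,y})=t(\varphi_X(a)x)\,t(y)^*$, while $\pi(a)\Psi_t(\theta_{x,y})=\pi(a)t(x)t(y)^*=t(\varphi_X(a)x)t(y)^*$ by the representation relation $\pi(a)t(x)=t(\varphi_X(a)x)$. The two sides coincide, which proves (i) on rank-one operators and, by the reduction above, for all $k\in\KK(X)$.

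For (ii), I would use the elementary identity $\theta_{x,y}z=x\cdot\<y,z\>_A$. Then $t(\theta_{x,y}z)=t(x\cdot\<y,z\>_A)=t(x)\pi(\<y,z\>_A)$, the last step being the relation $t(x\cdot a)=t(x)\pi(a)$ noted right after the definition of a representation; on the other hand $\Psi_t(\theta_{x,y})t(z)=t(x)t(y)^*t(z)=t(x)\pi(\<y,z\>_A)$ by the representation relation $t(y)^*t(z)=\pi(\<y,z\>_A)$. So both sides agree on rank-one operators, and extending by linearity and continuity gives (ii).

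I do not expect a real obstacle: the lemma merely records that $\Psi_t$ is compatible with $\pi$ and $t$ through the $\KK(X)$-module structure on $X$, and this is forced by the three defining relations of a representation together with the standard rank-one calculus $\varphi_X(a)\theta_{x,y}=\theta_{\varphi_X(a)x,y}$ and $\theta_{x,y}z=x\<y,z\>_A$. The only point needing a moment of care is the continuity/density bookkeeping in the extension from $\spn\{\theta_{x,y}\}$ to $\KK(X)$, but that is immediate from contractivity of $\Psi_t$, $\pi$ and $t$ and boundedness of $\varphi_X(a)$.
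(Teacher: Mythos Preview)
Your proof is correct and is exactly the standard argument: verify both identities on rank-one operators using the defining relations of a representation and the rank-one calculus, then extend to all of $\KK(X)$ by linearity and norm continuity. The paper does not supply its own proof of this lemma; it simply quotes it from \citep[Lemma 2.4]{katsura}, and your argument is precisely the one found there.
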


 Now, consider a \C-correspondence \AXA . Let  $X^{\otimes 0}=A,$  $X^{\otimes 1}=X$, and for $n\geq 2$ let $X^{\otimes n}=X\ots X^{\otimes (n-1)}.$ Each $X^{\otimes n}$ is a \C-correspondence over $A$ with
\begin{align*}
\varphi_n (a)(x_1\ots x_2\ots ......\ots x_n) &:=\varphi_X(a)x_1 \ots x_2 \ots...\ots x_n \\
(x_1\ots x_2\ots......\ots x_n)\cdot a &:= x_1\ots x_2\ots......\ots (x_n\cdot a).
\end{align*}
The operator $\varphi_0(a)\in \KK(X^{\otimes 0})$ is just the left multiplication operator on $A$.  For any given representation $(\pi,t)$ of \AXA\ on a \C-algebra $B$,  set $t^0=\pi$ and $t^1=t$. For $n\geq 2$, define a linear map 
\[ t^n : X^{\otimes n} \rightarrow B, \midtext{} t^n(x\ots y)=t(x)t^{n-1}(y),\]
where $x\in X$, $y\in X^{\otimes n-1}$. Then $(\pi, t^n)$ is a representation of $\pre AX^{\otimes n}_A$ on $B$. The associated homomorphism $\Psi_{t^n}: \KK(X^{\otimes n})\rightarrow B$ is given by 
\[ \Psi_{t^n}(\theta_{\xi,\nu}) = t^n(\xi)t^n(\mu)^*,\]
for $\xi, \mu \in X^{\otimes n}$. If $(\pi, t)$ is injective, then the linear map $t^n$ is isometric, and  the homomorphism $\Psi_{t^n}$ is injective. 

For a representation $(\pi, t)$ of \AXA\ we have  \citep[Proposition 2.7]{katsura},
\[ C^*(\pi, t)= \tn{ $\clspn\{t^n(x_n)t^m(y_m)^*:$ $x_n\in X^{\otimes n}, y_m\in X^{\otimes m},$ $n,m\geq 0$\}}. \]

Consider a \C-correspondence \AXA . The ideal $J_X$ is defined as
\begin{align*}
 J_X &= \varphi_X^{-1}(\KK(X))\cap (\ker\varphi_X)^{\perp}\\
 &=\text{\{$a\in A$ : $\varphi_X(a)\in\KK(X)$ and $ab=0$ for all $b\in\ker\varphi_X$}\},
\end{align*}
and is called the \emph{Katsura ideal}. The ideal $J_X$ is the largest ideal of $A$ such that the restriction map $J_X \rightarrow \LL(X)$ is an injection into $\KK(X).$ Notice here that if \AXA\ is regular, i.e, the left action $\varphi_X: A\rightarrow\LL(X)$ is injective and $\varphi_X(A)\subset\KK(X)$, then $J_X=A.$

A representation $(\pi, t)$ of \AXA\ is called \emph{covariant} if $\pi(a)=\Psi_{t} (\varphi_X(a) )$, for all $a\in J_X .$

The \C-algebra generated by the universal representation  of \AXA\  is called the \emph{Toeplitz algebra} $\TX$ of \AXA  . The \C-algebra generated by the universal covariant representation  of \AXA\ is called the \emph{Cuntz-Pimsner} algebra $\OX$ of \AXA .

\section{Categories}

As mentioned in the introduction, in the enchilada category our objects are $C^*$-algebras, and the morphisms from $A$ to $B$ are the isomorphism classes of $A-B$ correspondences. The composition of [\AXB]: $A\rightarrow B$ with [\BYC]: $B\rightarrow C$ is the isomorphism class of the balanced tensor product $\pre A(X\otss Y)_C$; the identity morphism on $A$ is the isomorphism class of the identity correspondence $\pre AA_A$, and the zero morphism $A\rightarrow B$ is $[\pre A0_B].$  It is a crucial fact for this work that a morphism \tn{[\AXB]} is an isomorphism in the enchilada category if and only if \AXB\ is an \ibm\ \cite[Lemma 2.4]{enchilada}. A detailed study of the enchilada category can be found in \cite{taco}.

\underline{\emph{Notation:}}  For any given \C-correspondence \AMB\  we denote the correspondence isomorphism
\[M\ots A \rightarrow M, \righttext{ $m\ots a \mapsto m\cdot a$}\]
by $\iota_{r,m}$, and the correspondence  isomorphism
\[A\ots M\rightarrow M, \righttext{ $a\ots m \mapsto a\cdot m$}\]
by $\iota_{l,m}.$

\begin{definition} Let \AXA , \BYB , and \AMB\ be \C-correspondences, and let $U_M: X\ots M \rightarrow M\otss Y$  be an $A-B$  correspondence isomorphism. Then the isomorphism class of the pair (\AMB , $U_M$), denoted by $[\pre AM_B , U_M]$, consists of pairs (\ANB, $U_N$) such that
\begin{itemize}
\item there exists an isomorphism  $\xi$: \AMB\ $\rightarrow$ \ANB;
\item $U_N: X\ots N \rightarrow N\otss Y$ is a \C-correspondence isomorphism; and 
\item the diagram
\[
\begin{tikzcd}
X\ots M \arrow{r}{1\ot \xi} \arrow[swap]{d}{U_M} & X\ots N \arrow{d}{U_N} \\
M\otss Y \arrow{r}\arrow{r}{\xi\ot 1_Y} & N\otss Y
\end{tikzcd}
\]
commutes.
\end{itemize}
\end{definition}

We now introduce our domain category:

\begin{theorem}

There exists a category \cat\ such that 
\begin{itemize}
\item objects are \C-correspondences;
\item morphisms \AXA $\rightarrow$ \BYB\ are isomorphism classes of the pairs \tn{(\AMB , $U_M$)} where $U_M$ denotes an $A-B$ correspondence isomorphism $X\ots M \rightarrow M\otss Y$, and \AMB\ is a regular correspondence satisfying $J_X\cdot M \subset M\cdot J_Y$;
\item the composition \tn{[\BNC, $U_N$]$\circ$[\AMB , $U_M$]} is given by the isomorphism class \[\tn{[$\pre A(M\otss N)_C$, $U_{M\otss N}$]} \] 
where $U_{M\otss N}$ denotes the isomorphism $(1_M\ot U_N)(U_M\ot 1_N)$;
\item the identity morphism on \AXA\ is \tn{[\AAA , $U_A$]}, where $U_A$ denotes the isomorphism $\ilx^{-1}\circ\irx: X\ots A \rightarrow A\ots X$.
\end{itemize}
\end{theorem}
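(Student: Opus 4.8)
The plan is to verify the three category axioms in order: (1) well-definedness of composition, (2) associativity, and (3) the identity laws; along the way one must also check that composition stays inside the class of admissible morphisms. First I would check that the composition of two admissible pairs is again admissible. Given regular correspondences \AMB\ and \BNC, the tensor product $\pre A(M\otss N)_C$ is again regular: regularity of a \C-correspondence is preserved under balanced tensor products (the left action is $\varphi_M(a)\ot 1_N$, which is injective because $\varphi_N$ is injective by \lemref{compacts}(2), and it lands in $\KK(M\otss N)$ by the second half of \lemref{compacts}(2) since $\varphi_N(B)\subset\KK(N)$). For the Katsura-ideal condition, regularity forces $J_M = B$ and $J_N = C$ and $J_{M\otss N} = A$ is automatic, so the containment $J_X\cdot(M\otss N)\subset(M\otss N)\cdot J_Z$ needs $J_X\cdot M\otss N \subset M\cdot J_Y\otss N \subset M\otss J_Y\cdot N\subset M\otss N\cdot J_Z$, using the balancing identity together with $J_X\cdot M\subset M\cdot J_Y$ and $J_Y\cdot N\subset N\cdot J_Z$. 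Finally $U_{M\otss N}=(1_M\ot U_N)(U_M\ot 1_N)$ is a composite of \C-correspondence isomorphisms $X\ots M\otss N \to M\otss Y\otss N \to M\otss N\otss Z$, hence an isomorphism, and one must check (routinely, using \propref{pre} to identify the relevant iterated tensor products) that $U_M\ot 1_N$ and $1_M\ot U_N$ are well-defined on the balanced tensor products.

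Next I would check that composition is well-defined on isomorphism classes: if $(\pre AM'_B,U_{M'})\in[\pre AM_B,U_M]$ via $\xi$ and $(\pre BN'_C,U_{N'})\in[\pre BN_C,U_N]$ via $\eta$, then $\xi\otss\eta$ is an isomorphism $\pre A(M\otss N)_C\to\pre A(M'\otss N')_C$, and one must verify that the defining square for $[\pre A(M\otss N)_C, U_{M\otss N}]$ commutes with $\xi\otss\eta$ in place of the abstract isomorphism. This amounts to pasting the two commuting squares witnessing $\xi$ and $\eta$ after tensoring them with identity morphisms, plus a naturality/interchange check that $(\xi\ot 1)(1\ot\eta)=(1\ot\eta)(\xi\ot 1)$ on the appropriate iterated tensor product and that the flip-type isomorphisms $U_M\ot 1$, $1\ot U_N$ are natural in $M$ and $N$; this is the most bookkeeping-heavy part but is otherwise formal.

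For associativity, given a third admissible pair $(\pre CK_D,U_K)$, both $([K,U_K]\circ[N,U_N])\circ[M,U_M]$ and $[K,U_K]\circ([N,U_N]\circ[M,U_M])$ have underlying correspondence $\pre A(M\otss N\otss K)_D$ up to the canonical associativity isomorphism of the balanced tensor product, so it suffices to check that the two resulting isomorphisms $X\ots(M\otss N\otss K)\to(M\otss N\otss K)\otss W$ agree after conjugating by that associativity isomorphism. Expanding both as triple composites of the form $(\cdots)(1\ot 1\ot U_K)(1\ot U_N\ot 1)(U_M\ot 1\ot 1)$ and using the coherence (pentagon-type) identities for the associator together with the interchange law for $\ot$, the two expressions coincide. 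For the identity laws, I would observe that $X\ots A\cong X$ and $A\otss X\cong X$ canonically via $\iota_{r,x}$ and $\iota_{l,x}$, so $\pre A(A\otss M)_B\cong\pre AM_B\cong\pre A(M\otss B)_B$, identity correspondences are regular, and the Katsura condition holds trivially; then a direct diagram chase shows $[\pre AA_A,U_A]$ is a two-sided unit for $\circ$, the key point being that $U_A=\iota_{l,x}^{-1}\circ\iota_{r,x}$ was designed precisely so that the unit square commutes.

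The main obstacle I anticipate is \emph{not} any single computation but the well-definedness of composition on isomorphism classes (the second paragraph): this is exactly where the earlier version's gap occurred, as the introduction notes, so the proof must be scrupulous about the naturality of the "flip" isomorphisms $U_M\ot 1_N$, $1_M\ot U_N$ and about the fact that the square defining $[\pre A(M\otss N)_C,U_{M\otss N}]$ is stable under $\xi\otss\eta$. Everything else — preservation of regularity and of the Katsura condition under $\otss$, associativity, and the unit laws — reduces to coherence of the balanced tensor product and the algebra of \C-correspondence isomorphisms, which I would dispatch by repeated appeal to \propref{pre} and \lemref{compacts} without writing out elementary-tensor computations in full.
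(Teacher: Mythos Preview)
Your proposal is correct and in fact more complete than the paper's own proof. The paper handles admissibility of the composite in one sentence (``it is not difficult to verify''), proves associativity by an explicit four-line expansion of both bracketings (exactly the computation underlying your coherence argument), and verifies only the right unit law $[\pre AN_B,U_N]\circ[\pre AA_A,U_A]=[\pre AN_B,U_N]$ via an elementary-tensor diagram chase. It does \emph{not} explicitly check that composition is well-defined on isomorphism classes, so your second paragraph goes beyond what the paper records; your argument there via the interchange law $(\xi\ot 1)(1\ot\eta)=(1\ot\eta)(\xi\ot 1)$ and naturality is the right one.

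One small correction of context: the gap in the earlier version that the introduction refers to was in the \emph{functor} $\fun$ (the analogue of \theoremref{main}), not in the category axioms for \cat. The modification of the morphisms---adding the condition $J_X\cdot M\subset M\cdot J_Y$---was made so that the \ec-covariant representation in \propref{rep} would be covariant, which is what the functor needs; it was not introduced to rescue well-definedness of composition in \cat. So while your caution in that paragraph is mathematically well placed for a rigorous proof, it is not ``exactly where the earlier version's gap occurred.''
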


\begin{proof} Let [\AMB , $U_M$] $\in$ Mor(\AXA , \BYB) and [\BNC, $U_N$] $\in$ Mor(\BYB , \CZC). Then, it is not difficult to verify that [$\pre A(M\otss N)_C$, $(1_N\ot U_M)(U_M\ot 1_N)$] is in Mor(\AXA , \CZC). Now, let [\CKD, $U_K$] $\in$ Mor(\CZC , \DRD). The composition is associative:
\begin{align*}
&\big([\pre CK_D , U_K] \circ [\pre BN_C, U_N]\big)\circ [\pre AM_B, U_M]\\
&\hspace{1cm}=\big[\pre B(N\otsc K)_D, (1_N\ot U_K)(U_N\ot 1_K)\big]\circ \big[\pre AM_B, U_M\big]\\
&\hspace{1cm}=\left[\pre A(M\otss(N\otsc K))_D, \big(1_M\ot (1_N \ot U_K)(U_N \ot 1_K)\big)\big(U_M\ot 1_{N\otsc K}\big)\right]\\
&\hspace{1cm}=\left[\pre A((M\otss N)\otsc K)_D, \big(1_{M\otss N}\ot U_K\big)\big((1_M\ot U_N)(U_M\ot 1_N)\ot 1_K\big)\right]\\
&\hspace{1cm}= [\pre CK_D , U_K] \circ \big([\pre BN_C, U_N]\circ [\pre AM_B, U_M]\big).
\end{align*}

It remains to prove that [\AAA , $U_A$] $\in$ Mor(\AXA , \AXA) is the identity morphism on \AXA . Let [\ANB , $U_N$] $\in$ Mor(\AXA , \BYB). We show that the following diagram commutes.
\[
\begin{tikzcd}
X\ots A\ots N \arrow{r}{1_X \ot \iota_{l,n}} \arrow[swap]{d}{(1_A \ot U_N)(U_A\ot 1_N)} & X\ots N \arrow{d}{U_N} \\
A\ots N \otss Y \arrow{r}\arrow{r}{\iota_{l,n} \ot 1_Y} & N\otss Y
\end{tikzcd}
\]
Let $x\in X$, $a\in A,$ and $n\in N$. On one hand we have 
\begin{align*}
(\iota_{l,n} \ot 1_Y)(1_A\ot U_N)(\ilx^{-1} \ot 1_N)(a \cdot x \ots n)&=(\iota_{l,n} \ot 1_Y)\left(a\ots U_N (x\ots n)\right)\\
&=a\cdot U_N (x\ots n)\\
&= U_N (a\cdot x\ots n).
\end{align*}
By linearity and density the equalities above hold for any element of $X\ots N$, i.e,  
\begin{equation}\label{1}
(i_{l,n}\ot 1_Y)(1_A\ot U_N)(\ilx^{-1}\ot 1_N)=U_N.
\end{equation}
 On the other hand, 
\begin{align*}
U_N(1_X\ot \iota_{l,n})(x\ots a \ots n)=U_N(x\ots a\cdot n)&=U_N(x\cdot a \ots n)\\
&=U_N(\irx\ot 1_N)(x\ots a\ots n).
\end{align*}
Again by linearity and density, we may conclude that
\begin{equation}\label{2}
U_N(1_X\ot \iota_{l,n})=U_N(\irx\ot 1_N)
\end{equation}
Now, combining  equations \ref{1} and \ref{2} we get 
\begin{align*}
U_N(1_X\ot \iota_{l,n})&= U_N(\irx\ot 1_N)\\
&= (\iota_{l,n} \ot 1_Y)(1_A\ot U_N)(\ilx^{-1}\ot 1_N)(\irx \ot 1_N)\\
&= (\iota_{l, n}\ot 1_Y)(1_A\ot U_N)(U_A\ot 1_N),
\end{align*}
which implies $[\pre AN_B]\circ [\pre AA_A]=\big[\pre A(A\ots N)_B, (1_A\ot U_N)(U_A\ot 1_N)\big] = [\pre AN_B , U_N].$
\end{proof}

\begin{proposition}\label{catiso} A morphism \tn{[\AMB, $U_M$]}: \AXA $\rightarrow$ \BYB\ in \cat\ is an  isomorphism if and only if \AMB\ is an \ibm . 
\end{proposition}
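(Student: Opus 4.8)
The plan is to establish the two implications separately, leveraging the analogous statement in the enchilada category recalled in the text (a morphism $[\pre AX_B]$ is an isomorphism there iff $X$ is an imprimitivity bimodule, \cite[Lemma 2.4]{enchilada}). For the easy direction, suppose $[\pre AM_B, U_M]$ is an isomorphism in \cat, with inverse $[\pre BN_A, U_N]$. Then by the definition of composition, $[\pre A(M\otss N)_A, U_{M\otss N}] = [\pre AA_A, U_A]$, which in particular forces an isomorphism of $A-A$ correspondences $M\otss N \cong A$; symmetrically $N\ots M \cong B$. These are exactly the conditions making \AMB\ an imprimitivity bimodule (equivalently, this says $[M]$ is invertible in the enchilada category, hence \AMB\ is an \ibm\ by \cite[Lemma 2.4]{enchilada}).

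For the converse, assume \AMB\ is an \ibm. First I would check that \AMB\ being a regular correspondence satisfying $J_X\cdot M\subset M\cdot J_Y$ is compatible with the dual $\pre B\tilde M_A$ satisfying the corresponding constraint $J_Y\cdot\tilde M\subset \tilde M\cdot J_X$ — this uses that for an \ibm\ the left and right actions are by compact operators and are injective (so both $M$ and $\tilde M$ are automatically regular), and that the condition $J_X\cdot M\subset M\cdot J_Y$ dualizes under $m\mapsto\tilde m$ to the required inclusion. Then I would produce the candidate inverse morphism $[\pre B\tilde M_A, U_{\tilde M}]$, where $U_{\tilde M}: Y\ots\tilde M\to\tilde M\otss X$ is the isomorphism forced by $U_M$: concretely, using \lemref{ibm} and the isomorphisms $m_A, m_B$ together with $U_M$, one builds $U_{\tilde M}$ as the composite
\[
Y\ots\tilde M \xrightarrow{\ \cong\ } \tilde M\otss M\otss Y\ots\tilde M \xrightarrow{1\ot U_M^{-1}\ot 1} \tilde M\otss X\ots M\ots\tilde M \xrightarrow{\ \cong\ } \tilde M\otss X,
\]
the outer isomorphisms coming from $m_B^{-1}$ (inserting $\tilde M\otss M\cong B$ on the left) and $m_A$ (collapsing $M\ots\tilde M\cong A$ on the right), suitably bracketed.

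The main work is then verifying that $[\pre BM_A\tilde{}\,, U_{\tilde M}]$ is a genuine two-sided inverse: one must show the composites $[\pre A(M\otss\tilde M)_A, U_{M\otss\tilde M}]$ and $[\pre B(\tilde M\ots M)_B, U_{\tilde M\ots M}]$ equal the identity morphisms $[\pre AA_A, U_A]$ and $[\pre BB_B, U_B]$ respectively. The underlying bimodule isomorphisms are exactly $m_A$ and $m_B$ from \lemref{ibm}; the content is checking that $m_A$ (resp. $m_B$) intertwines $U_{M\otss\tilde M}$ with $U_A$ (resp. $U_{\tilde M\ots M}$ with $U_B$) in the sense of the diagram in the definition of isomorphism class of pairs. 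This is where I expect the main obstacle: it is a diagram chase on elementary tensors through several associativity and balancing identifications, and the key compatibility is precisely the relation $m_A(x\otss\tilde y)\cdot z = x\cdot m_B(\tilde y\ots x)$ from \lemref{ibm}, which is what makes the two ways of untangling $X\ots M\ots\tilde M$ versus $M\ots\tilde M\ots X$ agree after applying $U_M$. I would reduce the verification to tracking a single elementary tensor $x\ots m\ots\tilde n$ and its image, and invoke linearity, density, and \propref{pre} to extend. Once both composites are identified with the identity morphisms, the proof is complete.
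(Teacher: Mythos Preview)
Your overall strategy matches the paper's almost exactly: the paper also disposes of the forward direction in one line, then for the converse it constructs the candidate inverse $U_N$ on the dual $N=\tilde M$ via the same composite you wrote (this is the content of \lemref{inverse}), and finishes by checking that $m_A$ intertwines $(1_M\ot U_N)(U_M\ot 1_N)$ with $U_A$, exactly the diagram you describe. So the architecture of your proposal is right and agrees with the paper.

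There is, however, one genuine slip. You assert that ``the condition $J_X\cdot M\subset M\cdot J_Y$ dualizes under $m\mapsto\tilde m$ to the required inclusion'' $J_Y\cdot\tilde M\subset\tilde M\cdot J_X$. It does not: since $\widetilde{a\cdot m}=\tilde m\cdot a^*$ and $\widetilde{m\cdot b}=b^*\cdot\tilde m$, the inclusion $J_X\cdot M\subset M\cdot J_Y$ dualizes to $\tilde M\cdot J_X\subset J_Y\cdot\tilde M$, which is the reverse of what you need for $[\tilde M,U_{\tilde M}]$ to be a morphism in \cat. To get the correct direction you must use the intertwiner $U_M$ (not just the bimodule $M$) to show that in fact $J_X\cdot M=M\cdot J_Y$; this is a Rieffel-correspondence argument showing that the Katsura ideals match under the Morita equivalence carried by $M$ and $U_M$. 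The paper, incidentally, glosses over this verification entirely, so you are right to flag that something needs checking here --- but the check is not the naive dualization you propose.
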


To prove Proposition \ref{catiso} we first need the following lemma.

\begin{lemma}\label{inverse} Let   \AMB\  be an \ibm\ given with an $A-B$ correspondence isomorphism $U_M: X\ots M \rightarrow M\otss Y$. Let \BNA\ be the dual of \AMB . Then, there exists a $B-A$ correspondence isomorphism $U_N: Y\otss N \rightarrow N\ots X$ such that 
\[(\ilx\ot 1_{M})(\ma\ot 1_{X\ots M})(1_{M\otss N}\ot U_M^{-1}) = U_M^{-1}(1_M\ot \ily)(1_M\ot \mb\ot 1_Y)\]
as operators on $M\otss N\ots M\otss Y$, where $m_A: M\otss N\rightarrow A$ and $m_B: N\ots M \rightarrow B$ are the isomorphisms defined in Lemma \ref{ibm}. 
\end{lemma}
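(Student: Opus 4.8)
The plan is to produce $U_N$ explicitly as a composite of canonical isomorphisms built from $U_M^{-1}$ and the imprimitivity maps $\ma,\mb$ of \lemref{ibm}, and then to derive the displayed identity from a short computation on elementary tensors whose only real ingredient is the compatibility relation of \lemref{ibm}. Note at the outset that the displayed equality does not involve $U_N$ at all: it is a statement about $U_M$, $\ma$, $\mb$ and the canonical maps alone, so the two assertions of the lemma are essentially independent, and the existence of $U_N$ is the soft half.

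\emph{Construction.} Since $\ma\colon M\otss N\to A$ and $\mb\colon N\ots M\to B$ are $C^*$-correspondence isomorphisms (\lemref{ibm}), so that in particular $\mb^{-1}\colon B\to N\ots M$, I would put
\[
U_N\;:=\;(1_N\ot\irx)(1_{N\ots X}\ot\ma)(1_N\ot U_M^{-1}\ot 1_N)(\mb^{-1}\ot 1_{Y\otss N})(\ily^{-1}\ot 1_N),
\]
which, read from right to left, is the chain
\[
\begin{aligned}
Y\otss N&\xrightarrow{\ \ily^{-1}\ot 1\ }B\otss Y\otss N\xrightarrow{\ \mb^{-1}\ot 1\ }N\ots M\otss Y\otss N\\
&\xrightarrow{\ 1\ot U_M^{-1}\ot 1\ }N\ots X\ots M\otss N\xrightarrow{\ 1\ot\ma\ }N\ots X\ots A\xrightarrow{\ 1\ot\irx\ }N\ots X.
\end{aligned}
\]
Every arrow is a correspondence isomorphism between the balanced tensor products shown — those involving $\ma$ and $\mb$ by \lemref{ibm}, the others because tensoring a correspondence isomorphism with an identity map again yields one — so $U_N$ is a $B$–$A$ correspondence isomorphism $Y\otss N\to N\ots X$, as required.

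\emph{The identity.} Both sides of the displayed equality are bounded $A$–$B$ correspondence homomorphisms $M\otss N\ots M\otss Y\to X\ots M$ (modulo the canonical associativity isomorphisms, which the statement suppresses), so it suffices to check agreement on elementary tensors $m_1\otss\widetilde{m_2}\ots m_3\otss y$ with $m_1,m_2,m_3\in M$, $y\in Y$; here one keeps in mind that $U_M^{-1}(m_3\otss y)$ is merely a norm-limit of finite sums in $X\ots M$, which is harmless since every step below uses only the left $A$-action on $X\ots M$. Writing $a:=\ma(m_1\otss\widetilde{m_2})=\pre A\<m_1,m_2\>$ and $b:=\mb(\widetilde{m_2}\ots m_3)=\<m_2,m_3\>_B$: on the left-hand side, $1\ot U_M^{-1}$ gives $m_1\otss\widetilde{m_2}\ots U_M^{-1}(m_3\otss y)$, then $\ma\ot 1$ turns $m_1\otss\widetilde{m_2}$ into $a$, and $\ilx\ot 1$ applies $\varphi_X(a)$ to the $X$-leg, producing $(\varphi_X(a)\ot 1_M)\,U_M^{-1}(m_3\otss y)$. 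On the right-hand side, $1_M\ot\mb\ot 1_Y$ gives $m_1\otss b\otss y$, then $1_M\ot\ily$ followed by the balancing relation $m_1\otss\varphi_Y(b)y=m_1\cdot b\otss y$ yields $U_M^{-1}\bigl((m_1\cdot b)\otss y\bigr)$; the Hilbert-bimodule compatibility $\pre A\<m_1,m_2\>m_3=m_1\<m_2,m_3\>_B$ (equivalently the last display of \lemref{ibm}) rewrites $m_1\cdot b=m_1\cdot\mb(\widetilde{m_2}\ots m_3)=\ma(m_1\otss\widetilde{m_2})\cdot m_3=a\cdot m_3$, so this is $U_M^{-1}\bigl(\varphi_{M\otss Y}(a)(m_3\otss y)\bigr)$, and finally, because $U_M^{-1}$ intertwines the left $A$-actions while $\varphi_{X\ots M}(a)=\varphi_X(a)\ot 1_M$, it equals $(\varphi_X(a)\ot 1_M)\,U_M^{-1}(m_3\otss y)$ too. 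Linearity and continuity then give the identity.

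I do not expect a conceptual obstacle: once the domains and codomains of all five tensor factors are fixed, the proof is a diagram chase whose only substantive input is the compatibility relation of \lemref{ibm}. The main practical difficulty is bookkeeping — keeping straight which associativity and balancing identifications are in force in the fourfold and fivefold balanced tensor products, and organizing the elementary-tensor computation so that it is not obstructed by $U_M^{-1}(m_3\otss y)$ failing to be an elementary tensor. It is worth flagging that this identity is the computational engine behind \propref{catiso}: feeding the formula for $U_N$ above into $U_{M\otss N}=(1_M\ot U_N)(U_M\ot 1_N)$ and applying it should show that $\ma$ intertwines $U_{M\otss N}$ with $U_A$, i.e.\ that $\bigl[\pre A(M\otss N)_A,\,U_{M\otss N}\bigr]$ equals the identity morphism $\bigl[\pre AA_A,\,U_A\bigr]$ on \AXA .
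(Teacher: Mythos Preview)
Your proof is correct and matches the paper's argument essentially line for line: the construction of $U_N$ as the fivefold composite is identical, and the verification of the displayed identity proceeds the same way---on elementary tensors, using only that $U_M^{-1}$ intertwines the left $A$-actions together with the compatibility relation $\pre A\<m_1,m_2\>m_3=m_1\<m_2,m_3\>_B$ from \lemref{ibm}. Your additional remarks (that the identity is independent of $U_N$, and that one must be mindful $U_M^{-1}(m_3\otss y)$ need not be an elementary tensor) are accurate and, if anything, make the exposition slightly more careful than the original.
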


\begin{proof} Define a $B-A$ correspondence isomorphism $U_N: Y\otss N \rightarrow N\ots X$ as follows:
\[
\begin{tikzcd}[row sep=huge]
Y\otss N \arrow{r}{\ily^{-1}\ot 1_N} & B\otss Y\otss N \arrow{r}{\mb^{-1}\ot 1_Y\ot 1_N} &N\ots M\otss Y\otss N\arrow{dl}[swap]{1_N\ot U_M^{-1}\ot 1_N}\\
&N\ots X\ots M \otss N \arrow{r}{1_N \ot 1_X \ot \ma} & N\ots X\ots A \arrow{r}{1_N\ot\irx} & N\ots X,
\end{tikzcd}
\]
i.e,
\[ U_N := (1_N\ot \irx)(1_{N\ots X}\ot m_A)(1_N\ot U_M^{-1}\ot 1_N)(m_B^{-1}\ot 1_{Y\otss N})(\ily^{-1}\ot1_N). \]

Now, notice that by linearity and density, it suffices to prove the required equality for the elements of form $\xi\otss n\ots \mu \otss y$, where $\xi,\mu\in M, n\in N,$ and $y\in Y$:
\begin{align*}
&(\ilx\ot 1_M)(\ma \ot 1_{X\ots M})(1_{M\ots N} \ot U_M^{-1})(\xi\otss n\ots \mu \otss y) \\
&\hspace{1cm} = (\ilx\ot 1_M)\left[ m_A(\xi\otss n)\ots U_M^{-1}(\mu\otss y)\right]\\
&\hspace{1cm} = m_A(\xi\otss n)\cdot U_M^{-1}(\mu\otss y)\\
&\hspace{1cm} =  U_M^{-1}\left[m_A(\xi\otss n)\cdot \mu\otss y \right] & \righttext{($U_M^{-1}$ preserves the left action)}\\
&\hspace{1cm} = U_M^{-1}\left[ \xi\cdot m_B(n\ots\mu)\otss y \right] & \righttext{(Lemma \ref{ibm})}\\
&\hspace{1cm} = U_M^{-1}\left[\xi \otss m_B(n\ots\mu)\cdot y \right]\\
&\hspace{1cm} = U_M^{-1}(1_M\ot \ily)(1_M\ot \mb\ot 1_Y)(\xi\otss n\ots \mu \otss y),
\end{align*}
which completes the proof. \end{proof}

\begin{proof} [Proof of Proposition~\ref{catiso}] It is not difficult to see that if [\AMB, $U_M$] is an isomorphism in \cat\  then \AMB\ must be invertible. For the other direction, let \BNA\ and $U_N$ be as in Lemma \ref{inverse}. We show that the diagram
\[
\begin{tikzcd}[row sep=large, column sep=large]
X\ots M\otss N \arrow{r}{1_X\ot \ma} \arrow {d}[swap]{(1_M\ot U_N)(U_M\ot 1_N)}&X\ots A\arrow{d}{U_A} \\
M\otss N\ots X \arrow{r}{\ma\ot1_X} & A\ots X 
\end{tikzcd}
\]
commutes. This will allow us to conclude that $[M\otss N,  (1_M\ot U_N)(U_M\ot 1_N)]=[\pre AA_A, U_A]$, i.e, [\BNA, $U_N$] is a right inverse for [\AMB, $U_M$].

First, observe that the definition of $U_N$ and Lemma \ref{inverse} together gives us the equality
\[(1_M\ot U_N)(U_M\ot 1_N)= (1_M\ot 1_N\ot\irx)(\ma^{-1}\ot 1_X \ot \ma)(\ilx^{-1}\ot 1_M\ot 1_N).\]	
Now, let $x=a\cdot x' \in X$, where $a\in A$, $x'\in X$, and let  $\xi\in M\otss N.$ Then we have 
\begin{align*}
&(\ma\ot 1_X)(1_M\ot 1_N\ot\irx)(\ma^{-1}\ot 1_X \ot \ma)(\ilx^{-1}\ot 1_M\ot 1_N)(x\ots\xi)\\
&=(\ma\ot 1_X)(1_M\ot 1_N\ot\irx)(\ma^{-1}\ot 1_X \ot \ma)(a\ots x'\ots\xi)\\
&=(\ma\ot 1_X)(1_M\ot 1_N\ot\irx)(\ma^{-1}(a)\ots x' \ots \ma(\xi))\\
&=a\ots x'\cdot \ma(\xi).
\end{align*}
On the other hand, we have
\begin{align*}
U_A(1_X\ot \ma)(x\ots \xi)&= \ilx^{-1}\circ\irx(x\ots\ma(\xi))\\
&= \ilx^{-1}(x\cdot\ma(\xi))\\
&=\ilx^{-1}(x)\cdot\ma(\xi)\\
&=a\ots x'\cdot\ma(\xi).
\end{align*}

We have shown that $(\ma \ot 1_X)(1_M\ot U_N)(U_M\ot 1_N) = U_A (1_X\ot \ma)$, as desired. One can use the same technique to show that [\BNA, $U_N$] is also a left inverse for [\AMB, $U_M$].  \end{proof}

\section{A Covariant Representation}

In this section we define an injective covariant representation of a  \C-correspondence \AXA\ given with a morphism [\AMB, $U_M$]: \AXA\ $\rightarrow$ \BYB\ in \cat . We prove that this representation in fact admits a gauge action. Since we use it frequently, we would like to remind the reader that any given Hilbert module isomorphism $U: X_A \rightarrow Y_A$ gives rise to an isomorphism $\ad U: \LL(X) \rightarrow \LL(Y)$ such that $\ad U(T)=UTU^{-1}$ for any $T\in\LL(X). $

Let \AXA , \AMB\ be given, where the latter is a regular \C-correspondence. Consider the linear map 
\[T: X\rightarrow \LL(M, X\ots M), \midtext{} T(x)(m):= x\ots m,\] 
where $x\in X,$ $m\in M$. Then we have
\[\<T(x)m, y\ots m'\>_B = \<x\ots m, y\ots m'\>_B = \<m, \varphi_M(\<x,y\>_A)m' \>_B,\]
for $x,y\in X$ and $m,m'\in M$.  This means the adjoint $T(x)^*$ satisfies \[T(x)^*(y\ots m)=T(x)^*T(y)m= \varphi_M(\<x,y\>_A)m,\] for any elementary tensor $y\ots m \in (X\ots M)$. 

On the other hand, we know by regularity that the homomorphism $\varphi_M: A\rightarrow \LL(M)$  is injective and  $\varphi_M(a)\in\KK(M)$ for any $a\in A.$ This allows us to observe that
\[ T(x)\in \KK(M, X\ots M) \iff T(x)^*T(x)\in \KK(M) \iff \varphi_M(\<x,x\>_A)\in \KK(M), \]
which implies  $T(x)\in \KK(M, X\ots M)$, for any $x\in X.$

\begin{lemma}\label{isos} Let $(\Upsilon, t)$ be the universal covariant representation of \AXA . Then we have the following.
\begin{enumerate}[\normalfont(i)]
\item Consider the subspace \[\overline{t(X)\OX}:=\clspn\{t(x)S: \hspace{.2cm} x\in X, S\in\OX\}\]
of $\OX$. The map $X\ots \OX \rightarrow \overline{t(X)\OX}$ determined on elementary tensors by \[x\otimes S \rightarrow t(x)S\] is an $A-\OX$ correspondence isomorphism. 
\item $J_X\cdot \OX \subset \overline{t(X)\OX} $. 

\item When \AXA\ is regular, the map defined in \tn{(i)} gives an isomorphism \[\pre A(X\ots O_X)_{\OX}\rightarrow \pre A(\OX)_{\OX}.\]

\end{enumerate}
\end{lemma}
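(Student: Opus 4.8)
The plan is to obtain (i) directly from \propref{pre}, and then read off (ii) and (iii) from it.

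For (i), recall that $X\ots\OX$ is by construction the Hausdorff completion of the pre-correspondence $X\odot\OX$, whose left $A$-action and $\OX$-valued semi-inner product are given on elementary tensors by $a\cdot(x\ot S)=(\varphi_X(a)x)\ot S$ and $\<x\ot S,\,y\ot R\>=S^{*}\Upsilon(\<x,y\>_A)R$. First I would check that $Z:=\overline{t(X)\OX}$, regarded as a closed subspace of the identity correspondence on $\OX$, is itself an $A$-$\OX$ correspondence: it is a right $\OX$-submodule of $\OX$, and the relation $\Upsilon(a)t(x)=t(\varphi_X(a)x)$ shows that both $\Upsilon(a)$ and $\Upsilon(a)^{*}=\Upsilon(a^{*})$ carry $t(X)\OX$ into itself, so left multiplication by $\Upsilon(a)$ restricts to an adjointable operator on $Z$, giving the left action $\varphi_Z$. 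Then I would apply \propref{pre} to the map $\Phi\colon X\odot\OX\to Z$ determined by $x\ot S\mapsto t(x)S$; its two hypotheses are immediate from the representation relations, since $\Phi\big(a\cdot(x\ot S)\big)=t(\varphi_X(a)x)S=\varphi_Z(a)\,\Phi(x\ot S)$ and $\<\Phi(x\ot S),\Phi(y\ot R)\>_{\OX}=S^{*}t(x)^{*}t(y)R=S^{*}\Upsilon(\<x,y\>_A)R=\<x\ot S,\,y\ot R\>$. Hence $\Phi$ extends to an injective $A$-$\OX$ correspondence homomorphism $\widetilde\Phi\colon X\ots\OX\to Z$; being inner-product preserving it is isometric, so its range is closed, and as it contains the dense set $\{t(x)S:x\in X,\,S\in\OX\}$ it equals $Z$. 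Thus $\widetilde\Phi$ is the desired isomorphism.

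For (ii), let $a\in J_X$. By definition of the Katsura ideal $\varphi_X(a)\in\KK(X)$, and covariance of $(\Upsilon,t)$ gives $\Upsilon(a)=\Psi_t(\varphi_X(a))$. Approximating $\varphi_X(a)$ in $\KK(X)$ by finite sums of operators $\theta_{x,y}$ and using continuity of $\Psi_t$ together with $\Psi_t(\theta_{x,y})=t(x)t(y)^{*}$, we get $\Upsilon(a)\in\clspn\{t(x)t(y)^{*}:x,y\in X\}$; multiplying on the right by $S\in\OX$ puts $\Upsilon(a)S$ in the closed set $\overline{t(X)\OX}$, whence $J_X\cdot\OX\subset\overline{t(X)\OX}$.

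For (iii), regularity forces $J_X=A$, so (ii) gives $A\cdot\OX\subset\overline{t(X)\OX}$, and it remains only to show $\overline{A\cdot\OX}=\OX$, for then the range $Z$ of the isomorphism of (i) is all of $\OX$. Here I would use an approximate unit $(u_\lambda)$ of $A$ together with $\OX=\clspn\{t^n(x_n)t^m(y_m)^{*}\}$: for $n\geq 1$ the standing non-degeneracy of \AXA\ propagates to non-degeneracy of $X^{\otimes n}$ as a left $A$-module, so $\varphi_n(u_\lambda)x_n\to x_n$ and hence $\Upsilon(u_\lambda)t^n(x_n)=t^n(\varphi_n(u_\lambda)x_n)\to t^n(x_n)$, while for $n=0$ clearly $\Upsilon(u_\lambda)\Upsilon(x_0)=\Upsilon(u_\lambda x_0)\to\Upsilon(x_0)$; since $\norm{\Upsilon(u_\lambda)}\leq 1$, this forces $\Upsilon(u_\lambda)S\to S$ for every $S\in\OX$, so $S\in\overline{\Upsilon(A)\OX}=\overline{A\cdot\OX}$. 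I expect the only step with genuine content to be this last density claim, and it is exactly where regularity ($J_X=A$) and the standing non-degeneracy hypothesis enter — which is why (iii) is stated only for regular correspondences; the rest is routine bookkeeping on top of \propref{pre} and covariance.
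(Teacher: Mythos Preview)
Your proof is correct and follows essentially the same approach as the paper: invoke \propref{pre} for (i), covariance for (ii), and $J_X=A$ for (iii). The only difference is that in (iii) the paper simply writes $\OX=A\cdot\OX$ (invoking the standing non-degeneracy hypothesis and Cohen--Hewitt), whereas you spell out this density via an approximate unit; this is extra detail rather than a different route.
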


\begin{proof}  Let $\Phi: X\odot \OX\rightarrow \overline{t(X)\OX}$ be the unique linear map such that $x\otimes S\mapsto t(x)S.$ It suffices to make our computations with elementary tensors. We first show that $\Phi$ preserves the semi-inner product. Let $x,y\in X$ and $S,T\in\OX$. Then,
\begin{align*}
\< x\ot S, y\ot T\>_{\OX} & = \< S, \Upsilon(\<x,y\>_A)T\>_{\OX}\\
& = \< S, t(x)^*t(y)T\>_{\OX} = S^*t(x)^*t(y)T \\
& = \<t(x)S, t(y)T\>_{\OX}. 
\end{align*}
For $a\in A$, the computation
\[\Phi(a\cdot (x\ot S))= \Phi(ax\ot S) = t(ax)S = \Upsilon(a)t(x)S = a\cdot \Phi(x\ot S)\] 
shows that $\Phi$ preserves the left action. It is clear that $\Phi$ is surjective,  and hence, $\Phi$ extends to a unique \C-correspondence isomorphism $X\ots \OX \rightarrow \overline{t(X)\OX}$. 

Part \tn{(ii)} follows from the fact that for any $a\in J_X$ we have  \[\Upsilon(a)= \Psi_t(\varphi_X(a))\in \Psi_t(\KK(X)) = \clspn\{t(x)t(y)^*: x,y\in X\}. \]

For the last part, let \AXA\ be a regular \C-correspondence. Then, since $A=J_X$, we have 
\[ \OX = A\cdot \OX = J_X\cdot \OX \subset \overline{t(X)\OX} ,\]
hence the map defined in \tn{(i)} is an isomorphism onto $\OX$. 
\end{proof}


\begin{proposition}\label{rep}
Let \tn{[\AMB, $U_M$]:} \AXA $\rightarrow$ \BYB\ be a morphism in \cat . Then \AXA\ has an injective covariant representation on $\KK(M\otss \OY)$. 
\end{proposition}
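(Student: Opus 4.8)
The plan is to build the representation directly out of the data we have, namely the isomorphism $U_M : X \ots M \to M \otss Y$ together with the universal covariant representation $(\Upsilon_Y, t_Y)$ of \BYB\ on $\OY$. First I would dispose of the homomorphism part: define $\pi : A \to \KK(M \otss \OY)$ by $\pi(a) := \varphi_M(a) \ot 1_{\OY}$. Since \AMB\ is regular, $\varphi_M(a) \in \KK(M)$ for all $a \in A$, and the left action $\varphi_{\OY}$ on $\OY$ is injective (indeed $\OY$ contains a copy of $B$ via $\Upsilon_Y$, which is injective because the universal covariant representation of a correspondence is injective), so by \lemref{compacts}(2) the map $T \mapsto T \ot 1_{\OY}$ is an isometric embedding of $\KK(M)$ into $\KK(M \otss \OY)$. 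Composing with $\varphi_M$ gives an injective $*$-homomorphism $\pi$, which also handles the ``injective'' clause of the statement.

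Next I would construct the linear map $t : X \to \KK(M \otss \OY)$. Recall the map $T : X \to \KK(M, X \ots M)$, $T(x)(m) = x \ots m$, constructed just before \lemref{isos}; regularity of $M$ was exactly what was used there to guarantee $T(x)$ lands in the compacts. Tensoring on the right with $\OY$ gives $T(x) \ot 1_{\OY} \in \KK(M \otss \OY,\, X \ots M \otss \OY)$. Now apply $U_M \ot 1_{\OY} : X \ots M \otss \OY \to M \otss Y \otss \OY$ on the left, and then the $B$-balanced isomorphism $1_M \ot \Theta$ where $\Theta : Y \otss \OY \to \overline{t_Y(Y)\OY} \subset \OY$ is the correspondence isomorphism from \lemref{isos}(i) applied to the correspondence \BYB; this lands us in $\KK(M \otss \OY,\, M \otss \overline{t_Y(Y)\OY}) \subset \KK(M \otss \OY)$. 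So I set
\[
t(x) := (1_M \ot \Theta)\,(U_M \ot 1_{\OY})\,(T(x) \ot 1_{\OY}) \in \KK(M \otss \OY).
\]
One then checks the two representation identities. The identity $t(x)^* t(y) = \pi(\la x, y\ra_A)$ follows because $U_M \ot 1_{\OY}$ and $1_M \ot \Theta$ are isometries, so $t(y)^*t(x) = (T(y) \ot 1_{\OY})^*(T(x)\ot 1_{\OY})$, and $T(y)^* T(x) = \varphi_M(\la y,x\ra_A)$ from the computation preceding \lemref{isos}; tensoring with $1_{\OY}$ gives $\pi(\la y, x \ra_A)$. The identity $\pi(a) t(x) = t(\varphi_X(a) x)$ follows because $T(\varphi_X(a)x) = (\varphi_X(a) \ot 1_M) T(x)$ on $M$ — i.e. $T$ intertwines the left $A$-actions — and $U_M$ preserves the left action ($U_M(\varphi_X(a)\ot 1_M) = (\varphi_M(a) \ot 1_Y) U_M$ since it is an $A$--$B$ correspondence isomorphism), while $1_M \ot \Theta$ commutes with $\varphi_M(a) \ot 1$; chasing these through gives $\pi(a) t(x) = (\varphi_M(a) \ot 1_{\OY}) t(x) = t(\varphi_X(a)x)$.

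The main obstacle is verifying covariance: $\pi(a) = \Psi_t(\varphi_X(a))$ for all $a \in J_X$. The strategy is to unwind $\Psi_t$ using the factorization of $t$. Since the three operators $T(\cdot) \ot 1_{\OY}$, $U_M \ot 1_{\OY}$, $1_M \ot \Theta$ in the definition of $t$ are (co)isometric intertwiners, for $k \in \KK(X)$ one gets $\Psi_t(k) = (1_M \ot \Theta)(U_M \ot 1_{\OY})\,(\hat k \ot 1_{\OY})\,(U_M \ot 1_{\OY})^*(1_M \ot \Theta)^*$ where $\hat k \in \KK(X \ots M)$ is the image of $k$ under the embedding $\KK(X) \hookrightarrow \KK(X \ots M)$, $k \mapsto k \ot 1_M$ (valid by \lemref{compacts}(2) since $\varphi_M$ is injective and $\varphi_M(A) \subset \KK(M)$ by regularity); concretely $\Psi_t(\theta_{x,y}) = t(x)t(y)^*$, and on elementary tensors $\theta_{x,y} \ot 1_M = \theta_{x \ots \cdot,\, y \ots \cdot}$ in the appropriate sense. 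Now for $a \in J_X$ we have $\varphi_X(a) \in \KK(X)$, so $\varphi_X(a) \ot 1_M \in \KK(X \ots M)$, and the point is that $\widehat{\varphi_X(a)} = \varphi_X(a) \ot 1_M$ acts on $X \ots M$ as $T$ pushes forward $\varphi_M(a)$: precisely, $(\varphi_X(a)\ot 1_M) T(x) = T(\varphi_X(a)x)$, i.e. $T$ conjugates $\varphi_M(a) \in \KK(M)$ into $\varphi_X(a) \ot 1_M \in \KK(X \ots M)$ in the sense that $T(x)^*(\varphi_X(a)\ot 1_M)T(y) = \varphi_M(\la x, \varphi_X(a) y\ra_A) = \varphi_M(a^{1/2}\cdots)$... more cleanly: $T \in \KK(M, X\ots M)$ with $T = \sum$-limit, and $T^*(\varphi_X(a) \ot 1_M) T = \varphi_M(a)$ when $a \in J_X$, because $\varphi_X(a) = \Psi_{\mathrm{id}}$-type identity holds after embedding. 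Then conjugating by $U_M \ot 1_{\OY}$ turns $\varphi_X(a) \ot 1_M \ot 1_{\OY}$ into $\varphi_M(a) \ot 1_Y \ot 1_{\OY}$ (as $U_M$ is an $A$--$B$ isomorphism, $\ad(U_M)(\varphi_X(a) \ot 1_M) = \varphi_M(a) \ot 1_Y$; here one needs $a \in J_X$ mainly to ensure $\varphi_X(a) \ot 1_M$ is compact so that the formula for $\Psi_t$ applies, and the hypothesis $J_X \cdot M \subset M \cdot J_Y$ from the definition of \cat\ is what guarantees $\varphi_M(a) \ot 1_Y$ is again in the right place relative to $J_Y$, though for this proposition regularity of $Y$ is not assumed so one argues more carefully). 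Finally $1_M \ot \Theta$ conjugates $\varphi_M(a) \ot 1_Y \ot 1_{\OY}$ to $\varphi_M(a) \ot \ad(\Theta)(\cdots) = \varphi_M(a) \ot 1_{\overline{t_Y(Y)\OY}}$, and since by \lemref{isos}(ii) applied to $Y$ we have $J_Y \cdot \OY \subset \overline{t_Y(Y)\OY}$ and hence — using $J_X \cdot M \subset M \cdot J_Y$ — the restriction of $\varphi_M(a) \ot 1_{\OY}$ already has range inside $M \otss \overline{t_Y(Y)\OY}$, we conclude $\Psi_t(\varphi_X(a)) = \varphi_M(a) \ot 1_{\OY} = \pi(a)$. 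Thus $(\pi, t)$ is an injective covariant representation of \AXA\ on $\KK(M \otss \OY)$.
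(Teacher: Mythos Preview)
Your construction of $\pi$ and $t$ is exactly the paper's (the paper writes $\Phi(x)=(1_M\ot V_Y)(T(x)\ot 1_{\OY})$ with $T(x)m=U_M(x\ots m)$, which is your $(1_M\ot\Theta)(U_M\ot 1_{\OY})(T(x)\ot 1_{\OY})$ after absorbing $U_M$ into $T$), and your verification of the representation identities is the same. The covariance argument also follows the same route: compute $\Psi_t(k)$ as a conjugate of $k\ot 1_M$, use that $\ad U_M(\varphi_X(a)\ot 1_M)=\varphi_M(a)\ot 1_Y$, and invoke $J_X\cdot M\subset M\cdot J_Y$ together with \lemref{isos}(ii) to control the range of $\pi(a)$.

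There is, however, a genuine gap at the very last step. What your conjugation argument actually proves is
\[
\Psi_t(\varphi_X(a))\big|_{N}=\pi(a)\big|_{N},\qquad N:=M\otss\overline{t_Y(Y)\OY},
\]
because $(1_M\ot\Theta)(1_M\ot\Theta)^*$ is only the identity on $N$, not on all of $M\otss\OY$. You then observe that $\pi(a)$ has range in $N$ (this is where $J_X\cdot M\subset M\cdot J_Y$ enters) and conclude $\Psi_t(\varphi_X(a))=\pi(a)$. But ``agree on $N$'' together with ``range in $N$'' does \emph{not} force equality on the whole module: take $H=\CC^2$, $N=\CC\times\{0\}$, $S=\big(\begin{smallmatrix}1&1\\0&0\end{smallmatrix}\big)$, $T=\big(\begin{smallmatrix}1&0\\0&0\end{smallmatrix}\big)$. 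In Hilbert modules the situation is worse, since $N$ need not be complemented.

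The paper closes this gap with a short $C^*$-identity trick you should add. From $\Psi_\Phi(\varphi_X(a))|_N=\pi(a)|_N$ (applied also to $a^*\in J_X$) and the fact that $\pi(a)$ has range in $N$, one gets $\Psi_\Phi(\varphi_X(a))^*\pi(a)=\pi(a^*a)$. Combined with the universal identity $\pi(a^*)\Psi_\Phi(k)=\Psi_\Phi(\varphi_X(a^*)k)$ this yields $\pi(a^*a)=\Psi_\Phi(\varphi_X(a^*a))$, and then
\[
\norm{\Psi_\Phi(\varphi_X(a))-\pi(a)}^2=\norm{\Psi_\Phi(\varphi_X(a^*a))-\Psi_\Phi(\varphi_X(a))^*\pi(a)-\pi(a^*)\Psi_\Phi(\varphi_X(a))+\pi(a^*a)}=0.
\]
Once you insert this step, your plan matches the paper's proof.
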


\begin{proof}
Denote the universal covariant representation of \BYB\ by  $(\Upsilon, t)$. We use the \C-correspondence isomorphisms 
\[ U_M:  X\ots M  \rightarrow M\otss Y \midtext{and} V_Y: Y\otss \OY \rightarrow \overline{t(Y)\OY} \]
to construct a linear map $\Phi: X\rightarrow \KK(M\otss \OY). $
 
For each $x\in X$, define an operator $T(x) : M\rightarrow M\otss Y$ by \begin{equation}\label{T}T(x)m = U_M(x\ots m).\end{equation} Since \AMB\ is regular, as discussed above Lemma \ref{isos}, we have \[T(x)\in \KK(M, M\otss Y).\] In addition, since the \C-correspondence $\pre B{(\OY)}_{\OY}$ is regular, we have  
\[ T(x)\ot 1_{\OY}\in \KK(M\otss \OY, M\otss Y\otss \OY) \midtext{} \text{(Lemma \ref{compacts})}.\]
And now, the operator
\[\Phi(x):=(1_M \ot V_Y)(T(x)\ot 1_{\OY})\in \KK(M\otss \OY, M\otss \overline{t(Y)\OY})\] 
can be viewed as a compact operator on the Hilbert $\OY$-module $M\otss \OY$ whose range is contained in the submodule $M\otss \overline{t(Y)\OY}.$  \footnote{as in the first item of Lemma \ref{compacts}}

Now, define a homomorphism $\pi: A\rightarrow \LL(M\otss \OY)$  by   \[\pi(a)= \varphi_M(a)\ot 1_{\OY},\]
for $a\in A$. Note that $\pi$ is injective and $\pi(A)\subset \KK(M\otss \OY)$, by Lemma \ref{compacts}.

We claim that $(\pi, \Phi)$ is a representation of \AXA . Let $x,x' \in X,$ and $a\in A$. Then we have
\begin{align*}
\Phi(x)^* \Phi(x') &= (T(x)^* \ot 1_{\OY})(1_M\ot V_Y)^*(1_M\ot V_Y) (T(x')\ot 1_{\OY}) \\
 &=  T(x)^*T(x')\otss 1_{\OY}\hspace{.6cm}\text{($1_M\ot V_Y$ is a unitary map)}\\
& = \varphi_M(\<x,x'\>_A)\otss 1_{\OY} \\
&= \pi(\<x,x'\>_A).
\end{align*}
It remains to show the equality $\pi(a)\Phi(x)=\Phi(\varphi_X(a)x)$. Observe that we have 
\[ (\varphi_M(a)\ot 1_{\OY})(1_M\ot V_Y)= (1_M \ot V_Y)(\varphi_M(a)\ot 1_Y\ot 1_{\OY}).\]
This allows us to make the following computation. 
\begin{align*}
\Phi(\varphi_X(a)x) &= (1_M\ot V_Y)(T(a\cdot x)\ot 1_{\OY}) \\
&=(1_M\otss V_Y)(\varphi_M(a)\ot 1_Y\ot 1_{\OY})(T(x)\ot 1_{\OY})\\
&= (\varphi_M(a)\ot 1_{\OY})(1_M\ot V_Y)(T(x)\ot 1_{\OY}) \\
& = (\varphi_M(a)\ot 1_{\OY})\Phi(x)\\
& = \pi(a)\Phi(x).
\end{align*}

We now prove that  the representation $(\pi, \Phi)$ is covariant. Let 
$ \Psi_{\Phi}: \KK(X)\rightarrow \KK(M\otss \OY)$ be the injective homomorphism associated to the representation $(\pi, \Phi)$.  Then, for $x, x' \in X$, we have the following equalities on $M\otss\overline{t(Y)\OY}.$ 
\begin{align*} 
\Psi_{\Phi}(\theta_{x,x'}) &= \Phi(x)\Phi(x')^* \\
&=(1_M\ot V_Y)(T(x)\otss 1_{\OY})(T(x')\ot 1_{\OY})^*(1_M\ot V_Y)^*\\
&=  (1_M\ot V_Y)(T(x)T(x')^* \otss 1_{\OY})(1_M\ot V_Y)^*\\
&= (1_M\ot V_Y)[\ad U_M(\theta_{x,x'}\ot 1_M)\ot 1_{\OY}](1_M\ot V_Y)^*.
\end{align*}
This implies that, for any $k\in\KK(X)$, we have
\[ \Psi_{\Phi}(k)\restriction{M\otss\overline{t(Y)\OY}}= (1_M\ot V_Y)[\ad U_M(k \ot 1_M)\ot 1_{\OY}](1_M\ot V_Y)^*. \] 
In particular,  for $a\in J_X$, we have
\begin{align*}
\Psi_{\Phi}(\varphi_X(a))\restriction{M\otss\overline{t(Y)\OY}} & = (1_M\ot V_Y)[\ad U_M(\varphi_X(a)\ot 1_M)\ot1_{\OY}](1_M\ot V_Y)^* \\
&=  (1_M\ot V_Y)(\varphi_M(a)\ot 1_Y \ot 1_{\OY})(1_M\ot V_Y)^*\\
&= \ad(1_M\ot V_Y)(\varphi_M(a)\ot 1_Y\ot 1_{\OY})\\
&= \varphi_M(a)\ot 1_{\OY}\restriction{M\otss\overline{t(Y)\OY}}\\
&=\pi(a)\restriction{M\otss\overline{t(Y)\OY}}.
\end{align*}
On the other hand, for $a\in J_X$, we know that the image of the operator $\pi(a) \in \KK(M\otss \OY)$ is contained in \[J_XM\otss \OY \subset MJ_Y\otss \OY = M\otss J_Y\OY \subset M\otss \overline{t(Y)\OY} .\] 
Coupling this with the fact that $\Psi_{\Phi}(\varphi_X(a))=\pi(a)$ on $M\otss \overline{t(Y)\OY}$ we get \[\pi(a)^*\pi(a) = \Psi_{\Phi}(\varphi_X(a))^*\pi(a) \in \KK(M\otss\OY).\] In other words, we have 
\[ \pi(a^*a)= \Psi_{\Phi}(\varphi_X(a))^*\pi(a) = \big( \pi(a^*)\Psi_{\Phi}(\varphi_X(a)) \big)^* = \Psi_{\Phi}(\varphi_X(a^*a)).\]
One can now show that 
\[
\norm{  \Psi_{\Phi}(\varphi_X(a))-\pi(a)}^2 = \norm{ (\Psi_{\Phi}(\varphi_X(a))-\pi(a))^*(\Psi_{\Phi}(\varphi_X(a))-\pi(a))}=0,\]
for any $a\in J_X$, which completes the proof.
\end{proof}

\begin{definition}\label{c}\normalfont Let  \tn{[\AMB, $U_M$]}: \AXA\ $\rightarrow$ \BYB\ be a morphism in \cat . Then, the injective covariant representation $(\pi, \Phi)$ of \AXA\ defined as in the proof of Proposition \ref{rep} is called  the \emph{C-covariant representation.} 

\end{definition}

Now, the universality of the Cuntz-Pimsner algebra $\OX$ gives us the following result.

\begin{corollary}

Let \tn{[\AMB, $U_M$]:} \AXA $\rightarrow$ \BYB\ be a morphism in \cat . Let $(\pi, \Phi)$ be the associated C-covariant representation of \AXA . Then, there exists a unique homomorphism $\sigma: \OX\rightarrow \KK(M\otss \OY)$ 
such that  \[\sigma(t_X(x))=\Phi(x) \midtext{and} \sigma(\Upsilon_X(a))=\pi(a), \] for $x\in X$, $a\in A$, where  $(\Upsilon_X, t_X)$ denotes the universal covariant representation of \AXA . Thus, the regular \C-correspondence $\pre A(M\otss \OY)_{\OY}$ can be viewed as an  $\OX-\OY$ correspondence via the homomorphism $\sigma$. 

\end{corollary}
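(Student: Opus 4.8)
The plan is to invoke the universal property of the Cuntz-Pimsner algebra $\OX$ directly. By Proposition~\ref{rep}, the pair $(\pi,\Phi)$ is an injective covariant representation of \AXA\ on the \C-algebra $\KK(M\otss\OY)$. Since $\OX$ is by definition the \C-algebra generated by the \emph{universal} covariant representation $(\Upsilon_X,t_X)$, its defining universal property asserts that for every covariant representation of \AXA\ on a \C-algebra $D$ there is a unique $*$-homomorphism from $\OX$ to $D$ intertwining the two representations. Applying this to $D=\KK(M\otss\OY)$ and the representation $(\pi,\Phi)$ yields the desired $\sigma:\OX\to\KK(M\otss\OY)$ with $\sigma\circ\Upsilon_X=\pi$ and $\sigma\circ t_X=\Phi$, and uniqueness is immediate because $\OX$ is generated by the images of $\Upsilon_X$ and $t_X$.

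Next I would explain how $\sigma$ turns $\pre A(M\otss\OY)_{\OY}$ into an $\OX-\OY$ correspondence. First one records that $\pre A(M\otss\OY)_{\OY}$ is indeed a \C-correspondence over $A$ on the right $\OY$-module $M\otss\OY$: the right $\OY$-action and the $\OY$-valued inner product come from the balanced tensor product construction of Section~2, and the left $A$-action $\varphi(a)=\varphi_M(a)\ot 1_{\OY}$ is the one appearing in Proposition~\ref{rep}, which is exactly the homomorphism $\pi$. Moreover $\pi$ is injective with $\pi(A)\subset\KK(M\otss\OY)$ (again by Lemma~\ref{compacts}, using regularity of \AMB\ and of $\pre B(\OY)_{\OY}$), so this correspondence is regular. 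Now, since $\sigma:\OX\to\KK(M\otss\OY)\subset\LL(M\otss\OY)$ is a $*$-homomorphism whose image contains $\pi(A)=\varphi(A)$ — in fact $\sigma(\Upsilon_X(a))=\pi(a)$ — composing the original left $A$-action with the canonical map $A\to\OX$, $a\mapsto\Upsilon_X(a)$, shows that the left $A$-module structure factors through $\sigma$. Concretely, defining $S\cdot\xi:=\sigma(S)\xi$ for $S\in\OX$ and $\xi\in M\otss\OY$ gives a left action of $\OX$ that restricts, via $A\to\OX$, to the original left action of $A$; hence $\pre A(M\otss\OY)_{\OY}$ becomes an $\OX-\OY$ correspondence.

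I do not expect a genuine obstacle here: the entire content has already been extracted in Proposition~\ref{rep} (constructing the representation and verifying injectivity and covariance). The remaining work is the formal bookkeeping of the universal property and of upgrading a left $A$-action that lands in $\sigma(\OX)$ to a left $\OX$-action. The only point requiring a word of care is that one should check the new $\OX$-action is by adjointable operators — which is automatic because $\sigma$ is a $*$-homomorphism into $\KK(M\otss\OY)\subset\LL(M\otss\OY)$ — and that it is non-degenerate in the sense that $\OX\cdot(M\otss\OY)$ is dense; this follows from $\sigma(\Upsilon_X(A))(M\otss\OY)=\pi(A)(M\otss\OY)=\varphi_M(A)M\otss\OY$ being dense in $M\otss\OY$ by non-degeneracy of \AMB . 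So the proof is essentially a one-line invocation of universality together with these short verifications.
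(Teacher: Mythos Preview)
Your proposal is correct and matches the paper's approach exactly: the paper gives no proof beyond the sentence ``Now, the universality of the Cuntz-Pimsner algebra $\OX$ gives us the following result,'' which is precisely the invocation of the universal property you spell out. Your additional verifications (adjointability, non-degeneracy, regularity) are accurate and simply make explicit what the paper leaves implicit.
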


\begin{lemma}\label{id} 
Let $(\Upsilon, t)$ be the universal covariant representation of \AXA , and let  $(\pi,\Phi)$ be the C-covariant representation of \AXA \ associated to the identity morphism $[\pre AA_A, U_A]$: \AXA $\rightarrow$ \AXA\ in \cat . Then, the $A-\OX$ correspondence isomorphism 
\[ U: A\ots\OX \rightarrow \OX, \righttext{ $a\ots S\mapsto \Upsilon(a)S$}\]
preserves the left $\OX$ module structure, i.e., 
\[ \pre {\OX}(A\ots \OX)_{\OX} \cong \pre {\OX}{\OX}_{\OX}.\]
\end{lemma}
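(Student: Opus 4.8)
The plan is to unwind the definition of the C-covariant representation $(\pi,\Phi)$ attached to the identity morphism $[\pre AA_A, U_A]$ and to check directly that the isomorphism $U\colon A\ots\OX\to\OX$ intertwines the left actions of $\OX$. Recall that the left action of $\OX$ on $A\ots\OX$ is induced by the homomorphism $\sigma\colon\OX\to\KK(A\ots\OX)$ coming from the universal property applied to $(\pi,\Phi)$, while the left action of $\OX$ on $\OX$ itself is by left multiplication. So the statement to verify is
\[
U\big(\sigma(S)(a\ots T)\big) = S\cdot U(a\ots T)\qquad\text{for all }S\in\OX,\ a\in A,\ T\in\OX,
\]
and since $\OX$ is generated by $\Upsilon(A)$ and $t(X)$, it suffices to check this for $S=\Upsilon(b)$ and $S=t(x)$, i.e. on the generators $\pi(b)$ and $\Phi(x)$ of $\sigma(\OX)$.

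First I would handle $S=\Upsilon(b)$: here $\sigma(\Upsilon(b))=\pi(b)=\varphi_A(b)\ot 1_{\OX}$, which acts on $a\ots T$ as $ba\ots T$, and $U(ba\ots T)=\Upsilon(ba)T=\Upsilon(b)\Upsilon(a)T=\Upsilon(b)\cdot U(a\ots T)$ — so this case is immediate. The substantive case is $S=t(x)$. In the identity-morphism setting we take $B=A$, $Y=X$, $M=A$ (the identity correspondence), $\OY=\OX$, and $U_M=U_A=\ilx^{-1}\circ\irx\colon X\ots A\to A\ots X$. Then $T(x)\colon A\to A\otss X$ is $T(x)(a)=U_A(x\ots a)=\ilx^{-1}(x\cdot a)$, and $V_Y=V_X\colon X\otss\OX\to\overline{t(X)\OX}$ is the isomorphism of Lemma \ref{isos}(i), so $\Phi(x)=(1_A\ot V_X)(T(x)\ot 1_{\OX})$ as a compact operator on $A\otss\OX$. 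I would trace an elementary tensor $a\ots T\in A\otss\OX$ through $\Phi(x)$, using $A\otss X\cong X$ and $A\otss\OX\cong\OX$ (via $\iota_{l,\cdot}$, which $U$ essentially is once one identifies $\OY=\OX$) to see that $\Phi(x)(a\ots T)$ corresponds under $U$ to $t(xa)T=t(x)\Upsilon(a)T=t(x)\cdot U(a\ots T)$; the factorization $x\ots a\mapsto \ilx^{-1}(x\cdot a)$ followed by $V_X$ is precisely what turns "$x$ acting on $a$" into "$t(x)$ applied after $\Upsilon(a)$".

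The main obstacle is bookkeeping: one must carefully track the several canonical identifications ($A\ots X\cong X$, $A\ots\OX\cong\OX$, the definition of $V_X$ on elementary tensors, and the definition of $U_A$ as $\ilx^{-1}\circ\irx$) and confirm they compose coherently, so that the diagram relating $\sigma$, $\Phi$, and $U$ actually commutes rather than commuting only up to one of these identifications. Once the elementary-tensor computation $U(\Phi(x)(a\ots T))=t(x)\cdot U(a\ots T)$ is established, linearity, continuity, and the density of elementary tensors in $A\otss\OX$ extend it to all of $A\otss\OX$; together with the $\Upsilon(b)$ case and the fact that $\pi(A)$ and $\Phi(X)$ generate $\sigma(\OX)$, this shows $U$ is $\OX$-linear on the left, hence an $\OX$–$\OX$ correspondence isomorphism, giving $\pre{\OX}(A\ots\OX)_{\OX}\cong\pre{\OX}\OX_{\OX}$.
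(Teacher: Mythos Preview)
Your proposal is correct and follows essentially the same route as the paper: reduce to generators $\Upsilon(b)$ and $t(x)$, dispatch the $\Upsilon(b)$ case immediately, and for $t(x)$ unwind $\Phi(x)(a\ots T)$ via $U_A=\ilx^{-1}\circ\irx$ and $V_X$ to obtain $t(x\cdot a)T=t(x)\,U(a\ots T)$, then extend by linearity and density. The only cosmetic difference is that the paper invokes Cohen--Hewitt explicitly to write $x\cdot a=a'\cdot x'$ (so that $\ilx^{-1}(x\cdot a)=a'\ots x'$ is an elementary tensor one can push through $1_A\ot V_X$ and $U$ by hand), whereas you phrase the same step as composing the canonical identifications $A\ots X\cong X$ and $A\ots\OX\cong\OX$; either way the computation is the same.
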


\begin{proof}
Let $x\in X$, $S\in\OX$, $a\in A$. Note that $x\cdot a = a'\cdot x' $ for some $a'\in A, x'\in X,$  by Cohen-Hewitt factorization theorem. Now we have 
\begin{align*}
U\big[ t(x)\cdot (a\ots S) \big] = U\big[ \Phi(x)(a\ots S) \big] &= U\big[ a'\ots t(x')S\big] \\
&= \Upsilon(a')t(x')S\\
&= t(x\cdot a)S\\
&= t(x)U(a\ots S),
\end{align*}
which implies, by linearity and density, that $U[t(x)\cdot m ] = t(x)\cdot U(m)$ for any $m\in (A\ots \OX).$ Moreover, one can easily verify that $U[\Upsilon(a)\cdot m] = \Upsilon(a)\cdot U(m),$ for any $a\in A$ and $m\in (A\ots \OX)$. This complete the proof as  elements $t(x)$ and $\Upsilon(a)$ generate $\OX$. 
\end{proof}

\begin{definition}\label{sub}

A \C-correspondence \AXA\ is said to be a nondegenerate subcorrespondence of \BYB\ if there exists an $A-B$ correspondence homomorphism $(\phi, \varphi)$: \AXA $\rightarrow$ \BYB\ such that 
\begin{enumerate}[\normalfont(i)]
\item the linear map $\phi: X\rightarrow Y$ is injective;
\item the homomorphism $\varphi: A\rightarrow B$ is injective and non-degenerate;
\item  $Y=\overline{\phi(X)B}$.
\end{enumerate}
\end{definition}

Notice that any nondegenerate subcorrespondence of an injective correspondence is injective, by definition.

\begin{lemma}\label{XinXB}
 Let  \AXA\ be a nondegenerate subcorrespondence of  a \C-correspondence \BYB . Then, in \cat , there exists a morphism  from \AXA\ to \BYB\  if 
 \begin{equation}\label{j}
 J_X\cdot B\subset J_Y.
 \end{equation} Condition \tn{(\ref{j})} follows immediately when \BYB\ is injective. 
 \end{lemma}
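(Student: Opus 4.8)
The plan is to construct an explicit \ibm-free morphism in \cat\ from \AXA\ to \BYB\ by taking \AMB\ to be the identity-type bimodule $\overline{\varphi(A)B}=B$, viewed as an $A-B$ correspondence via the injective nondegenerate homomorphism $\varphi$ on the left and right multiplication of $B$ on the right. Since $\varphi$ is nondegenerate, $B$ with this structure is in fact a regular $A-B$ correspondence: the left action of $a\in A$ is left multiplication by $\varphi(a)\in B$, which lies in $\KK(B)=B$, and it is injective because $\varphi$ is; so the regularity hypothesis needed for a morphism in \cat\ is satisfied. The condition $J_X\cdot M\subset M\cdot J_Y$ then becomes exactly $\varphi(J_X)B\subset J_Y$, which is condition (\ref{j}) (using that $\varphi(J_X)B$ is the closed span of $\varphi(a)b$, i.e.\ $J_X\cdot M$ under this identification, and $M\cdot J_Y=BJ_Y=J_Y$).

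The heart of the argument is producing the correspondence isomorphism $U_M\colon X\ots_A B \to B\otss_B Y$. First I would observe that $B\otss_B Y\cong Y$ via $b\otss y\mapsto \varphi_Y(b)y$... more precisely via the canonical isomorphism $\iota_{l,y}$ (left-action isomorphism) composed appropriately, which is an $A-B$ correspondence isomorphism once $Y$ is given its $A-B$ structure through $\varphi$; this uses nondegeneracy of $Y$ as a $B$-correspondence (Cohen--Hewitt). So it suffices to exhibit an $A-B$ correspondence isomorphism $X\ots_A B\to Y$. The natural candidate is the map determined on elementary tensors by $x\otimes b\mapsto \phi(x)\cdot b$. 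Using Proposition \ref{pre} (with $X_0 = X\odot B$ as a pre-correspondence and $Z=Y$), I would check that this map preserves the left $A$-action — $a\cdot(x\otimes b)\mapsto \phi(ax)b=\varphi(a)\phi(x)b$ — and the $B$-valued inner products: $\langle x\otimes b, x'\otimes b'\rangle_B = b^*\varphi(\langle x,x'\rangle_A)b'$ on the left side, and $\langle \phi(x)b,\phi(x')b'\rangle_B = b^*\langle\phi(x),\phi(x')\rangle_B b' = b^*\varphi(\langle x,x'\rangle_A)b'$ on the right by property (ii) of a correspondence homomorphism. This gives an injective homomorphism $X\ots_A B\to Y$; surjectivity (hence it being an isomorphism) is exactly condition (iii) in Definition \ref{sub}, namely $Y=\overline{\phi(X)B}$.

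The main obstacle I anticipate is bookkeeping the three correspondence structures correctly: $Y$ carries its native $B-B$ structure, but in \cat\ the morphism requires $U_M$ to be an $A-B$ correspondence isomorphism, so I must consistently regard both $B\otss Y$ and (the target of) $X\ots M$ as $A-B$ correspondences, with the left $A$-action on $Y$ pulled back along $\varphi$. Once the identifications $B\otss_B Y\cong Y$ (as $A-B$ correspondences) and $X\ots_A B\cong Y$ are both set up compatibly, $U_M$ is their composite. Finally, for the last sentence: if \BYB\ is injective then $\varphi_Y$ is injective, and since $\varphi$ is also injective, $X$ is an injective subcorrespondence; moreover injectivity of $\varphi_Y$ forces $J_Y$ to be large — in the injective case $J_Y=\varphi_Y^{-1}(\KK(Y))$ and one checks $\varphi_Y(\varphi(J_X)b)=\varphi_Y(\varphi(a))\varphi_Y(b)$ lands in $\KK(Y)$ whenever $\varphi(a)$ does, using $\varphi(J_X)\subset$ the image of elements acting compactly on $X$ hence (via $\phi$ and $Y=\overline{\phi(X)B}$) compactly on $Y$ — so (\ref{j}) holds automatically. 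I would spell this last implication out carefully since it is where the subcorrespondence hypothesis (iii) is used a second time.
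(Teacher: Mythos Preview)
Your proposal is correct and follows essentially the same approach as the paper: both take $M={}_AB_B$ with left action via $\varphi$, build the isomorphism $X\ots B\to Y$ by $x\otimes b\mapsto \phi(x)b$ via Proposition~\ref{pre}, compose with the canonical $B\otss Y\cong Y$, and verify $J_X\cdot B\subset J_Y$. For the last sentence the paper's argument is slightly cleaner than your sketch: rather than arguing directly that elements of $\varphi(J_X)$ act compactly on $Y$, it uses the isomorphism $\xi\colon X\ots B\to Y$ already in hand together with Lemma~\ref{compacts}(2) to see that $\varphi_X(a)\ot 1_B\in\KK(X\ots B)$ for $a\in J_X$, whence $\varphi_Y(\varphi(a))=\ad\xi(\varphi_X(a)\ot 1_B)\in\KK(Y)$.
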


\begin{proof} Let $(\phi, \varphi)$: \AXA $\rightarrow$ \BYB\ be as in Definition \ref{sub}. Then, the homomorphism $\varphi$ induces a regular correspondence $\pre AB_B$. We first show that the unique linear map  \[ \xi : X\odot B \rightarrow Y \midtext{} x\otimes b \mapsto \phi(x)b,\]
for $x\in X$, $b\in B,$ extends to an $A-B$ correspondence isomorphism $X\ots B\rightarrow Y.$ As usual, we make all our computations with elementary tensors, as it suffices. For $x,x'\in X$, $a\in A$, and $b,b'\in B$, we have
\begin{align*}
\xi( a\cdot (x\otimes b) ) = \xi( a\cdot x\otimes b) &= \phi(a\cdot x)b \\
&= \varphi(a)\cdot\phi(x)b & \text{(by definition of $(\phi, \varphi)$)}\\
&=\varphi(a)\cdot\xi(x\otimes b) 
\end{align*}
and 
\begin{align*}
\<\xi(x\otimes b), \xi(x'\otimes b')\>_B = \< \phi(x)b, \phi(x')b'\>_B &= b^*\<\phi(x),\phi(x')\>_B b'  \\
&= b^*\varphi(\<x,x'\>_A)b'\\
&= \<b, \<x, x'\>_A \cdot b'\>_B \\
&= \<x\otimes b, x'\otimes b'\>_B. 
\end{align*}
Thus, by Proposition \ref{pre}, the map $\xi$ extends to an injective $A-B$ correspondence homomorphism, which is clearly surjective.

On the other hand, we have the $A-B$ correspondence isomorphism \[j: B\otss Y\rightarrow Y  \righttext{$b\otss y\mapsto b\cdot y$}.\]
Then, the composition
\begin{equation}\label{ub}U_B := j^{-1}\circ\xi: X\ots B\rightarrow B\otss Y\end{equation} is an $A-B$ correspondence isomorphism. We may now conclude that the isomorphism class $[\pre A B_B, U_B]$: \AXA $\rightarrow$ \BYB\ is a morphism in \cat\ if we are given the condition $J_X\cdot B \subset J_Y$.

We complete the proof by showing that if \BYB\ is injective then  $J_X\cdot B \subset J_Y$: let $a\in J_X$. Since $\pre AB_B$ is regular we have $\varphi_X(a)\ot 1_B\in\KK(X\ots B)$; which implies \[\ad\xi(\varphi_X(a)\ot 1_B)=\varphi_Y(\varphi(a))\in\KK(Y).\] This means $\varphi(a)\in J_Y,$ since \BYB\ is injective. Then for any $b\in B$, we have 
\[ a\cdot b = \varphi(a)b\in J_Y, \]
as desired. 
\end{proof}

\begin{proposition}\label{proplast}

Let \AXA\ and \BYB\ be injective correspondences, and let  $(\phi, \varphi)$: \AXA $\rightarrow$ \BYB\  be as in Definition \ref{sub}. Denote the C-covariant representation of \AXA\ on $\KK(B\otss \OY)$ by $(\pi, \Phi),$ the universal covariant representation of \BYB\ by $(\Upsilon_Y, t_Y)$, and the natural \C-algebra isomorphism $\KK(B\otss \OY)\rightarrow  \OY$ by $\iota$. Then, 
\[ \iota(\Phi(x)) = t_Y(\phi(x)) \midtext{and} \iota(\pi(a)) = \Upsilon_Y(\varphi(a))\]
for all $x\in X$, $a\in A$. In other words, the \C-algebra \C$(\pi,\Phi)$ is isomorphic to the \C-algebra generated by $t_Y(\phi(X))$ and $\Upsilon_Y(\varphi(A))$. 
\end{proposition}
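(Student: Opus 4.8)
The plan is to trace through the explicit construction of the C-covariant representation $(\pi,\Phi)$ in the special case where the regular correspondence is $\pre AB_B$ with the isomorphism $U_B$ from Lemma~\ref{XinXB}, and to identify each ingredient with its counterpart inside $\OY$. Recall from \eqref{ub} that $U_B = j^{-1}\circ\xi$, where $\xi(x\otss b)=\phi(x)\cdot b$ and $j(b\otss y)=b\cdot y$. The natural isomorphism $\iota\colon\KK(B\otss\OY)\to\OY$ arises from composing the isomorphism $B\otss\OY\cong\OY$ (induced by $\varphi$ and the fact that $\pre B(\OY)_{\OY}$ is nondegenerate since $\varphi$ is nondegenerate) with the identification $\KK(\OY)\cong\OY$. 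So the whole computation reduces to unwinding definitions.

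First I would compute $\iota(\pi(a))$. Since $\pi(a)=\varphi_B(a)\ot 1_{\OY}$ and $\varphi_B(a)$ is just left multiplication by $\varphi(a)\in B$, under the identification $B\otss\OY\cong\OY$ (sending $b\otss S\mapsto \Upsilon_Y(b)S$) the operator $\varphi_B(a)\ot 1_{\OY}$ becomes left multiplication by $\Upsilon_Y(\varphi(a))$; hence $\iota(\pi(a))=\Upsilon_Y(\varphi(a))$. Next, for $\iota(\Phi(x))$, recall $\Phi(x)=(1_B\ot V_Y)(T(x)\ot 1_{\OY})$ where $T(x)m=U_B(x\otss m)$ and $V_Y\colon Y\otss\OY\to\overline{t_Y(Y)\OY}$ is the isomorphism of Lemma~\ref{isos}(i), $y\otss S\mapsto t_Y(y)S$. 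For $b\in B$, $S\in\OY$ we have $T(x)b = U_B(x\otss b) = j^{-1}(\xi(x\otss b)) = j^{-1}(\phi(x)\cdot b)$, which in $B\otss Y$ is an element pairing with the $\OY$-leg to give, after applying $1_B\ot V_Y$ and then the identifications, precisely $t_Y(\phi(x)\cdot b)\cdot S = t_Y(\phi(x))\Upsilon_Y(b)S$. Tracking this through $\iota$ shows $\iota(\Phi(x))$ acts as left multiplication by $t_Y(\phi(x))$, i.e.\ $\iota(\Phi(x))=t_Y(\phi(x))$. The final sentence is then immediate: $\iota$ restricts to a $*$-isomorphism from $C^*(\pi,\Phi)$ onto the $C^*$-subalgebra of $\OY$ generated by $t_Y(\phi(X))$ and $\Upsilon_Y(\varphi(A))$.

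The main obstacle I anticipate is bookkeeping the chain of canonical identifications carefully enough to be sure the element $j^{-1}(\phi(x)b)\in B\otss Y$ really maps to $t_Y(\phi(x)b)$ under $1_B\ot V_Y$ followed by $B\otss\overline{t_Y(Y)\OY}\cong\overline{t_Y(Y)\OY}\subset\OY$; one must check that the ``outer'' $B$-leg is absorbed correctly via $\Upsilon_Y$ and that the covariance/injectivity hypotheses (so that $V_Y$ and the embedding $\KK(B\otss\OY)\hookrightarrow\OY$ are genuine isomorphisms onto their natural targets) are all in force — these use injectivity of \BYB\ and nondegeneracy of $\varphi$. Once the identifications are pinned down, verifying the two displayed equalities is a routine computation on elementary tensors, and the rest follows because a $*$-homomorphism that is an isomorphism carries a generated $C^*$-algebra onto the $C^*$-algebra generated by the images of the generators.
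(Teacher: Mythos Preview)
Your proposal is correct and follows essentially the same route as the paper: decompose $\iota$ as $\ad\mu$ (with $\mu(b\otss S)=\Upsilon_Y(b)S$) followed by $\KK(\OY)\cong\OY$, then verify on elementary tensors that $\mu\circ\Phi(x)=t_Y(\phi(x))\circ\mu$ and $\mu\circ\pi(a)=\Upsilon_Y(\varphi(a))\circ\mu$. The only refinement the paper adds beyond your outline is exactly the bookkeeping step you flagged: it invokes Cohen--Hewitt to write $j^{-1}(\phi(x)b)=b'\otss y$ with $\phi(x)b=b'\cdot y$, so that $(1_B\ot V_Y)$ sends $b'\otss y\otss S$ to $b'\otss t_Y(y)S$ and then $\mu$ yields $\Upsilon_Y(b')t_Y(y)S=t_Y(\phi(x)b)S$.
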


\begin{proof} Note first that the map $\iota$ is really the composition of the isomorphisms $s: \KK(\OY)\rightarrow \OY$ and $\ad \mu: \KK(B\otss \OY)\rightarrow \KK(\OY),$ where $\mu$ denotes the $A-\OY$ module isomorphism $B\otss \OY\rightarrow \OY$ determined on elementary tensors by $\mu(b\otss S)=\Upsilon_Y(b)S,$ for $b\in B,$ $S\in\OY$. We will show that the the following diagram commutes.

\begin{center}
\begin{tikzpicture}[scale=1, transform shape]
\node (K) [scale=0.8]  {$\KK(B\otss\OY)$};
\node (KK) [right of=K, xshift=2cm, scale=0.8] {$\KK(\OY)$};
\node (X) [above of=K, left of=K, scale=0.8] {$X$};
\node(Y)[ right of=X, scale=0.8]{$Y$};
\node(KB)[right of=KK, xshift=2cm, scale=0.8]{$\OY$};
\draw[right hook->] (X) to  node[scale=0.8, above] {$\phi$} (Y);
\draw[->] (X) to node[scale=0.8, left] {$\Phi$} (K);
\draw[->, bend left] (Y) to node[scale=0.8, above] {$t_Y$} (KB);
\draw[->] (K) to node[scale=0.7, above] {$\ad\mu$} (KK);
\draw[->] (KK) to node[scale=0.7, above] {$s$} (KB);
\node(A) [left of=K, below of=K, scale=0.8]{$A$};
\node(B)[right of=A, scale=0.8]{$B$};
\draw[right hook->] (A) to node[scale=0.8, below] {$\varphi$} (B);
\draw[->] (A) to node[scale=0.8, left] {$\pi$} (K);
\draw[->, bend right] (B) to node[scale=0.8, below] {$\Upsilon_Y$} (KB);
\end{tikzpicture}
\end{center}

Let $U_B:=j^{-1}\circ \xi: \pre A(X\ots B)_B \rightarrow \pre A(B\otss Y)_B$ be the isomorphism (\ref{ub}) defined in the proof of Lemma \ref{XinXB}. Let $x\in X, b\in B,$ and $S\in\OY$. Then,  \[\Phi(x)(b\otss S) = (1_B\ot V_Y)(U_B(x\ots b)\otss S)\]
by construction, where $V_Y$ is the unitary map $Y\otss \OY \rightarrow \overline{t_Y(Y)\OY}.$ Note here that we have 
\[U_B(x\ots b) = j^{-1}\circ\xi(x\ots b)=j^{-1}(\phi(x)b)=b'\otss y,\]
for some $b'\in B, y\in Y$ satisfying $\phi(x)b=b'y,$ by Cohen-Hewitt factorization theorem.
This gives us  \[\Phi(x)(b\otss S)= (1_B\ot V_Y)(U_B(x\ots b)\otss S)= b'\otss t_Y(y)S, \]
which implies 
\begin{align*}
\mu\circ \Phi(x)(b\otss S) = \mu(b'\otss t_Y(y)S) &= \Upsilon_Y(b')t_Y(y)S\\
&= t_Y(b' \cdot y)S\\
&= t_Y(\phi(x)b)S\\
&= t_Y(\phi(x))\Upsilon_Y(b)S \\
&= t_Y(\phi(x))\mu(b\otss S)\\
&= s^{-1}\big(t_Y(\phi(x)\big)[\mu(b\otss S)].
\end{align*}
This computation allows one to conclude, by linearity and density, that 
\begin{align*}
 s^{-1}\big(t_Y(\phi(x))\big) &= \mu\circ\Phi(x)\circ\mu^{-1} \\
 &= \ad\mu\big(\Phi(x)\big),
 \end{align*}
 which implies $\iota\big(\Phi(x)\big)=t_Y\big(\phi(x)\big)$, as desired. Lastly, for $a\in A,$ we have
\[ \mu\circ\pi(a)(b\otss S)= \mu(\varphi(a)b\otss S)= \Upsilon_Y(\varphi(a)b)S=\Upsilon_Y(\varphi(a))\mu(b\otss S),\]
which suffices to complete the proof. 
\end{proof}

Recall that a representation $(\pi,t)$ of $X$ \emph{admits a gauge action} if for each $z\in\TT$ there exists a homomorphism $\beta_z : C^*(\pi,t) \rightarrow C^*(\pi,t)$ such that \[\beta_z(\pi(a))=\pi(a) \midtext{and} \beta_z(t(x))=zt(x)\]  for all $a\in A,$ and $x\in X.$ If it exists, the homomorphism $\beta_z$ is unique. The map  \[\beta: \TT\rightarrow Aut\left( C^*(\pi, t) \right), \righttext{ $z \mapsto \beta_z$}\] 
is called the \emph{gauge action}. One can easily show that $\beta$ is a strongly continuous homomorphism.

\begin{theorem}[The Gauge Invariant Uniqueness Theorem]\label{GUT}
Let the  pair $(\Upsilon,t)$ be the universal covariant representation of  \AXA .  Assume $(\phi_X, t_X)$ is an injective covariant representation of \AXA\ on a \C-algebra $B$. If  $(\phi_X, t_X)$ admits a gauge action, then the homomorphism $\rho: \OX\rightarrow B$ is injective. In other words, the  natural surjection $\rho: \OX\rightarrow C^*(\phi_X, t_X)$ is an isomorphism. 

\end{theorem}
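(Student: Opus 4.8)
The plan is to run the classical averaging (``gauge-invariant uniqueness'') argument. First I would realize the map: since $(\phi_X,t_X)$ is a covariant representation of \AXA\ on $B$, the universal property of $\OX$ furnishes a unique $*$-homomorphism $\rho\colon\OX\to B$ with $\rho\circ\Upsilon=\phi_X$ and $\rho\circ t=t_X$; its image is $C^*(\phi_X,t_X)$, so injectivity of $\rho$ is precisely the asserted isomorphism. Next I would equip $\OX$ itself with a gauge action $\gamma\colon\TT\to\aut(\OX)$: for $z\in\TT$ the pair $(\Upsilon,zt)$ is again a covariant representation — linearity and the inner-product and left-action identities are immediate, and $\Psi_{zt}=\Psi_t$ since the scalars cancel in $zt(x)\,(zt(y))^*$, so the covariance relation on $J_X$ is untouched — hence universality yields $\gamma_z$ with $\gamma_z(\Upsilon(a))=\Upsilon(a)$ and $\gamma_z(t(x))=zt(x)$, and $\gamma_{\bar z}$ is a two-sided inverse, giving a strongly continuous action. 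Writing $\beta$ for the hypothesized gauge action on $C^*(\phi_X,t_X)$, the identities $\rho\gamma_z=\beta_z\rho$ hold on the generators $\Upsilon(a),t(x)$, hence everywhere.

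Averaging over normalized Haar measure on $\TT$ produces a faithful conditional expectation $E\colon\OX\to\OX^\gamma$ onto the fixed-point algebra and likewise $E'$ onto $C^*(\phi_X,t_X)^\beta$, with $\rho\circ E=E'\circ\rho$ by equivariance. If $a\in\OX$ satisfies $\rho(a)=0$, then $\rho\big(E(a^*a)\big)=E'\big(\rho(a^*a)\big)=0$ with $E(a^*a)\in\OX^\gamma$, so once $\rho\restriction{\OX^\gamma}$ is known injective, faithfulness of $E$ forces $a^*a=0$, i.e.\ $a=0$; thus it suffices to prove $\rho$ injective on $\OX^\gamma$. For this I would invoke the standard description of the fixed-point algebra: the elements $t^n(\xi)t^m(\eta)^*$ are $\gamma$-eigenvectors for $z\mapsto z^{n-m}$ and span a dense subalgebra, so $\OX^\gamma=\overline{\bigcup_{n\ge 0}\mathscr B_{[0,n]}}$ with $\mathscr B_{[0,n]}:=\clspn\{t^k(\xi)t^k(\eta)^*:0\le k\le n,\ \xi,\eta\in X^{\otimes k}\}$ an increasing chain of \C-subalgebras, the nesting coming from the covariance relation on $J_X$ together with $\overline{A\cdot X}=A\cdot X$.

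I would then prove by induction on $n$ that $\rho$ is isometric on $\mathscr B_{[0,n]}$, whence by continuity injective on $\OX^\gamma$. For $n=0$ we have $\mathscr B_{[0,0]}=\Upsilon(A)$ and $\rho=\phi_X\circ\Upsilon^{-1}$ there, which is injective by hypothesis. For the step, consider the extension
\[
0\longrightarrow\Psi_{t^{n+1}}\big(\KK(X^{\otimes(n+1)})\big)\longrightarrow\mathscr B_{[0,n+1]}\longrightarrow\mathscr B_{[0,n+1]}\big/\Psi_{t^{n+1}}\big(\KK(X^{\otimes(n+1)})\big)\longrightarrow 0 .
\]
Since $\rho\circ t^{n+1}=t_X^{n+1}$ we have $\rho\circ\Psi_{t^{n+1}}=\Psi_{t_X^{n+1}}$, which is injective because $(\phi_X,t_X)$ is, so $\rho$ is isometric on the ideal; the quotient is a quotient of $\mathscr B_{[0,n]}$ — the relevant kernel being governed by the Katsura ideal $J_X$ — on which $\rho$ descends injectively by the inductive hypothesis; and $\rho$ is compatible with the extension, so injectivity on ideal and quotient forces injectivity on $\mathscr B_{[0,n+1]}$.

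The genuinely technical part, and the one I expect to be the main obstacle, is this inductive step on the core: setting up the extension with the correct identification of the quotient as a subquotient of $\mathscr B_{[0,n]}$, and carefully tracking the Katsura ideal $J_X$ and the possibly non-injective left action $\varphi_X$ through the intersection $\mathscr B_{[0,n]}\cap\Psi_{t^{n+1}}(\KK(X^{\otimes(n+1)}))$. This is precisely the bookkeeping carried out in Katsura's proof of gauge-invariant uniqueness, and one may either reproduce it in this setting or cite it directly; everything else above (universality, building $\gamma$, the averaging and faithfulness reduction) is short and routine.
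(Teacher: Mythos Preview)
Your outline is correct and is precisely the standard Katsura argument. The paper does not supply its own proof of this theorem: immediately after the statement it simply writes ``A proof of the above theorem can be found in \citep{katsura},'' so there is nothing to compare against beyond noting that what you have sketched \emph{is} the proof being cited. Your identification of the delicate point---the inductive control of $\mathscr B_{[0,n]}\cap\Psi_{t^{n+1}}(\KK(X^{\otimes(n+1)}))$ via the Katsura ideal $J_X$---is accurate, and your suggestion to either reproduce that bookkeeping or cite Katsura directly is exactly what the paper itself does.
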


A proof of the above theorem can be found in \citep{katsura}.

\begin{remark}\label{gaugeremark}
 Let $\gamma$ be the gauge action for the universal covariant representation $(\Upsilon , t)$ of \BYB . Then, for any $z\in \TT$, we have \[\gamma_z(t^n(y_n))= z^nt^n(y_n) \midtext{and} \gamma_z(t^n(y_n)^*)= z^{-n}t^n(y_n)^* ,\] where $y_n\in Y^{\otimes n}$ . Now, for each $n\in \ZZ$ consider the subspace
  \[ \On := \{T\in \OY  : \gamma_z(T)= z^n(T), \hspace{.2cm} \text{for all $z\in \TT$}\} . \]
We have
\begin{align*}
\OY&=\clspn\{ \tn{$t^n(y_n)t^m(y_m)^*$: $y_n\in Y^{\otimes n}$,  $y_m\in Y^{\otimes m}$, $n,m\geq 0$} \}\\
&=\clspn\{ T_s \in \mathcal{O}_{Y}^{s}: \tn{$s\in\ZZ$} \},
\end{align*}
which implies that elements of form $m\otss T_n$, where $m\in M$ and $T_n\in \On$, densely span $M\otss\OY$. 
\end{remark}


\begin{proposition}\label{gauge}
Let \tn{[\AMB, $U_M$]}: \AXA\ $\rightarrow$ \BYB\ be a morphism in \cat . The associated  C-covariant representation $(\pi, \Phi)$ of \AXA\ admits a gauge action. 
\end{proposition}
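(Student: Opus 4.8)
The plan is to construct, for each $z\in\TT$, a candidate homomorphism $\beta_z$ on $C^*(\pi,\Phi)$ and verify that it satisfies $\beta_z(\pi(a))=\pi(a)$ and $\beta_z(\Phi(x))=z\Phi(x)$. The natural source of the gauge action is the gauge action $\gamma$ on $\OY$ coming from the universal covariant representation $(\Upsilon,t)$ of \BYB; by Remark \ref{gaugeremark} we have the eigenspace decomposition $\OY=\clspn\{\On:n\in\ZZ\}$. First I would use $\gamma_z$ to define a unitary $u_z$ on the Hilbert $\OY$-module $M\otss\OY$. The issue is that $\gamma_z$ is only a $*$-homomorphism of $\OY$, not $\OY$-linear, so $1_M\ot\gamma_z$ is not literally an operator on $M\otss\OY$; instead I would set $u_z(m\otss T):= m\otss \gamma_z(T)$ on the dense set of elementary tensors and check it is well-defined and isometric using that $\<m\otss T, m'\otss T'\>_{\OY}=\<T,\varphi_{\OY}(\<m,m'\>_B)T'\>_{\OY}=T^*\<m,m'\>_B T'$ together with the fact that $\gamma_z$ fixes $B=\OY^0\subset\OY$ and is a $*$-homomorphism; surjectivity is clear since $\gamma_z$ is invertible, so $u_z$ extends to a unitary on $M\otss\OY$. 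Equivalently, using Remark \ref{gaugeremark}, one can simply declare $u_z$ to act on the dense span of the $m\otss T_n$ (with $T_n\in\On$) by $u_z(m\otss T_n)=z^n\, m\otss T_n$.

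Next I would define $\beta_z:=\ad u_z$ on $\KK(M\otss\OY)$, i.e. $\beta_z(k)=u_z k u_z^*$; this is a $*$-automorphism of $\KK(M\otss\OY)$, and I claim it restricts to an automorphism of $C^*(\pi,\Phi)$ with the required properties. For $\pi$: since $\pi(a)=\varphi_M(a)\ot 1_{\OY}$ and $u_z$ only touches the $\OY$-leg through $\gamma_z$, which commutes with the left $\OY$-action in the appropriate sense — more precisely $u_z(\varphi_M(a)\ot 1_{\OY})(m\otss T)=u_z(\varphi_M(a)m\otss T)=\varphi_M(a)m\otss\gamma_z(T)=(\varphi_M(a)\ot 1_{\OY})u_z(m\otss T)$ — we get $u_z\pi(a)u_z^*=\pi(a)$, so $\beta_z(\pi(a))=\pi(a)$. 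For $\Phi$: recall $\Phi(x)=(1_M\ot V_Y)(T(x)\ot 1_{\OY})$ where $V_Y:Y\otss\OY\to\overline{t(Y)\OY}$ and $T(x)m=U_M(x\ots m)$. I would compute $\beta_z(\Phi(x))$ on an elementary tensor $m\otss T$, writing $U_M(x\ots m)=\sum m_i\otss y_i$, so that $\Phi(x)(m\otss T)=\sum m_i\otss t(y_i)T$, hence $u_z\Phi(x)u_z^*(m\otss\gamma_z(T))=u_z\big(\sum m_i\otss t(y_i)\gamma_z(T)^*{}^*\big)$ — being careful with conjugation — and using $\gamma_z(t(y_i)S)=\gamma_z(t(y_i))\gamma_z(S)=z\,t(y_i)\gamma_z(S)$ one gets an overall factor of $z$, i.e. $u_z\Phi(x)u_z^*=z\Phi(x)$. (Cleanest is to work on the dense set of $m\otss T_n$, $T_n\in\On$: then $\Phi(x)(m\otss T_n)\in\clspn\{m'\otss T_{n+1}': T_{n+1}'\in\OY^{n+1}\}$ by Lemma \ref{isos}(i), and conjugating by $u_z$ multiplies the $\gamma$-degree-$(n+1)$ output by $z^{n+1}$ and the degree-$n$ input by $z^{-n}$, netting $z$.) This shows $\beta_z$ maps the generators $\pi(A)\cup\Phi(X)$ of $C^*(\pi,\Phi)$ into $C^*(\pi,\Phi)$, multiplying $\Phi(X)$ by $z$ and fixing $\pi(A)$; since $\beta_{\bar z}$ is an inverse, $\beta_z$ restricts to an automorphism of $C^*(\pi,\Phi)$, which is exactly the assertion that $(\pi,\Phi)$ admits a gauge action.

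\textbf{Main obstacle.} The delicate point is the first step: making precise the "operator" $u_z$ on $M\otss\OY$ induced by $\gamma_z$. One must verify it is well-defined on the balanced tensor product (it is, because $\gamma_z|_B=\id_B$, so $u_z(mb\otss T)=mb\otss\gamma_z(T)$ and $u_z(m\otss\varphi_{\OY}(b)T)=m\otss\gamma_z(b T)=m\otss b\gamma_z(T)$ agree), isometric for the $\OY$-valued inner product (using $\gamma_z$ is a $*$-homomorphism fixing $B$), and surjective; then $\ad u_z$ makes sense on $\KK(M\otss\OY)$. Everything after that is a routine generator check, streamlined by invoking Remark \ref{gaugeremark} to reduce to the $\gamma$-homogeneous elements $m\otss T_n$. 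Strong continuity of $z\mapsto\beta_z$, if needed, follows from strong continuity of $\gamma$ on the generators.
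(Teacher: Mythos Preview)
Your overall strategy coincides with the paper's: build the gauge action on $C^*(\pi,\Phi)$ by conjugating with the ``operator'' $1_M\ot\gamma_z$ on $M\otss\OY$ and then check the generators using Remark~\ref{gaugeremark}. The generator computations you sketch are essentially identical to those in the paper.

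The genuine gap is in your first step. Your $u_z$ is \emph{not} isometric for the $\OY$-valued inner product and is \emph{not} adjointable, so it is not a unitary in $\LL(M\otss\OY)$ and $u_z^*$ does not exist. Concretely,
\[
\big\langle u_z(m\otss T),\,u_z(m'\otss T')\big\rangle_{\OY}
=\gamma_z(T)^*\,\langle m,m'\rangle_B\,\gamma_z(T')
=\gamma_z\big(T^*\langle m,m'\rangle_B\,T'\big)
=\gamma_z\big(\langle m\otss T,\,m'\otss T'\rangle_{\OY}\big),
\]
which lies in $\OY$, not in $B$, and is in general \emph{not} equal to $\langle m\otss T,\,m'\otss T'\rangle_{\OY}$; the fact that $\gamma_z$ fixes $B$ only tells you that the middle factor is unchanged, not the whole product. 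The paper makes exactly this point: $1_M\ot\gamma_z$ is a norm-isometric bijection of $M\otss\OY$ but is not adjointable (the would-be adjoint depends on the $\gamma$-degree of the input).

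The correct formulation, which the paper carries out, is to set $\beta_z(T)=(1_M\ot\gamma_z)\,T\,(1_M\ot\gamma_{\bar z})$ using the \emph{inverse} rather than a nonexistent adjoint, and then to verify directly that for $k\in\KK(M\otss\OY)$ the operator $(1_M\ot\gamma_z)\,k\,(1_M\ot\gamma_{\bar z})$ is again adjointable (indeed compact); this is checked on rank-one operators $\theta_{m_1\otss T_i,\,m_2\otss T_j}$ with $T_i\in\OY^i$, $T_j\in\OY^j$. Once that is in place, your generator verification that $\beta_z(\pi(a))=\pi(a)$ and $\beta_z(\Phi(x))=z\Phi(x)$ (via the degree-shift $\Phi(x)(m\otss T_n)\in M\otss\OY^{n+1}$) goes through exactly as you describe, and matches the paper's argument.
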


\begin{proof}
 Let $\gamma$ be the gauge action for the universal covariant representation $(\Upsilon , t)$ of \BYB , and let $z\in\TT$.  The linear map $1_M\ot \gamma_z : M\odot \OY \rightarrow M\odot \OY$ satisfying \[(1_M\ot \gamma_z) (m\ot S) = m\ot \gamma_z(S)\] for $m\in M$, $S\in \OY$ is bounded. Indeed, let $\sum_i m_i\ot S_i \in M\odot\OY$. 
We have 
\begin{align*}
\norms{  (1_M\ot \gamma_z)\big(\tsum_i m_i\ot S_i\big)}_{\OY}^2 & = \norms{ \tsum_{i,j}\big\<\gamma_z(S_i), \<m_i,m_j\>_B \cdot \gamma_z(S_j)\big\>_{\OY}}\\
& = \norms{ \tsum_{i,j}\gamma_z(S_i)^* \gamma_z(\<m_i,m_j\>_B\cdot S_j)}  \\
& = \norms{ \tsum_{i,j} \gamma_z(S_i^*\<m_i,m_j\>_B\cdot S_j)} \\
& = \norms{ \gamma_z( \tsum_{i,j}(S_i^*\<m_i,m_j\>_B\cdot S_j)} \\
& = \norms{ \tsum_{i,j}S_i^*\<m_i,m_j\>_B\cdot S_j} \\ 
& = \norms { \tsum_i m_i\ot S_i}^2, 
\end{align*}
where the symbol $\norm{\cdot}$ represents the semi-norm on the pre-correspondence $M\odot\OY$. Now by continuity we may conclude that $1_M\ot \gamma_z$ extends to a well-defined bounded linear operator on $M\otss\OY$. However, this operator is not adjointable. This can be easily seen with elementary tensors: let $m,n\in M$ and $T_i\in\OY^i$, $T_j\in\OY^j$. The computation
\begin{align*}
\< (1_M\otimes \gamma_z)(m\otss T_i), n\otss T_j\>_{\OY}&
= \< m\otss \gamma_z(T_i), n\otss T_j\>_{\OY} \\
&= \<z^i T_i, \<m,n\>_B\cdot T_j\>_{\OY}\\
&= \<T_i, z^{-i}\<m,n\>_B\cdot T_j\>_{\OY},
\end{align*}
suffices. 

We claim that the homomorphism $\beta_z: C^*(\pi, \Phi)\rightarrow  C^*(\pi, \Phi)$ defined by 
\[  \beta_z(T) = (1_M\ot \gamma_z)T(1_M\ot \gamma_{\overline{z}}) \]
is a gauge action for the representation $(\pi,\Phi).$ The key point here is that even though  $1_M\ot \gamma_z$ is not an adjointable operator on $M\otss \OY$,  the operator 
\[ (1_M\ot \gamma_z)k(1_M\ot \gamma_{\bar{z}}) \]
\emph{is} adjointable for any $k\in \KK(M\otss\OY)$. It suffices to prove this for $k=\theta_{m_1\otss T_i, m_2\otss T_j}$, where $m_1, m_2\in M$, $T_i\in \OY^i,$ $T_j\in \OY^j$ by Remark \ref{gaugeremark}.  Let $m,n\in M,$ $T_l\in \OY^l$ and $T_k\in \OY^k$. First observe that we have 

\begin{align*}
(1_M\ot \gamma_z)k(1_M\ot \gamma_{\bar{z}})(m\otss T_k)
&= m_1\ot \gamma_z \big(T_i \<m_2\ot T_j, m\otss z^{-k}T_k\>_{\OY}\big) \\
& = m_1\ot\gamma_z \big( T_i T_j^* \<m_2,m\>_B\cdot z^{-k}T_k\big)\\
&= m_1\ot z^{i-j+k} z^{-k} T_i T_j^*\<m_2, m\>_B \cdot T_k .\\
\end{align*}
This allows us to make the following computation.

 \begin{align*}
&\big\<(1_M\ot \gamma_z)k(1_M\ot \gamma_{\bar{z}})(m\otss T_k),n\otss T_l\big\>_{\OY}\\
&\hspace{3cm}= \big\<m_1\otss z^{i-j+k} z^{-k} T_i T_j^*\<m_2, m\>_B \cdot T_k, n\otss T_l\big\>_{\OY} \\
& \hspace{3cm}= \big\<z^{i-j}T_i T_j^*\<m_2,m\>_B\cdot T_k, \<m_1, n\>_B\cdot T_l\big\>_{\OY} \\
&\hspace{3cm}= \big\< T_k, z^{j-i} \<m, m_2\>_B \cdot T_j T_i^* \<m_1, n\>_B\cdot T_l\big\>_{\OY}\\
& \hspace{3cm}= \big\< m\otss T_k, (1_M\ot \gamma_z)k^*(1_M\ot \gamma_{\bar{z}})(n\otss T_l)\big\>_{\OY}.
\end{align*}

Now, let  $z\in\TT$. In order to complete the proof of our claim we need to show 
\[ \beta_z(\Phi(x))\xi = z\Phi(x)\xi \midtext{and} \beta_z(\pi(a))\xi=\pi(a)\xi,\]
for any $\xi\in (M\otss \OY)$, $x\in X$, $a\in A.$ We check the equilaties for the elements of form $m\otss T_n$ as it suffices. A crucial fact here is that for an $m\in M$, $T_n\in \OY^n$, we have $\Phi(x)(m\otss T_n)\in M\otss\OY^{n+1}$ by construction, and thus  \begin{equation}\label{g} (1_M\ot \gamma_z)\Phi(x)(m\otss T_n)= z^{n+1}\Phi(x)(m\otss T_n).\end{equation} This allows us to make the following computation.
\begin{align*}
\beta_z(\Phi(x)) (m\otss T_n) &= (1_M\ot \gamma_z) \Phi(x) (m\otss z^{-n}T_n) \\
&= z^{-n} (1_M\ot\gamma_z)  \Phi(x)(m\otss T_n),\\
&= z^{-n} z^{n+1} \Phi(x)(m\otss T_n) \\
& = z\Phi(x)(m\otss T_n).
\end{align*}

One can show very similarly that, for any $a\in A$, we have  \[\beta_z(\pi(a))(m\otss T_n)= \pi(a) (m\otss T_n),\] since $\pi(a)(m\otss T_n)\in M\otss\OY^{n}$, for any $n\in\ZZ$. 
\end{proof}

The Gauge Invariant Uniqueness Theorem now gives us the following result. 

\begin{theorem}\label{gaugeresult}

Let \tn{[\AMB, $U_M$]}: \AXA\ $\rightarrow$ \BYB\  be a morphism in \cat , and let the pair $(\pi, \Phi)$ be the C-covariant representation of \AXA . Then, the associated homomorphism $\sigma : \OX\rightarrow \KK(M\otss \OY)$ is injective. Moreover, the Cuntz-Pimsner algebra $\OX$ is isomorphic to the \C-algebra \C$(\pi,\Phi)\subset\KK(M\otss\OY).$

\end{theorem}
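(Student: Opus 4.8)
The plan is to invoke the Gauge Invariant Uniqueness Theorem (Theorem \ref{GUT}) directly, using the two facts that have already been assembled: that $(\pi,\Phi)$ is an injective covariant representation of \AXA\ (Proposition \ref{rep}), and that it admits a gauge action (Proposition \ref{gauge}). First I would recall that by the universal property of $\OX$ the C-covariant representation $(\pi,\Phi)$ induces a surjective homomorphism $\sigma:\OX\rightarrow \tn{\C}(\pi,\Phi)\subset \KK(M\otss \OY)$ satisfying $\sigma(t_X(x))=\Phi(x)$ and $\sigma(\Upsilon_X(a))=\pi(a)$, which is exactly the homomorphism from the Corollary following Definition \ref{c}.

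Next I would verify that the hypotheses of Theorem \ref{GUT} are met with $B=\KK(M\otss\OY)$ and $(\phi_X,t_X)=(\pi,\Phi)$. Injectivity of the representation is the content of Proposition \ref{rep}, since $\pi(a)=\varphi_M(a)\ot 1_{\OY}$ is injective (as $\varphi_M$ is injective by regularity of \AMB\ and $\OY$ has injective left action on itself, so Lemma \ref{compacts} applies). The existence of the gauge action is Proposition \ref{gauge}: for each $z\in\TT$ the map $\beta_z(T)=(1_M\ot\gamma_z)T(1_M\ot\gamma_{\bar z})$ is a well-defined automorphism of $\tn{\C}(\pi,\Phi)$ with $\beta_z(\pi(a))=\pi(a)$ and $\beta_z(\Phi(x))=z\Phi(x)$. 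With both hypotheses in hand, Theorem \ref{GUT} yields that $\sigma$ is injective, hence an isomorphism onto its image $\tn{\C}(\pi,\Phi)$.

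Finally I would assemble the conclusion: $\sigma:\OX\rightarrow \KK(M\otss\OY)$ is injective, and its image is by definition the \C-algebra $\tn{\C}(\pi,\Phi)$ generated by $\pi(A)$ and $\Phi(X)$, which sits inside $\KK(M\otss\OY)$; therefore $\OX\cong \tn{\C}(\pi,\Phi)$. I do not anticipate a genuine obstacle here, since this theorem is essentially a bookkeeping corollary that collects Proposition \ref{rep}, Proposition \ref{gauge}, and the universal property; the only mild subtlety worth a sentence is to note explicitly that $\sigma$ agrees with the map from the earlier Corollary, so that "the associated homomorphism $\sigma$" in the statement is unambiguous, and that the codomain is legitimately $\KK(M\otss\OY)$ rather than merely $\LL(M\otss\OY)$ — which is exactly why $\pi(A)\subset\KK(M\otss\OY)$ and $\Phi(X)\subset\KK(M\otss\OY)$ were checked in the proof of Proposition \ref{rep}.
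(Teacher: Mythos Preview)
Your proposal is correct and matches the paper's approach exactly: the paper itself treats this theorem as an immediate consequence, introducing it with the sentence ``The Gauge Invariant Uniqueness Theorem now gives us the following result'' and giving no further proof. Your write-up simply spells out the invocation of Theorem~\ref{GUT} via Proposition~\ref{rep} and Proposition~\ref{gauge}, which is precisely what the paper intends.
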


\begin{corollary}
If \AXA\ is a nondegenerate subcorrespondence of \BYB , then $\OX$ is isomorphic to a subalgebra of $\OY$. 
\end{corollary}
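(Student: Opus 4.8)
The plan is to combine the two results established just before this corollary: Lemma~\ref{XinXB}, which produces a morphism in \cat\ out of a nondegenerate subcorrespondence, and Proposition~\ref{proplast} together with Theorem~\ref{gaugeresult}, which together identify $\OX$ with a concrete subalgebra of $\OY$. So suppose \AXA\ is a nondegenerate subcorrespondence of \BYB\ via $(\phi,\varphi)$ as in Definition~\ref{sub}. Since \BYB\ is injective by hypothesis (a subcorrespondence statement only makes sense here when the ambient correspondence is injective, which also forces \AXA\ to be injective), Lemma~\ref{XinXB} guarantees that $J_X\cdot B\subset J_Y$, hence that $[\pre AB_B,U_B]\colon \AXA\to\BYB$ is a genuine morphism in \cat, where $U_B=j^{-1}\circ\xi$ is the isomorphism $X\ots B\to B\otss Y$ from~(\ref{ub}).

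Next I would invoke Theorem~\ref{gaugeresult} applied to this morphism: with $(\pi,\Phi)$ the C-covariant representation of \AXA\ on $\KK(B\otss\OY)$, the algebra $\OX$ is isomorphic to $C^*(\pi,\Phi)$. Then Proposition~\ref{proplast} — which applies precisely because both \AXA\ and \BYB\ are injective and $(\phi,\varphi)$ is as in Definition~\ref{sub} — identifies $C^*(\pi,\Phi)$, via the natural isomorphism $\iota\colon\KK(B\otss\OY)\to\OY$, with the \C-subalgebra of $\OY$ generated by $t_Y(\phi(X))$ and $\Upsilon_Y(\varphi(A))$. Composing these two isomorphisms yields $\OX\cong C^*\big(t_Y(\phi(X)),\Upsilon_Y(\varphi(A))\big)\subset\OY$, which is the assertion.

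There is essentially no residual obstacle: the corollary is a bookkeeping consequence of the two preceding results, and the only point requiring a word of care is the implicit hypothesis that \BYB\ (hence \AXA) is injective, so that both Lemma~\ref{XinXB} and Proposition~\ref{proplast} are applicable. One could optionally remark that the resulting embedding $\OX\hookrightarrow\OY$ intertwines the gauge actions, since $t_Y(\phi(x))$ has gauge degree $1$ and $\Upsilon_Y(\varphi(a))$ has gauge degree $0$, matching the gauge action on $\OX$; but this is not needed for the statement as given.
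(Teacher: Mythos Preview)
Your proof is correct and follows exactly the route the paper takes: its one-line argument simply cites Proposition~\ref{proplast} and Theorem~\ref{gaugeresult}, and you have spelled out precisely how those two pieces combine (via the morphism $[\pre AB_B,U_B]$ from Lemma~\ref{XinXB}). You are also right to flag the implicit injectivity assumption on \BYB\ --- the paper does not state it in the corollary, but its own proof needs it, since Proposition~\ref{proplast} explicitly requires both correspondences to be injective and Lemma~\ref{XinXB} uses injectivity of \BYB\ to secure $J_X\cdot B\subset J_Y$.
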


\begin{proof}
Follows from Proposition \ref{proplast} and Theorem \ref{gaugeresult}.
\end{proof}

When \AXA\ is a regular \C-correspondence we may view \big[$\pre AX_A^{\otimes n}$, $1_{X^{\otimes (n+1)}}$\big] as a morphism from \AXA\ to \AXA\ in \cat . Let $(\pi, \Phi)$ be the associated \ec-covariant representation, and $(\Upsilon,t)$ be the universal covariant representation of \AXA . Then, 
the homomorphism $\sigma :  \OX\rightarrow \KK(X^{\otimes n} \ots \OX)$ defines a left action of $\OX$ on $(X^{\otimes n} \ots \OX).$ Now, let $S\in\OX$, and let $y=(y_1\ots y_2\ots...\ots y_n)\in X^{\otimes n}.$ By the construction of $(\pi, \Phi)$ we have 
\begin{align*}
\Phi(x)(y\ots S)&=(x\ots y_1\ots....\ots y_{n-1})\ots t(y_n)S \\
\pi(a)&= \varphi_n(a) \ot 1_{\OX},
\end{align*}
for any $a\in A$, and $x\in X,$ where $\varphi_n(a)$ denotes the operator $\varphi_X(a) \ot  1_{X^{\otimes n-1}} \in \LL(X^{\otimes n})$. We now have the following Proposition.

\begin{proposition}\label{regularcuntziso}
For a regular \C-correspondence \AXA , and for $n>0$, we have the isomorphism $X^{\otimes n} \ots \OX \cong \OX$ as $\OX-\OX$ correspondences. 
\end{proposition}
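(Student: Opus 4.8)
The plan is to write down an explicit candidate isomorphism and verify it directly. Let $(\Upsilon,t)$ be the universal covariant representation of $\pre AX_A$, with associated maps $t^k\colon X^{\otimes k}\to\OX$ as in the preliminaries (so that $t^k(x_1\ots\cdots\ots x_k)=t(x_1)\cdots t(x_k)$), and define
\[W\colon X^{\otimes n}\ots\OX\longrightarrow\OX,\qquad W(y\ots S)=t^n(y)\,S\]
on elementary tensors. First I would record the prerequisites: $X^{\otimes n}$ is again regular, since its left action $a\mapsto\varphi_X(a)\ot 1_{X^{\otimes(n-1)}}$ is injective and takes values in $\KK(X^{\otimes n})$ by \lemref{compacts}; hence $[\pre AX_A^{\otimes n},1_{X^{\otimes(n+1)}}]$ is genuinely a morphism $\pre AX_A\to\pre AX_A$ in $\cat$, and the homomorphism $\sigma\colon\OX\to\KK(X^{\otimes n}\ots\OX)$ obtained from the \ec-covariant representation $(\pi,\Phi)$ by universality, together with the explicit formulas $\Phi(x)(y\ots S)=(x\ots y_1\ots\cdots\ots y_{n-1})\ots t(y_n)S$ and $\pi(a)=\varphi_n(a)\ot 1_{\OX}$ recorded just before the statement, are at our disposal. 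It is $\sigma$ that equips $X^{\otimes n}\ots\OX$ with its left $\OX$-action.

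Next I would show that $W$ is an isomorphism of $A-\OX$ correspondences. Because $(\Upsilon,t^n)$ is a representation of $\pre AX^{\otimes n}_A$, we have $t^n(y)^*t^n(y')=\Upsilon(\langle y,y'\rangle_A)$, whence
\[\langle W(y\ots S),W(y'\ots S')\rangle_{\OX}=S^*\,t^n(y)^*t^n(y')\,S'=S^*\,\Upsilon(\langle y,y'\rangle_A)\,S'=\langle y\ots S,y'\ots S'\rangle_{\OX};\]
combined with $W\big(a\cdot(y\ots S)\big)=t^n\big((\varphi_X(a)y_1)\ots y_2\ots\cdots\ots y_n\big)S=\Upsilon(a)t^n(y)S=\Upsilon(a)W(y\ots S)$, which uses the representation identity $t(\varphi_X(a)y_1)=\Upsilon(a)t(y_1)$, \propref{pre} shows that $W$ extends to an injective $A-\OX$ correspondence homomorphism, which obviously respects the right $\OX$-action. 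For surjectivity, regularity of $\pre AX_A$ gives $\overline{t(X)\OX}=\OX$ by \lemref{isos}(iii), and iterating this identity yields $\clspn\{t^n(y)S:y\in X^{\otimes n},\ S\in\OX\}=\OX$; hence $W$ is onto, and therefore unitary.

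It then remains to promote $W$ to an isomorphism of $\OX-\OX$ correspondences, that is, to check that $W(\sigma(P)\xi)=P\cdot W(\xi)$ for all $P\in\OX$ and $\xi\in X^{\otimes n}\ots\OX$. Using the shift formulas for $\Phi$ and $\pi$ this is a one-line computation when $P=t(x)$ (here $\sigma(t(x))=\Phi(x)$, and one invokes the multiplicativity identity $t^{n-1}(y_1\ots\cdots\ots y_{n-1})t(y_n)=t^n(y)$) and when $P=\Upsilon(a)$ (here $\sigma(\Upsilon(a))=\pi(a)$). The set $\mathcal{S}$ of $P$ satisfying the intertwining relation is a norm-closed subalgebra of $\OX$ containing $t(X)\cup\Upsilon(A)$; and since $W$ is unitary, the identities $\langle W(\sigma(P)^*\xi),W(\eta)\rangle_{\OX}=\langle\xi,\sigma(P)\eta\rangle_{\OX}=\langle W(\xi),PW(\eta)\rangle_{\OX}=\langle P^*W(\xi),W(\eta)\rangle_{\OX}$ together with the surjectivity of $W$ force $\mathcal{S}$ to be closed under adjoints, so $\mathcal{S}=C^*(\Upsilon,t)=\OX$. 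This yields the claimed isomorphism $X^{\otimes n}\ots\OX\cong\OX$ of $\OX-\OX$ correspondences.

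I expect the only real work to be bookkeeping with the maps $t^k$ — their defining recursion, their multiplicativity $t^{j+k}(\eta\ots\zeta)=t^j(\eta)t^k(\zeta)$, and the identity $t(\varphi_X(a)y)=\Upsilon(a)t(y)$ — matched against the explicit \emph{shift} descriptions of $\Phi$ and $\pi$. The one step that is not purely mechanical is the observation that unitarity of $W$ forces the intertwining set $\mathcal{S}$ to be $*$-closed, which is precisely what allows the left-module check to be reduced to the generators $t(x)$ and $\Upsilon(a)$ of $\OX$.
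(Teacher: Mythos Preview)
Your proposal is correct and follows essentially the same route as the paper: both define the same map $y\ots S\mapsto t^n(y)S$, invoke \lemref{isos}(iii) (you iterate it explicitly) for the $A$--$\OX$ correspondence isomorphism, and then verify the left $\OX$-module structure on the generators $t(x)$ and $\Upsilon(a)$ using the shift formulas for $(\pi,\Phi)$. Your explicit $*$-closure argument for the intertwining set $\mathcal S$ is a welcome refinement---the paper simply asserts that checking on $t(x)$ and $\Upsilon(a)$ suffices---but the overall strategy is the same.
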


\begin{proof} Let $n>0.$  Let $(\Upsilon,t)$ denote the universal covariant representation of \AXA , and let $(\pi, \Phi)$ be the \ec-covariant representation of \AXA\ on $\KK(X^{\otimes n}\ots \OX)$. Lemma \ref{isos} (iii) implies that the map $U:  X^{\otimes n}\ots \OX \rightarrow \OX $ determined on elementary tensors by $z\ots S \mapsto t^n(z)S$ is a Hilbert $A-\OX$ module isomorphism. Since $t(x)$ and $\Upsilon(a)$ generates $\OX$, it suffices to show 
\[ U( t(x)\cdot \xi ) =t(x)U(\xi), \midtext{and} U( \Upsilon(a) \cdot  \xi)= \Upsilon(a)U(\xi),\]
for any $\xi\in (X^{\otimes n}\ots \OX). $ Let $y_i = y_{i,1}\ots y_{i,2}\ots...\ots y_{i,n}\in X^{\otimes n},$ $S_i\in\OX$ for any $i\in F\subset \NN$ finite.  Then we have 
\begin{align*}
t(x)\cdot \sum_{i\in F}(y_i \ots S_i) & = \sigma_X(t(x))\sum_{i\in F}(y_i \ots S_i) \\
&= \Phi(x)\sum_{i\in F}(y_i \ots S_i) \\
&= \sum_{i\in F}\Phi(x)(y_{i,1}\ots y_{i,2}\ots...\ots y_{i,n} \ots S_i) \\
&= \sum_{i\in F} x\ots y_{i,1}\ots y_{i,2}\ots...\ots y_{i,n-1}\ots t(y_{i,n})S_i. 
\end{align*}
This implies 
\begin{align*}
U\Big(t(x)\cdot \sum_{i\in F}(y_i \ots S_i) \Big) &= \sum_{i\in F}t^n(x\ots y_{i,1}\ots y_{i,2}\ots...\ots y_{i,n-1})t(y_{i,n})S_i\\
&=\tsum_{i\in F}t(x)t^{n-1}(y_{i,1}\ots y_{i,2}\ots...\ots y_{i,n-1})t(y_{i,n})S_i\\
&=t(x)\sum_{i\in F}t^n(y_{i,1}\ots y_{i,2}\ots...\ots y_{i,n}\ots S_i)\\
&=t(x)\cdot U\Big(\sum_{i\in F}(y_i\ots S_i)\Big).
\end{align*}
Similarly, for $a\in A$, we have 
\begin{align*}
\Upsilon(a)\cdot \sum_{i\in F}(y_i \ots S_i) &= \sigma_X(\Upsilon(a))\sum_{i\in F}(y_i \ots S_i) \\
&= \pi(a)\sum_{i\in F}(y_i \ots S_i) \\
&= \sum_{i\in F}\varphi_n(a)y_i \ots S_i.
\end{align*}
This implies that 
\begin{align*}
U\Big(\Upsilon(a)\cdot \tsum_{i\in F}(y_i \ots S_i) \Big)&= U\Big(\sum_{i\in F}\varphi_n(a)y_i \ots S_i\Big)\\
&=\sum_{i\in F}  t^n(\varphi_n(a)y_i)S_i \\
&= \sum_{i\in F} \Upsilon(a)t^n(y_i)S_i \\
&= \Upsilon(a)\cdot U\Big(\sum_{i\in F}(y_i \ots S_i) \Big),
\end{align*}
which completes the proof.

\end{proof}

\section{The Functor}

\begin{theorem}\label{main}

Let \tn{[\AMB, $U_M$]}: \AXA $\rightarrow$ \BYB\  be a morphism in \cat . Then the assignments \AXA $\mapsto \OX$ on objects and \[  [\pre AM_B, U_M] \mapsto [\pre {\OX}(M\otss \OY)_{\OY}] \]
on morphisms define a functor $\fun$ from \cat\ to the enchilada category. 

\end{theorem}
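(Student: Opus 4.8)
The plan is to verify the three functor axioms: that $\fun$ sends objects to objects (immediate, since $\OX$ is a well-defined $C^*$-algebra), that it sends morphisms to morphisms (i.e., $[\pre{\OX}(M\otss\OY)_{\OY}]$ is a genuine morphism in the enchilada category, which is already supplied by the Corollary following Proposition \ref{rep}, since $\sigma$ makes $M\otss\OY$ into an $\OX-\OY$ correspondence), that $\fun$ is well-defined on isomorphism classes of pairs, that it preserves identities, and that it preserves composition. The first two points are essentially bookkeeping from what has been established, so the real content is the last three.

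First I would check well-definedness: if $[\pre AM_B,U_M]=[\pre AN_B,U_N]$, witnessed by an isomorphism $\xi:\pre AM_B\to\pre AN_B$ making the defining square commute, then I must show $[\pre{\OX}(M\otss\OY)_{\OY}]=[\pre{\OX}(N\otss\OY)_{\OY}]$ in the enchilada category, i.e., that $\xi\otss 1_{\OY}:M\otss\OY\to N\otss\OY$ is an $\OX-\OY$ correspondence isomorphism. It is automatically a right-Hilbert-$\OY$-module isomorphism and is $\OY$-linear on the right; the only thing to check is that it intertwines the two left $\OX$-actions, i.e., $(\xi\otss 1_{\OY})\circ\sigma_M(S)=\sigma_N(S)\circ(\xi\otss 1_{\OY})$ for $S\in\OX$. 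Since $\OX$ is generated by $\Upsilon_X(A)$ and $t_X(X)$, it suffices to check this on $\pi(a)=\varphi_M(a)\ot 1_{\OY}$ and on $\Phi(x)$. For $\pi(a)$ it follows because $\xi$ intertwines the left $A$-actions on $M$ and $N$. For $\Phi(x)$ one uses the commuting square $(\xi\otss 1_Y)U_M=U_N(1_X\ot\xi)$ together with the construction $\Phi(x)=(1\ot V_Y)(T(x)\ot 1_{\OY})$, so that $T_N(x)\circ\xi=(\xi\otss 1_Y)\circ T_M(x)$ translates directly into $\sigma_N(t_X(x))\circ(\xi\otss 1_{\OY})=(\xi\otss 1_{\OY})\circ\sigma_M(t_X(x))$.

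Next, identities: the identity morphism on \AXA\ in \cat\ is $[\pre AA_A,U_A]$, and I must show $[\pre{\OX}(A\ots\OX)_{\OX}]$ is the identity morphism on $\OX$ in the enchilada category, i.e., isomorphic to $[\pre{\OX}(\OX)_{\OX}]$. This is precisely the content of Lemma \ref{id}: the map $U:A\ots\OX\to\OX$, $a\ots S\mapsto\Upsilon_X(a)S$, is already a right Hilbert $A$-$\OX$-module isomorphism (by Lemma \ref{isos}(iii), as $A=J_X$ when... wait, not necessarily regular) — more carefully, $U$ is a Hilbert-module isomorphism because $\Upsilon_X$ is nondegenerate, and Lemma \ref{id} shows it also intertwines the left $\OX$-actions. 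So this step is just a citation of Lemma \ref{id}.

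Finally, composition, which I expect to be the main obstacle. Given morphisms $[\pre AM_B,U_M]:$ \AXA$\to$\BYB\ and $[\pre BN_C,U_N]:$ \BYB$\to$\CZC, the composite in \cat\ is $[\pre A(M\otss N)_C,U_{M\otss N}]$ with $U_{M\otss N}=(1_M\ot U_N)(U_M\ot 1_N)$, and I must produce an $\OX-\OZ$ correspondence isomorphism
\[
(M\otss N)\otsc\OZ\;\cong\;(M\otss\OY)\otimes_{\OY}(N\otsc\OZ).
\]
The right-hand side is the enchilada composite $\fun([\pre BN_C,U_N])\circ\fun([\pre AM_B,U_M])$. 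As a Hilbert $\OZ$-module isomorphism one has the obvious associativity/rebracketing map together with $V_Y$-type identifications; the work is to check it is $\OX$-linear and $\OZ$-linear for the relevant actions, and here the right $\OY$-balancing on the right-hand side must be reconciled with the fact that $N\otsc\OZ$ carries a left $\OY$-action coming from the $C$-covariant representation of \BYB\ attached to $[\pre BN_C,U_N]$. I would reduce to checking compatibility on generators $t_X(x),\Upsilon_X(a)$ of $\OX$ and $t_Z(z),\Upsilon_Z(c)$ of $\OZ$, unwind the formula for $\Phi$ in each of the three $C$-covariant representations in terms of $U_M$, $U_N$, $U_{M\otss N}$, $V_Y$, $V_Z$, and trace through the pentagon-type diagram relating $(1_M\ot U_N)(U_M\ot 1_N)$ to the separate isomorphisms; the key algebraic input is the compatibility of $V_Y$ with the $\OY$-action appearing in the $C$-covariant representation associated to $N$, essentially an instance of Lemma \ref{isos}(i). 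Modulo that bookkeeping, functoriality follows.
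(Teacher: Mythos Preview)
Your proposal is correct and follows essentially the same route as the paper: preservation of identities is exactly Lemma \ref{id}, and preservation of composition is proved by exhibiting the Hilbert $\OZ$-module isomorphism $U:(M\otss\OY)\otimes_{\OY}(N\otsc\OZ)\to M\otss N\otsc\OZ$ and checking $\OX$-linearity on the generators $\Upsilon_X(a)$ and $t_X(x)$, the crux being precisely the compatibility of $V_Y$ with the left $\OY$-action on $N\otsc\OZ$ (this is the paper's equation (\ref{claim}), which you correctly flag as the key algebraic input). Your explicit verification of well-definedness on isomorphism classes of pairs is a point the paper passes over in silence, and your observation that right $\OZ$-linearity is automatic means you need not check anything on generators of $\OZ$.
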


\begin{proof} Let \tn{[\AMB, $U_M$]}: \AXA $\rightarrow$ \BYB\ , and \tn{[\BNC, $U_N$]}: \BYB $\rightarrow$ \CZC\  be morphisms in \cat . We want to show  \begin{itemize}
\item $\fun\left( [\pre A(M\otss N)_C, U_{M\otss N}] \right) = \fun$([\AMB, $U_M$])${\otimes}_{\OY}\fun$([\BNC, $U_N$]), and
\item $\fun([\pre AA_A, U_A]) = [\pre {O_X}(O_X)_{\OX}]$.
\end{itemize}
We start with proving the isomorphism
\begin{equation}\label{composition}
\pre {\OX}(M\otss \OY){\otimes}_{\OY} (N\otsc \OZ)_{\OZ} \cong \pre {\OX}(M\otss N\otsc \OZ)_{\OZ}. 
\end{equation}
Let $(\pi_1, \Phi_1)$, and  $(\pi_2, \Phi_2)$ be the \ec-covariant representations of \AXA\ , as in Definition \ref{rep}, on $\KK(M\otss \OY)$ and $\KK(M\otss N \otsc \OZ)$, respectively. Let $(\pi, \Phi)$ be the \ec-covariant representation of \BYB\ on $\KK(N\otsc\OZ).$ We already have the Hilbert $A-\OZ$ correspondence isomorphism 
\[ U: (M\otss \OY)\oy (N\otsc \OZ)  \rightarrow (M\otss N \otsc \OZ), \]
which gives rise to the isomorphism
\[ \ad U: \LL\big((M\otss \OY)\oy (N\otsc \OZ)\big) \rightarrow \LL\big(M\otss N \otsc \OZ\big).\]
Therefore, it suffices to show $U$ preserves the left $\OX$-module structure. Since $U$ preserves the left action of $A$, for any $a\in A$ we observe
\begin{align*}
\ad U\big(\pi_1(a)\ot 1_{N\otsc \OZ}\big) &= \ad U\big( (\varphi_M(a)\ot 1_{\OY}) \ot 1_{N\otsc \OZ}\big)\\
& = \varphi_M(a)\ot 1_N \ot 1_{\OZ} \\
&= \pi_2(a). 
\end{align*} By following the construction of $\Phi_1(x)$ and $\Phi_2(x),$ we next show similarly that \[\ad U(\Phi_1(x)\ot1_{N\otsc \OZ} ) = \Phi_2(x).\] Let $t_Y$ and $t_Z$ be the linear maps associated to the universal covariant representations of \BYB\ and \CZC\ , respectively. Consider the isomorphisms
\begin{align*}
&U_M: X\ots M \rightarrow M\otss Y, & & U_N: Y\otss N \rightarrow N\otsc Z, \\
&V_Y: Y\otss \OY\rightarrow \overline{t_Y(Y)\OY}, & & V_Z: Z\otsc \OZ\rightarrow \overline{t_Z(Z)\OZ},
\end{align*}
\[U_{M\otss N}: X\ots M\otss N \rightarrow M\otss N\otsc Z .\]
For $x\in X,$ $y\in Y$, we have the linear maps 
\begin{align*}
&T_1(x): M\rightarrow M\otss Y, &m&\mapsto U_M(x\ots m)\\
&T(y): N\rightarrow N\otsc Z, &n&\mapsto U_N(y\otss n)\\
&T_2(x): M\otss N \rightarrow M\otss N\otsc Z, &\nu&\mapsto U_{M\otss N}(x\ots\nu).
\end{align*}
Notice that for $x\in X$, $m\in M$ and $n\in N$ we have
\begin{align*}
T_2(x)(m\otss n) & = U_{M\otss N}(x\ots m \otss n)\\
&=(1_M\ot U_N)[U_M(x\ots m)\otss n] \\
&= (1_M\ot U_N)(T_1(x)\ot 1_N)(m\otss n),
\end{align*}
which implies that
 \begin{align}
\Phi_2(x) &=  (1_{M\otss N}\ot V_Z)(T_2(x)\ot 1_{\OZ}) \nonumber \\
 &=(1_{M\otss N}\ot V_Z)(1_M\ot U_N \ot 1_{\OZ})(T_1(x)\ot 1_N\ot 1_{\OZ}).\label{phi2}
 \end{align}
On the other hand, recall that $\Phi_1(x)$ and $\Phi(y)$ are defined by \begin{equation}\label{eqs}\Phi_1(x)= (1_M\ot V_Y)(T_1(x)\ot 1_{\OY}), \midtext{and} \Phi(y)=(1_N\ot V_Z)(T(y)\ot 1_{\OZ}).
\end{equation}  We aim to prove the equality
\begin{equation}\label{maineq}
\Phi_2(x)U=U(\Phi_1(x) \ot 1_{N\otsc \OZ}). 
\end{equation}
Let  $\iota_{\OY}$ be the isomorphism
\[\iota_{\OY}:\pre B(\OY)\oy (N\otsc \OZ)_{\OZ} \rightarrow  \pre B(N\otsc\OZ)_{\OZ}\] determined by  $S\oy \nu \mapsto \sigma_Y(S)\nu$, for $S\in \OY$, $\nu\in N\otsc \OZ$, where $\sigma_Y$ denotes the left action of $\OY$ on the Hilbert module $N\otsc\OY$. We first claim 
\begin{equation}\label{claim}
(1_M\ot 1_N\ot V_Z)(1_M\ot U_N\ot 1_{\OZ})(1_M\ot 1_{Y}\ot \iota_{\OY})=U(1_M\ot V_Y\ot 1_{N\otsc \OZ}).
\end{equation}
It suffices to check equality (\ref{claim}) for the elements of form $(m\otss y\otss S)\oy\nu$, where $m\in M,$ $y\in Y,$ $S\in\OY,$ $\nu\in (N\otsc\OZ)$: since $V_Y(y\otss S)=t_Y(y)S$ we have
\allowdisplaybreaks
\begin{align*}
&U(1_M\ot V_Y\ot 1_{N\otsc \OZ})(m\otss y\otss S\oy\nu)\\
&= U(m\otss t_Y(y)S\oy\nu) \\
&=m\otss\sigma_Y(t_Y(y)S)\nu \\
&=m\otss \Phi(y)\sigma_Y(S)\nu  & \text{($\sigma_Y(t_Y(y))=\Phi(y)$)}\\
&= (1_M\ot 1_N\ot V_Z)(1_M\ot T(y)\ot 1_{\OZ})(m\otss \sigma_Y(S)\nu) & \text{(\ref{eqs})}
\end{align*}
Since by construction we have $\big(T(y)\ot 1_{\OZ}\big)(\xi)= \big(U_N\ot 1_{\OZ}\big)(y\otsc \xi),$ for any $y\in Y,$ $\xi\in N\otsc\OZ$, we may continue our computation as
\begin{align*}
&= (1_M\ot 1_N\ot V_Z)(1_M\ot U_N\ot 1_{\OZ})(m\otss y\otss \sigma_Y(S)\nu)\\
&= (1_M\ot 1_N\ot V_Z)(1_M\ot U_N\ot 1_{\OZ})(1_M\ot 1_{Y}\ot \iota_{\OY})(m\otss y\oy S\oy \nu),
\end{align*}
which completes the proof of our claim. 

We are now ready to prove equality \ref{maineq}.  Once again let  $m\in M,$ $S\in\OY,$ $\nu\in (N\otsc\OZ)$. We have 
\begin{align*}
&\Phi_2(x)U(m\otss S\oy \nu)\\
&\myeq(1_M\ot 1_N\ot V_Z)(1_M\ot U_N\ot 1_{\OZ})(T_1(x)\ot 1_N\ot 1_{\OZ})U(m\otss S\oy \nu)\\
&= (1_M\ot 1_N\ot V_Z)(1_M\ot U_N\ot 1_{\OZ})(T_1(x)\ot 1_N\ot 1_{\OZ})[m\otss \sigma_Y(S)\nu] \\
&= (1_M\ot 1_N\ot V_Z)(1_M\ot U_N\ot 1_{\OZ})[U_M(x\ots m)\otss  \sigma_Y(S)\nu]\\
&= (1_M\ot 1_N\ot V_Z)(1_M\ot U_N\ot 1_{\OZ})(1_M\ot 1_{Y}\ot \iota_{\OY})(U_M(x\ots m)\otss S\oy\nu)\\
&\myothereq U(1_M\ot V_Y\ot 1_{N\otsc \OZ})(U_M(x\ots m)\otss S\oy\nu) \\
&= U(1_M\ot V_Y\ot 1_{N\otsc \OZ})(T_1(x)\ot 1_{\OY}\ot 1_{N\otsc \OZ})(m\otss S\oy\nu)\\
&= U(\Phi_1(x)\ot 1_{N\otsc\OZ})(m\otss S\oy\nu),
\end{align*}
as desired. The equality (*) is followed by (\ref{phi2}), and the equality (**) is followed by (\ref{claim}).

Now we have the following diagram.
 \begin{center}
\begin{tikzpicture}[node distance=2cm, scale=1, transform shape]
\node (O)  [scale=0.9]  {$\OX$};
\node (A) [left of=O, below of=O, scale=0.9] {$A$};
\node (X) [left of=O, above of=O, scale=0.9] {$X$};
\draw[->] (X) to node[scale=0.8, left] {$t_X$} (O);
\draw[->] (A) to node[scale=0.8, left]{$\Upsilon_X$} (O);
\node (L) [right of=O, scale=0.8]{$\KK(M\otss \OY)$};
\draw[->] (O) to node[scale=0.8]  {$\sigma_X$} (L);
\node (LL) [right of=L, right of=L, scale=0.8]{$\KK\left( (M\otss \OY){\otimes}_{\OY} (N\otsc \OZ) \right)$};
\draw[->] (L) to  (LL);
\draw[->] (X) to node[scale=0.8] {$\Phi_1$} (L);
\draw[->] (A) to node[swap, scale=0.8] {$\pi_1$} (L);
\node (LLL) [right of=LL, below of=LL, scale=0.8]{$\KK(M\otss N\otsc \OZ)$}; 
\draw[->] (LL) to node[scale=0.7]{$\ad U$} (LLL);
\draw[->, out=10, in=40] (X) to node[scale=0.9] {$\Phi_2$} (LLL);
\draw[->, bend right=0.6cm] (A) to node[scale=0.9, swap] {$\pi_2$} (LLL);
\end{tikzpicture}
\end{center}
This means, denoting by $(\Upsilon_X,t_X)$ the Cuntz-Pimsner representation of $\OX$, we have 
\begin{itemize}
\item $\ad U(\sigma_X(\Upsilon_X(a))\ot 1_{N\otsc \OZ} ) = \ad U(\pi_1(a)\ot 1_{N\otsc \OZ}) = \pi_2(a)$
and 
\item $\ad U(\sigma_X(t_X(x))\ot 1_{N\otsc \OZ} )= \ad U(\Phi_1(x)\ot 1_{N\otsc \OZ} ) = \Phi_2(x) $
\end{itemize}
for $a\in A$, $x\in X$, which is enough to conclude that $U$ preserves  the left action of $\OX$, since the elements $\Upsilon_X(a)$ and $t_X(x)$ generate $\OX$. 

It remains to show that $\fun$ maps the identity morphism [\AAA, $U_A$]:\AXA$\rightarrow$ \AXA\ in $\cat$ to the identity morphism [$\pre {\OX}(\OX)_{\OX}$] in the enchilada category. This follows immediately from Lemma \ref{id}.
\end{proof}

\begin{remark}\label{xn}

Let \AXA\ be a regular correspondence. Then, for any n>0, we have 
\[ \fun\left(\big[X^{\otimes n}, 1_{X^{\otimes (n+1)}}\big]\right)=\big[\pre {\OX}{\OX}_{\OX}\big],\]
by Proposition \ref{regularcuntziso}.

\end{remark}

\section{Applications}
\subsection{Muhly \&\ Solel Theorem} 

Muhly and Solel introduced the notion of Morita equivalence for \C-correspondences \citep{MS} as follows: \AXA\ and \BYB\ are called Morita equivalent, denoted by \AXA$\stackrel{SME}{\sim}$\BYB , if there exists an \ibm\ \AMB\ such that 
\[ \pre A(X\ots M)_B \cong \pre A(M\otss Y)_B. \]
They proved that Morita equivalent injective \C-correspondences have Morita equivalent Cuntz-Pimsner algebras. In \citep{EKK}, the authors presented a proof for possibly non-injective \C-correspondences. In this section, we discuss how our functor provides a very practical method to  recover this result. 

First, recall that in the enchilada category  [\AMB] is an isomorphism if and only if \AMB\ is an \ibm . On the other hand, by Proposition \ref{catiso}, we have that  [$\pre AM_B, U_M$]: $\pre AX_A \rightarrow \pre BY_B$ is an isomorphism in \cat\ if and only if  \AXA\ and \BYB\ are Morita equivalent.

\begin{theorem}\label{MS}
If two \C-correspondences \AXA\ and \BYB\ are Morita equivalent, then their Cuntz-Pimsner algebras $\OX$ and $\OY$ are Morita equivalent (in the sense of Rieffel). 
\end{theorem}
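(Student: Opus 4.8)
The plan is to show that the Morita equivalence $\OX \sim_{\mathrm{Morita}} \OY$ is an immediate consequence of the functoriality established in Theorem~\ref{main} together with the characterization of isomorphisms in the two categories. First I would observe that by hypothesis there is an \ibm\ \AMB\ together with an $A$--$B$ correspondence isomorphism $U_M : X\ots M \rightarrow M\otss Y$. Since an \ibm\ is in particular a regular correspondence and, being full on both sides, satisfies $J_X\cdot M \subseteq M\cdot J_Y$ (indeed for an \ibm\ one has $M \cdot J_Y = J_X \cdot M$ by naturality of the Katsura ideal under the Morita equivalence of $A$ and $B$ implemented by $M$), the pair $[\pre AM_B, U_M]$ is a legitimate morphism \AXA\ $\to$ \BYB\ in \cat . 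By Proposition~\ref{catiso} this morphism is in fact an isomorphism in \cat .

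Next I would apply the functor $\fun$ from Theorem~\ref{main}. Functors send isomorphisms to isomorphisms, so $\fun([\pre AM_B, U_M]) = [\pre{\OX}(M\otss\OY)_{\OY}]$ is an isomorphism in the enchilada category from $\OX$ to $\OY$. Now I would invoke the crucial fact recorded in the discussion of the enchilada category (\cite[Lemma 2.4]{enchilada}): a morphism $[\pre CN_D]$ is an isomorphism in the enchilada category if and only if \tn{\pre CN_D} is an \ibm . Therefore $\pre{\OX}(M\otss\OY)_{\OY}$ is an $\OX$--$\OY$ imprimitivity bimodule, which is precisely the statement that $\OX$ and $\OY$ are Morita equivalent in the sense of Rieffel.

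I expect the only genuine point requiring care is the verification that the ideal condition $J_X \cdot M \subseteq M \cdot J_Y$ holds when \AMB\ is an \ibm , so that $[\pre AM_B, U_M]$ qualifies as a morphism in \cat\ in the first place; everything after that is a formal two-line deduction from Theorem~\ref{main}, Proposition~\ref{catiso}, and the characterization of enchilada isomorphisms. This ideal condition should follow from the fact that the \ibm\ $M$ induces a lattice isomorphism between the ideals of $A$ and the ideals of $B$ (sending $I \mapsto \overline{\langle MI, M\rangle_B} = $ the ideal $I'$ with $MI = I'M$), together with the observation that $U_M$ intertwines the left actions and hence carries $\KK(XI)$-type data to $\KK(YI')$-type data, forcing the Katsura ideal of $X$ to correspond to that of $Y$ under this identification. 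I would state this as a short lemma or fold it into the proof, citing \lemref{compacts} for the behaviour of compacts under the relevant tensor products.

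\begin{proof}
Since \AXA\ and \BYB\ are Morita equivalent, there is an \ibm\ \AMB\ and an $A$--$B$ correspondence isomorphism $U_M : X\ots M \to M\otss Y$. An \ibm\ is regular, and because $M$ implements a Morita equivalence between $A$ and $B$ the induced bijection between the ideal lattices of $A$ and $B$ carries $J_X$ to $J_Y$; concretely $J_X\cdot M = M\cdot J_Y$, so in particular $J_X\cdot M \subseteq M\cdot J_Y$. Hence $[\pre AM_B, U_M]$ is a morphism \AXA\ $\to$ \BYB\ in \cat , and by \propref{catiso} it is an isomorphism in \cat . Applying the functor $\fun$ of \theoremref{main}, the morphism $\fun([\pre AM_B, U_M]) = [\pre{\OX}(M\otss\OY)_{\OY}]$ is an isomorphism in the enchilada category. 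As noted in Section~3, a morphism in the enchilada category is an isomorphism precisely when the representing correspondence is an \ibm ; therefore $\pre{\OX}(M\otss\OY)_{\OY}$ is an $\OX$--$\OY$ imprimitivity bimodule, so $\OX$ and $\OY$ are Morita equivalent in the sense of Rieffel.
\end{proof}
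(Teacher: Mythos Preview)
Your proposal is correct and follows essentially the same route as the paper: obtain an isomorphism $[\pre AM_B, U_M]$ in \cat\ from the Morita equivalence data via Proposition~\ref{catiso}, apply the functor $\fun$ of Theorem~\ref{main}, and then invoke the characterization of isomorphisms in the enchilada category to conclude that $M\otss\OY$ is an $\OX$--$\OY$ imprimitivity bimodule. The only difference is that you explicitly flag and address the verification that $J_X\cdot M \subseteq M\cdot J_Y$ (needed for $[\pre AM_B, U_M]$ to be a morphism in \cat\ at all), whereas the paper's proof absorbs this step silently; your added care here is appropriate and does not change the argument's structure.
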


\begin{proof} Since \AXA\ and \BYB\ are Morita equivalent, there exists an \ibm\ with an isomorphism $U_M: X\ots M \rightarrow M\otss Y$, which implies  [\AMB , $U_M$] is an isomorphism in \cat . This means $\fun{[\pre AM_B , U_M]}$=$[\pre {\OX}(M\otss\OY)_{\OY}]$ is an isomorphism in the enchilada category. Hence, the \C-algebras $\OX$ and $\OY$ are Morita equivalent. \end{proof}

\subsection{Cuntz-Pimsner Algebras of Shift Equivalent \C-correspondences}

In 1973 Williams introduced "elementary strong shift equivalence''  and "strong shift equivalence''  for the class of matrices with non-negative integer entries, with the goal of characterizing the topological conjugacy of subshifts of finite type \citep{Williams}. The relations were defined as follows: let $X$ and $Y$ be matrices as described. 
\begin{itemize}
\item $X$ and $Y$ are \emph{elementary strong shift equivalent}, denoted by $X \stackrel{S}{\sim} Y$,  if there exist matrices $R$ and $S$ with non-negative integer entries such that $X=RS$ and $Y=SR.$
\item The transitive closure of the relation $\stackrel{S}{\sim}$ is called \emph{strong shift equivalence}.
\end{itemize}

Putting the result of Williams \citep{Williams} and the result of Cuntz and Krieger \citep{CK} together, one concludes that strong shift equivalent matrices (with non-negative integer entries) have Morita equivalent Cuntz-Krieger algebras. In \citep{MPT}, Muhly, Pask and Tomforde formulated this in the setting of \C-correspondences as follows.

\begin{definition}\normalfont
Two \C-correspondences \AXA\ and \BYB\ are called \emph{elementary strong shift equivalent}, denoted by \AXA$\stackrel{S}{\sim}$\BYB\ , if there are \C-correspondences \ARB\ and \BSA\ such that 
\[ X \cong R\otss S \midtext {and} Y \cong S\ots R \]
as \C-correspondences.

\C-correspondences \AXA\ and \BYB\ are called \emph{strong shift equivalent}, denoted by \AXA$\stackrel{SSE}{\sim}$\BYB , if there are \C-correspondences $\{Z_i\}_{\{0\leq i \leq n\}}$ such that $Z_0=X$, $Z_n=Y$, and $Z_i\stackrel{S}{\sim}Z_{i+1},$ for each $i$.

\end{definition} 

\begin{theorem}[\cite{MPT}]\label{MPT}

If two regular  \C-correspondences \AXA\ and \BYB\ are strong shift equivalent, then their Cuntz-Pimsner algebras $\OX$ and $\OY$ are Morita equivalent. 
\end{theorem}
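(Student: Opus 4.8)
The plan is to reduce to a single elementary strong shift equivalence and then pass it through the functor $\fun$. Since Morita equivalence of \C-algebras is transitive, it suffices to treat one elementary step: assume \AXA\ and \BYB\ are regular and \AXA$\stackrel{S}{\sim}$\BYB , witnessed by regular correspondences \ARB\ and \BSA\ together with \C-correspondence isomorphisms $\alpha\colon X\to R\otss S$ and $\beta\colon Y\to S\ots R$. Out of these I would build an $A-B$ correspondence isomorphism
\[
U_R\colon X\ots R \xrightarrow{\,\alpha\ot 1_R\,} (R\otss S)\ots R \xrightarrow{\ \sim\ } R\otss(S\ots R) \xrightarrow{\,1_R\ot\beta^{-1}\,} R\otss Y,
\]
and, symmetrically, a $B-A$ correspondence isomorphism $U_S\colon Y\otss S\to S\ots X$ from $\beta$ and $\alpha^{-1}$. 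Since \ARB\ and \BSA\ are regular and, by regularity of \AXA\ and \BYB , we have $J_X=A$ and $J_Y=B$ (so that $J_X\cdot R\subset R\cdot J_Y$ and $J_Y\cdot S\subset S\cdot J_X$ hold trivially), the pairs $[\pre AR_B,U_R]\colon$ \AXA $\to$ \BYB\ and $[\pre BS_A,U_S]\colon$ \BYB $\to$ \AXA\ are morphisms in \cat .

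I would then compute the two composites in \cat . By the composition rule,
\[
[\pre BS_A,U_S]\circ[\pre AR_B,U_R]=\big[\pre A(R\otss S)_A,\ (1_R\ot U_S)(U_R\ot 1_S)\big].
\]
Transporting this isomorphism class along $\alpha\colon X\to R\otss S$ and unwinding the definitions of $U_R$ and $U_S$, the factors $\beta$ and $\beta^{-1}$ cancel and, after the canonical reassociation identifications, what remains is the identity $1_{X^{\otimes 2}}$ on $X\ots X$; hence $[\pre BS_A,U_S]\circ[\pre AR_B,U_R]=[X^{\otimes 1},1_{X^{\otimes 2}}]$, and symmetrically $[\pre AR_B,U_R]\circ[\pre BS_A,U_S]=[Y^{\otimes 1},1_{Y^{\otimes 2}}]$. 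Applying the functor $\fun$ of \theoremref{main} and invoking \remarkref{xn}, which gives $\fun([X^{\otimes 1},1_{X^{\otimes 2}}])=[\pre{\OX}(\OX)_{\OX}]$ and $\fun([Y^{\otimes 1},1_{Y^{\otimes 2}}])=[\pre{\OY}(\OY)_{\OY}]$, i.e. the identity morphisms, functoriality forces $\fun([\pre AR_B,U_R])\colon\OX\to\OY$ and $\fun([\pre BS_A,U_S])\colon\OY\to\OX$ to be mutually inverse morphisms of the enchilada category. In particular $\fun([\pre AR_B,U_R])=[\pre{\OX}(R\otss\OY)_{\OY}]$ is an isomorphism there, so by \cite[Lemma~2.4]{enchilada} the bimodule $R\otss\OY$ is an $\OX-\OY$ \ibm\ and $\OX$ is Morita equivalent to $\OY$. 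Finally, running this argument along the chain $Z_0=X,\dots,Z_n=Y$ witnessing the strong shift equivalence and composing the resulting Morita equivalences completes the proof.

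The step I expect to be the main obstacle is the verification that the two composites in \cat\ are exactly $[X^{\otimes 1},1_{X^{\otimes 2}}]$ and $[Y^{\otimes 1},1_{Y^{\otimes 2}}]$: this is a diagram chase in which one must keep track of the associativity isomorphisms and of the identification $R\otss S\cong X$ carefully, and confirm that $U_R$ and $U_S$ are nothing but ``the identity rewritten,'' so that their composite collapses. Once that is in place, everything else is automatic from the functoriality of $\fun$ together with \remarkref{xn}. A secondary point to pin down is that the elementary steps may be chosen through \emph{regular} connecting correspondences $R_i,S_i$; this is precisely where the regularity hypothesis on the strong shift equivalence enters, since it is what allows $[\pre AR_B,U_R]$ and $[\pre BS_A,U_S]$ to be morphisms in \cat\ at all.
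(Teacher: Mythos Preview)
Your proposal is correct and follows essentially the same route as the paper: define $U_R=(1_R\ot\beta^{-1})(\alpha\ot 1_R)$ and $U_S=(1_S\ot\alpha^{-1})(\beta\ot 1_S)$, check that the composite $[R\otss S,(1_R\ot U_S)(U_R\ot 1_S)]$ transported along $\alpha$ is $[X,1_{X^{\otimes 2}}]$ (this is the commuting-square computation the paper writes out, and exactly the ``main obstacle'' you flagged), then apply $\fun$ together with \remarkref{xn} to conclude that $\fun([R,U_R])$ and $\fun([S,U_S])$ are mutual inverses in the enchilada category. Your secondary concern about the regularity of the connecting bimodules $R$ and $S$ is well placed---the paper uses it implicitly when treating $[R,U_R]$ and $[S,U_S]$ as morphisms in \cat ---so you are right to note that this is part of the hypothesis rather than something one derives.
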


\begin{proof} 
Let \AXA\ and \BYB\ be elementary strong shift equivalent. Then, there exists correspondences \ARB\ and \BSA\ with the isomorphisms
\[ \phi_X: X \rightarrow  R\otss S \midtext{and} \phi_Y: Y\rightarrow S\ots R. \]
Define  
\[U_R = (1_R\ot \phi_Y^{-1} )(\phi_X\ot 1_R) \midtext{and} U_S :=(1_S\ot \phi_X^{-1})(\phi_Y\ot 1_S). \]
Notice that we have  
\[(1_R\ot U_S)(U_R\ot 1_S) = (\phi_X\ot \phi_X^{-1}). \]
This allows us to see that the diagram
\[
\begin{tikzcd}
X\ots (R\otss S) \arrow{r}{1_X\ot \phi_X^{-1}} \arrow[swap]{d}{(1_R \ot U_S)(U_R\ot 1_S)} & X\ots X \arrow{d}{1_{X^{\otimes 2}}} \\
(R\otss S)\ots X \arrow{r}\arrow{r}{\phi_X^{-1} \ot 1_X} & X\ots X
\end{tikzcd}
\]
commutes, which implies the equality $\big[ R\otss S, U_{R\otss S} \big] = \big[X, 1_{X^{\otimes 2}}\big].$  

We now show that 
$\fun\left([S, U_S]\right)$ and $\fun\left([R, U_R]\right)$ are inverses of each other: 
\begin{align*}
\fun\left([S, U_S]\right)\circ  \fun\left([R, U_R]\right) &= \fun\left([S, U_S] \circ [R, U_R] \right) \\
&= \fun\left([R\otss S, U_{R\otss S}]\right)\\
&= \fun\left([X, 1_{X^{\otimes 2}}]\right)\\
&= [\pre {\mathcal{O}_X}{\mathcal{O}_X}_{\OX}],
\end{align*}
where the last step follows by Remark \ref{xn}. It can be seen similarly that 
\[ \fun\left([R, U_R]\right )\circ \fun\left([S, U_S] \right) =  [\pre {\OY}{\OY}_{\OY}].\] Hence, the correspondences $\pre {\OX}(R\otss \OY)_{\OY}$ and $\pre  {\OX}(S\ots \OX)_{\OX}$ are inverses of each other.  \end{proof}

\begin{corollary}\label{ss}
If \AXA\ and \BYB\ are regular elementary strong shift equivalent \C-correspondences via \ARB\ and \BSA , then the injective homomorphisms
\[ \sigma_Y: \OY\rightarrow \KK(S\ots\OX) \midtext{and} \sigma_X: \OX\rightarrow \KK(R\otss \OY)\]
are surjective. 
\end{corollary}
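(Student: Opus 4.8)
The plan is to extract surjectivity from the proof of \theoremref{MPT} by unpacking what the functor $\fun$ does to the morphisms $[R, U_R]$ and $[S, U_S]$ at the level of Hilbert modules. Recall that by \theoremref{gaugeresult}, for any morphism in \cat\ the associated homomorphism $\sigma$ is automatically injective; so the only thing to prove is surjectivity. Since the composites $\fun([S,U_S])\circ\fun([R,U_R])$ and $\fun([R,U_R])\circ\fun([S,U_S])$ are the identity morphisms $[\pre{\OX}(\OX)_{\OX}]$ and $[\pre{\OY}(\OY)_{\OY}]$ respectively (as established in the proof of \theoremref{MPT} using \remarkref{xn}), the correspondences $\pre{\OX}(R\otss\OY)_{\OY}$ and $\pre{\OY}(S\ots\OX)_{\OX}$ are mutually inverse \ibm s in the enchilada category. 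In particular each of them is an \ibm.

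First I would spell out why $\sigma_X\colon \OX\to\KK(R\otss\OY)$ is onto. By construction (the Corollary following \propref{rep}), $\pre{\OX}(R\otss\OY)_{\OY}$ is the regular $\OX$-$\OY$ correspondence obtained by viewing $R\otss\OY$ as a left $\OX$-module via $\sigma_X$. Its image in the enchilada category is an \ibm; but the enchilada-category isomorphism class of $\pre{\OX}(R\otss\OY)_{\OY}$ being an \ibm\ means precisely that the left inner product $\pre{\OX}\<R\otss\OY, R\otss\OY\>$ has dense span in $\OX$. On the other hand, by definition of the left action, $\pre{\OX}\<\,\cdot\,,\,\cdot\,\> = \Psi_{\Phi}(\theta_{\cdot,\cdot})$ restricted appropriately, and the closed span of such rank-one operators $\theta_{\xi,\nu}$ over $\xi,\nu\in R\otss\OY$ is all of $\KK(R\otss\OY)$; hence $\overline{\sigma_X(\OX)} = \KK(R\otss\OY)$, and since $\sigma_X$ is a $*$-homomorphism of \C-algebras with dense range, it is surjective. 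The argument for $\sigma_Y\colon\OY\to\KK(S\ots\OX)$ is identical with the roles of $X,R$ and $Y,S$ interchanged.

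Alternatively — and this is perhaps the cleanest route — one observes that \theoremref{gaugeresult} already tells us $\OX\cong\C(\pi,\Phi)\subset\KK(R\otss\OY)$, and it suffices to check that the generating operators $\Phi(x)=(1_R\ot V_Y)(T(x)\ot 1_{\OY})$ and $\pi(a)=\varphi_R(a)\ot 1_{\OY}$ already generate all of $\KK(R\otss\OY)$. Using that \ARB\ is regular one gets $\varphi_R(A)\subset\KK(R)$ dense-spanning appropriately, together with $U_R$ an isomorphism and $V_Y$ unitary, and the fact that $\OY=\overline{t_Y(Y)\OY}+\Upsilon_Y(A)\OY$ by \lemref{isos}; combining these, the rank-one operators $\theta_{\xi,\nu}$ for elementary tensors $\xi,\nu$ lie in $\C(\pi,\Phi)$. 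I expect the main obstacle to be bookkeeping: keeping track of which tensor-leg each operator acts on and confirming that the closed span of products $\Phi(x)\pi(a)\cdots\Phi(x')^*$ really exhausts $\KK(R\otss\OY)$ rather than some proper hereditary subalgebra. This is where regularity of $R$ and fullness of $Y$ (forced by the elementary shift equivalence $Y\cong S\ots R$) must be used essentially; without them the image could be a proper ideal's worth of compacts.
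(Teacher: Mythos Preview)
Your first approach is correct and is precisely the (implicit) argument the paper intends: since the proof of \theoremref{MPT} shows that $\pre{\OX}(R\otss\OY)_{\OY}$ and $\pre{\OY}(S\ots\OX)_{\OX}$ are mutually inverse in the enchilada category, each is an \ibm, and for any \ibm\ the left-action homomorphism is an isomorphism onto the compacts---so $\sigma_X$ and $\sigma_Y$ are surjective. One small correction: the relevant identity is $\sigma_X\bigl(\pre{\OX}\<\xi,\nu\>\bigr)=\theta_{\xi,\nu}$ (the Hilbert-bimodule compatibility condition), not anything involving $\Psi_{\Phi}$, which in this paper denotes the map $\KK(X)\to\KK(R\otss\OY)$ attached to the representation $(\pi,\Phi)$ and plays no role here.
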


\begin{example} Let \AXA\ be a regular \C-correspondence. Then, the \C-correspondence $\pre {\KK(X)}\big(X\ots \KK(X)\big)_{\KK(X)}$ is regular, as well. Moreover, we have 
\[ \tn{\AXA\ $\stackrel{S}{\sim}$  $\pre {\KK(X)}\big(X\ots \KK(X)\big)_{\KK(X)}$ } \]
via $\pre {\KK(X)}X_A$ and $\pre A\KK(X)_{\KK(X)}$.  Denote $X\ots \KK(X)$ by $\XX$. We know by Theorem \ref{MPT} that $\OX$ and $\OXXX$ are Morita equivalent. However, Corollary \ref{ss} implies a stronger relation between these \C-algebras:
\[\OX \cong \KK\big(\KK(X){\otimes}_{\KK(X)} \OXXX\big)\cong \OXXX , \]
where the latter isomorphism is the natural \C-algebra isomorphism as described in  the proof of Proposition \ref{proplast}. 
\end{example}

Let $G$ be a locally compact group with $\alpha: G\curvearrowright A$ and $\beta:  G\curvearrowright B$. 
An  \emph{$\alpha-\beta$ compatible action}  $\gamma$ of $G$ on \AXB\ is a homomorphism of $G$ into the group of invertible linear maps on $X$ such that 
\vspace{.2cm}
\begin{enumerate}[(i)]
\item $\gamma_s(a\cdot x)=\alpha_s(a)\cdot \gamma_s(x)$
\vspace{.2cm}
\item $\gamma_s(x\cdot b)=\gamma_s(x)\cdot \beta_s(b)$
\vspace{.2cm}
\item $\< \gamma_s(x), \gamma_s(y)\>_B = \beta_s(\<x,y\>_B)$
\end{enumerate}
\vspace{.2cm}
for each $s\in G,$ $a\in A,$ $x\in X,$ and $b\in B$; and such that each map $s\mapsto \gamma_s(x)$ is continuous from $G$ into $X$.

\begin{definition}
Let $(\pi_1, t_1)$ be a representation of \AXA\ that admits a gauge action $\alpha$, and let $(\pi_2, t_2)$ be a representation of \BYB\ that admits a gauge action $\beta$. Let $M$ be a  \C$(\pi_1, t_1)$-\C$(\pi_2, t_2)$ \ibm\ .
The Morita equivalence between \C$(\pi_1, t_1)$ and \C$(\pi_2, t_2)$ is called \emph{gauge equivariant} if there exists an $\alpha-\beta$ compatible action of $\mathbb{T}$ on $M$. 
\end{definition}

\begin{theorem}\label{equi}
The Morita equivalence in Theorem \ref{MPT} is gauge equivariant.
\end{theorem}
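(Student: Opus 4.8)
The plan is to construct an explicit $\TT$-action on the imprimitivity bimodule produced by $\fun$ in the elementary case, and then tensor up along the chain. So suppose first that \AXA$\stackrel{S}{\sim}$\BYB\ via the (regular) correspondences \ARB\ and \BSA, and recall from the proof of \theoremref{MPT} that the Morita equivalence $\OX\sim\OY$ is implemented by the \ibm\ $\pre{\OX}(R\otss\OY)_{\OY}$, where $\OX$ acts through the homomorphism $\sigma_X$ of the C-covariant representation $(\pi,\Phi)$ of \AXA\ on $\KK(R\otss\OY)$. Writing $\gamma^X$ and $\gamma^Y$ for the gauge actions of $\OX$ and $\OY$, I would set
\[ v_z := 1_R\ot\gamma^Y_z\colon R\otss\OY\longrightarrow R\otss\OY,\qquad z\in\TT. \]
By the argument in the proof of \propref{gauge} each $v_z$ is a well-defined bounded linear operator, invertible with inverse $1_R\ot\gamma^Y_{\bar z}$, and $z\mapsto v_z$ is multiplicative; hence $v$ is a homomorphism of $\TT$ into the invertible linear maps of $R\otss\OY$.

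Next I would check that $v$ is a $\gamma^X$--$\gamma^Y$ compatible action. Condition (iii), $\<v_z\xi,v_z\eta\>_{\OY}=\gamma^Y_z(\<\xi,\eta\>_{\OY})$, is the inner-product form of the norm identity already established in the proof of \propref{gauge} (it uses only that $\gamma^Y_z$ fixes $\Upsilon_Y(B)$). Condition (ii), $v_z(\xi\cdot S)=v_z(\xi)\cdot\gamma^Y_z(S)$ for $S\in\OY$, is immediate on elementary tensors and passes to $R\otss\OY$ by density. For condition (i) I would invoke \propref{gauge} once more: conjugation by $v_z$ is the gauge action $\beta_z$ on $C^*(\pi,\Phi)$ with $\beta_z(\Phi(x))=z\Phi(x)$ and $\beta_z(\pi(a))=\pi(a)$, and since $\sigma_X(t_X(x))=\Phi(x)$, $\sigma_X(\Upsilon_X(a))=\pi(a)$, and $t_X(X)\cup\Upsilon_X(A)$ generates $\OX$, the homomorphisms $\sigma_X\circ\gamma^X_z$ and $\beta_z\circ\sigma_X$ of $\OX$ into $C^*(\pi,\Phi)$ agree on generators, hence coincide; unravelling this gives $v_z(T\cdot\xi)=\gamma^X_z(T)\cdot v_z(\xi)$ for all $T\in\OX$ and $\xi\in R\otss\OY$. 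Strong continuity of $z\mapsto v_z(\xi)$ follows from that of $\gamma^Y$ together with $\norm{v_z}=1$ and density of elementary tensors. This settles the elementary case.

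For a general strong shift equivalence $X=Z_0\stackrel{S}{\sim}Z_1\stackrel{S}{\sim}\cdots\stackrel{S}{\sim}Z_n=Y$, the Morita equivalence of \theoremref{MPT} is implemented by the balanced tensor product of the elementary bimodules $M_i$ (over the algebras $\mathcal O_{Z_i}$), each carrying a $\gamma^{Z_i}$--$\gamma^{Z_{i+1}}$ compatible action $v^{(i)}$ by the first part. I would then record the routine fact that a compatible $\TT$-action on an imprimitivity bimodule and one on a second imprimitivity bimodule over the matching intermediate Cuntz-Pimsner algebra combine, via $m\ot n\mapsto v_z(m)\ot w_z(n)$, to a compatible $\TT$-action on their balanced tensor product: the $\gamma_z$-twisted isometry property of each $v_z$ makes this map descend to the balanced tensor product and be isometric there, and the three compatibility identities are inherited factorwise. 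Applying this inductively to $v^{(0)},\dots,v^{(n-1)}$ yields the desired $\TT$-action, so the Morita equivalence of \theoremref{MPT} is gauge equivariant.

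The only delicate point is condition (i): the left $\OX$-module structure on $R\otss\OY$ is defined indirectly, through $\sigma_X$ and the C-covariant representation, so its compatibility with $\gamma^X$ is not visibly a ``$1\ot\gamma$'' phenomenon — it is exactly this identification that \propref{gauge} provides. The non-adjointability of $1_R\ot\gamma^Y_z$ (flagged already in the proof of \propref{gauge}) is harmless, since a compatible action is only required to consist of invertible linear maps.
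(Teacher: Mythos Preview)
Your argument is correct and follows the same strategy as the paper: the compatible action on $\pre{\OX}(R\otss\OY)_{\OY}$ is $z\mapsto 1_R\ot\gamma^Y_z$, and compatibility with the left $\OX$-action is checked on the generators $t_X(x)$ and $\Upsilon_X(a)$ using the key fact (from the proof of \propref{gauge}) that conjugation by $1_R\ot\gamma^Y_z$ sends $\Phi(x)$ to $z\Phi(x)$ and fixes $\pi(a)$. Where the paper computes condition~(i) directly on elements $r\otss S_n$ (using equation~(\ref{g})), you phrase the same thing as the identity $\sigma_X\circ\gamma^X_z=\beta_z\circ\sigma_X$ and unravel; these are the same calculation. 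Your write-up is in fact a bit more complete than the paper's: you explicitly record condition~(ii) and the strong continuity of $z\mapsto v_z$, and you spell out the passage from the elementary case to a chain of elementary equivalences by tensoring the compatible actions (the paper treats only the elementary step and leaves transitivity implicit).
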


\begin{proof}

Let \AXA\ and \BYB\ be regular elementary strong shift equivalent \C-correspondences via \ARB\ and \BSA . Denote  the universal covariant representation of \AXA\ by $(\Upsilon, t)$, and the \ec-covariant representation on $\KK(R\otss \OY)$ by $(\pi, \Phi).$ By Corollary \ref{ss} we have an isomorphism $\sigma: \OX \rightarrow \KK(R\otss\OY)$ such that \[\sigma(t(x))=\Phi(x) \midtext{and} \sigma(\Upsilon(a))=\pi(a)\]
for any $x\in X$, $a\in A$, which allows us to view  $R\otss\OY$ as an imprimitivity $\OX-\OY$ bimodule. Now,  denote by $\alpha$ the gauge action for $\OX$ and by $\gamma$ the gauge action for $\OY$. We show that the homomorphism $z\mapsto 1_R \ot \gamma_z $ is an $\alpha-\gamma$ compatible action of $\TT$ on the \ibm\  $\pre {\OX}(R\otss\OY)_{\OY}$. To this end, we first prove the equality
\begin{equation}\label{eq}
(1_R\ot\gamma_z)[T\cdot \xi ] = \alpha_z(T)\cdot (1_R\ot\gamma_z)(\xi) 
\end{equation}
for any $T\in\OX$, $\xi\in R\otss\OY$. Let $x\in X$ and $a\in A$. It suffices to  let $T=t(x)$ and $T=\Upsilon(a)$ as such elements generate $\OX$. For $r\in R$ and $S_n\in\On$ we have 
\begin{align*}
\alpha_z(t(x))\cdot (\rz)(r\otss S_n) &= (zt(x))\cdot [z^n (r\otss S_n)]\\
&= \sigma(z t(x))[z^n (r\otss S_n)]\\
& = z^{n+1}\Phi(x)(r\otss S_n)\\
& = (1_R\ot\gamma_z)[t(x)\cdot (r\otss S_n)],
\end{align*}
where the last step follows from (\ref{g}). One can verify (\ref{eq}) for $T=\Upsilon(a)$, very similarly.

Next, we show
\[\< (\rz)\xi, (\rz)\nu \>_{\OY} = \gamma_z\big(\<\xi, \nu\>_{\OY}\big),\]
for $\xi , \nu\in (R\otss \OY)$. Let $r'\in R$, $S_m\in \OY^m$.  We have 
\begin{align*}
\< (\rz)(r\otss S_n), (\rz)(r'\otss S_m)\>_{\OY} &= z^{m-n}\<r\otss S_n, r'\otss S_m\>_{\OY}\\
&= z^{m-n} S_n^*\cdot \<r,r'\>_B \cdot S_m \\
&= \gamma_z \big( S_n^*\cdot \<r,r'\>_B \cdot S_m \big) \\
& = \gamma_z \big( \< r\otss S_n, r'\otss S_m\>_{\OY} \big),
\end{align*}
which completes the proof since elements of form $r\otss S_n$ densely span $R\otss\OY$.
\end{proof}

\subsection{Pimsner Dilations}

For an injective \C-correspondence \AXA , one can construct a Hilbert bimodule that contains a copy of $X$ as a subspace. The Pimsner dilation $\X$, which was first introduced by Pimsner \citep{pimsner}, is the minimal Hilbert bimodule that contains  \AXA\ as a sub-correspondence \cite[Theorem 3.5]{KK}. To describe Pimsner dilations, we use Katsura's so-called \emph{cores}. The detailed information about these particular \C-algebras can be found in \citep{katsura}; here we give a quick review.

For each $n\in \NN$ set $\B_n = \Psi_{t^n}(\KK(X^{\otimes n }) )\subset$ \C$(\pi, t).$ 
Note that $\B_0 := \pi(A)$ and that $\B_n \cong \KK(X^{\otimes n })$ when $(\pi, t )$ is injective.  For $m, n \in \NN$ with $m\leq n$, define $\B_{[m,n]}\subset$ \C$(\pi,t)$ by
\[
 \B_{[m,n]} = \B_m + \B_{m+1} + .... + \B_n .
 \]
We denote $\B_{[n,n]}$ by  $\B_n$ for $n\in \NN .$ All $\B_{[m,n]}$'s are \C-subalgebras of \C$(\pi,t).$ In addition, $\B_{[k,n]}$ is an ideal of $\B_{[m,n]}$ for $m, k, n\in \NN$ with $m\leq k\leq n. $ In particular, $\B_n$ is an ideal of $\B_{[0,n]}$ for each $n\in \NN .$  For $m\in \NN,$ define the  \C-subalgebra $\B_{[m,\infty)}$ of \C$(\pi,t)$ by
\[
\B_{[m,\infty)} = \overline{\bigcup_{n=m}^{\infty} \B_{[m,n]}}. 
\]
Notice that $\B_{[m,\infty)}$ is an inductive limit of the increasing sequence of \C-algebras $ \{\B_{[m,n]} \}_{n=m}^{\infty}. $ The \C-algebra $\core$ is called the $core$ of the \C-algebra  \C$(\pi,t).$ The core $\core$ naturally arises when  \C$(\pi,t)$ admits gauge action $\beta$, and it coincides with the fixed point algebra \C$(\pi,t)^{\beta}.$

Now, the Pimsner dilation is defined as follows: let $(\Upsilon, t)$ be the universal covariant representation of an injective \C-correspondence \AXA . Then 
\begin{equation}\label{XO}
\X:=\overline{t(X)\core} = \clspn\{t(x)k : x\in X, k\in \core\}\
\end{equation}
is a subspace of $\OX .$ We may define right and left actions of $\core$ on $\X$ simply by multiplication.  Notice that for any $\nu, \xi \in \X ,$ we have $\<\nu,\xi\>_{\OX} = \nu^*\xi \in \core.$ Moreover, we observe that
\[\pre {\OX}\<\X,\X\> = \X\X^*= \overline{t(X)\core t(X)^*}=\cores ,\]
and thus $\X$ can be viewed as a \C-correspondence over $\core$ such that  the left action homomorphism $\varphi_{\X}:\core \rightarrow \LL(\X)$ is an isomorphism onto $\KK(\X).$ 

\begin{lemma}\label{JX} Let \AXA\ be an injective \C-correspondence with the universal covariant  representation $(\Upsilon,t).$ Then we have the following. 
\begin{enumerate}[\normalfont(i)] 
\item The Hilbert $\core$-modules $(X\ots\core )$ and $\X$ are isomorphic. 
\item $J_{\X} = \cores$.
\item The isomorphism class $[\pre A{\core}_{\core}]$ is a morphism \AXA $\rightarrow \pre {\core}{\X}_{\core}$ in \cat .  
\end{enumerate}
\end{lemma}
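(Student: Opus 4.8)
The plan is to establish the three items in order, since (ii) uses the description of $\X$ as a correspondence over $\core$ and (iii) packages (i) and (ii) through Lemma \ref{XinXB}. For item (i), I would invoke Lemma \ref{isos}(i) with $\OX$ in place of the ambient algebra: that lemma already gives an $A$--$\OX$ correspondence isomorphism $X\ots\OX\to\overline{t(X)\OX}$ determined by $x\ot S\mapsto t(x)S$. One then restricts the scalars to $\core$. Concretely, since $\X=\overline{t(X)\core}$ and the semi-inner product of $x\ot k$ with $y\ot k'$ in $X\ots\core$ is $k^*\,\Upsilon(\<x,y\>_A)\,k'=k^* t(x)^* t(y) k'=\<t(x)k,t(y)k'\>_{\OX}\in\core$, the same formula $x\ot k\mapsto t(x)k$ extends (via Proposition \ref{pre}, using that $X\odot\core$ is a pre-correspondence over the dense subalgebras) to an injective $A$--$\core$ correspondence homomorphism $X\ots\core\to\X$, which is surjective by the definition of $\X$. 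Hence $X\ots\core\cong\X$ as Hilbert $\core$-modules (indeed as $A$--$\core$ correspondences).

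For item (ii), recall from the discussion around \eqref{XO} that $\varphi_{\X}\colon\core\to\LL(\X)$ is an isomorphism onto $\KK(\X)$; in particular it is injective, so $(\ker\varphi_{\X})^{\perp}=\core$ and $\varphi_{\X}^{-1}(\KK(\X))=\core$. By definition of the Katsura ideal, $J_{\X}=\varphi_{\X}^{-1}(\KK(\X))\cap(\ker\varphi_{\X})^{\perp}=\core$. But the excerpt computed $\pre{\OX}\<\X,\X\>=\cores$, i.e. the \emph{left-full} range is $\cores$, not all of $\core$ — so I must be careful: $\X$ is regarded as a right Hilbert $\core$-module with left action by $\core$, and it is this left action that is an isomorphism onto $\KK(\X)$. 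Wait — the stated claim is $J_{\X}=\cores$, which forces the base algebra of the correspondence $\X$ to be $\cores$, not $\core$. So the correct reading is: $\X$ is a correspondence over $\cores$ (this is consistent with $\overline{\spn}\,\pre{\OX}\<\X,\X\>=\cores$ being the base, and with $\overline{t(X)\core}=\overline{t(X)\cores}$ since $t(x)\B_0\subset\overline{t(X)t(X)^*t(X)}\subset\overline{t(X)\cores}$). Given this, $\varphi_{\X}\colon\cores\to\LL(\X)$ is injective with image $\KK(\X)$, so exactly as above $J_{\X}=\cores$.

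For item (iii), I would apply Lemma \ref{XinXB}: one must exhibit $\pre AX_A$ as a nondegenerate subcorrespondence of $\pre{\core}\X_{\core}$ (here "over $\core$" meaning over the base $\cores$, or one works with $\core$ and checks nondegeneracy) and verify the ideal condition $J_X\cdot\core\subset J_{\X}$. The embedding is $\phi\colon X\to\X$, $x\mapsto t(x)$ (an isometry since $(\Upsilon,t)$ is injective), together with $\varphi\colon A\to\core$, $a\mapsto\Upsilon(a)=\pi(a)$, which is injective and nondegenerate because $\B_0=\pi(A)$ sits inside $\core$ with $\overline{\pi(A)\core}=\core$ (as $\pi(A)$ acts nondegenerately on each $\B_{[0,n]}$); and $\overline{\phi(X)\core}=\overline{t(X)\core}=\X$ by \eqref{XO}. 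By item (i) the induced map $X\ots\core\to\X$ is an isomorphism, which is precisely the content of the first paragraph of the proof of Lemma \ref{XinXB}. Finally, the ideal condition: for $a\in J_X$ and $k\in\core$, $a\cdot k=\varphi(a)k=\pi(a)k$, and $\pi(a)=\Psi_t(\varphi_X(a))\in\B_1\cdot(\dots)\subset\cores$ by covariance of $(\Upsilon,t)$ — more precisely $\varphi_X(a)\in\KK(X)$ so $\pi(a)=\Psi_t(\varphi_X(a))\in\B_1\subset\cores=J_{\X}$, hence $J_X\cdot\core\subset\cores=J_{\X}$. Lemma \ref{XinXB} then produces the desired morphism $[\pre A\core_\core, U_\core]\colon\pre AX_A\to\pre{\core}\X_{\core}$.

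\medskip
\noindent\emph{Main obstacle.} The delicate point is bookkeeping about which \C-algebra $\X$ is a correspondence over ($\core$ versus $\cores$) and matching this with the statement $J_{\X}=\cores$; once that identification is pinned down, items (i)--(iii) are routine applications of Proposition \ref{pre}, Lemma \ref{isos}, and Lemma \ref{XinXB}. A secondary care point is checking nondegeneracy of $\pi(A)\hookrightarrow\core$, which relies on the structure of the core as an inductive limit of the $\B_{[0,n]}$ with $\B_0=\pi(A)$ an ideal-complemented subalgebra acting nondegenerately.
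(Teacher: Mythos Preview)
Your approach to (i) and (iii) matches the paper's: for (i) you use exactly the map $x\ot k\mapsto t(x)k$ and verify it preserves inner products and left action, and for (iii) you invoke Lemma~\ref{XinXB} after exhibiting $(\phi,\varphi)=(t,\Upsilon)$ as the subcorrespondence embedding. One small difference in (iii): the paper simply notes that $\pre{\core}\X_{\core}$ is \emph{injective}, so the last sentence of Lemma~\ref{XinXB} makes the ideal condition $J_X\cdot\core\subset J_{\X}$ automatic; your direct verification via covariance ($\Upsilon(a)=\Psi_t(\varphi_X(a))\in\B_1\subset\cores$ for $a\in J_X$) is correct but unnecessary.

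There is, however, a genuine gap in your handling of (ii). Your inference ``$J_{\X}=\cores$ forces the base algebra of $\X$ to be $\cores$'' is wrong: the Katsura ideal is in general a \emph{proper} ideal of the coefficient algebra, and here $\X$ is a correspondence over $\core$, not $\cores$. The assertion $\overline{t(X)\core}=\overline{t(X)\cores}$ is also unjustified (and would fail whenever $A\neq J_X$). The correct argument keeps the base as $\core$: the left action $\varphi_{\X}\colon\core\to\LL(\X)$ is injective, so $J_{\X}=\varphi_{\X}^{-1}(\KK(\X))$; since $\X$ is a Hilbert bimodule with $\overline{\pre{\core}\<\X,\X\>}=\cores$, left multiplication identifies $\cores$ isomorphically with $\KK(\X)$, and injectivity of $\varphi_{\X}$ then forces $\varphi_{\X}^{-1}(\KK(\X))=\cores$. (Equivalently, use the standard fact that for a Hilbert bimodule the Katsura ideal equals the range of the left inner product.) The paper's one-line justification for (ii) appeals to the sentence just before the lemma, which --- read literally --- says $\varphi_{\X}$ is an isomorphism of $\core$ onto $\KK(\X)$ and would give $J_{\X}=\core$; what is actually being used is injectivity of $\varphi_{\X}$ together with $\varphi_{\X}(\cores)=\KK(\X)$. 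Your instinct that something needed sorting out here was right, but the resolution is not to change the base algebra.
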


\begin{proof} It is straightforward to verify that the map $X\odot \core \rightarrow \overline{t(X)\core}$ determined on elementary tensors by $x\otimes S\mapsto t(x)S$  preserves the left-module structure and the semi-inner product. Moreover, it is surjective. Hence, it extends to a Hilbert module isomorphism $(X\ots \core) \rightarrow \overline{t(X)\core}$. 

Item (ii) follows from the fact that  $\varphi_{\X}:\core \rightarrow \LL(\X)$ is an isomorphism onto $\KK(\X).$ And, item (iii) follows from Lemma \ref{XinXB}, since \AXA\ is a nondegenerate subcorrespondence of $\pre {\core}{\X}_{\core}$. 
\end{proof}

Lemma \ref{JX} implies that any injective \C-correspondence \AXA\ has a  \ec-covariant  representation $(\pi,\Phi)$ on $\KK(\core\otsb \OXX).$ The \C-algebras $\OX$ and $\OXX$ are isomorphic  \citep[Theorem 2.5]{pimsner}, \citep[Theorem 6.6]{kk3}.  Then Corollary \ref{gaugeresult} tells us that $\OXX$ is nothing but the \C-algebra generated by the \ec-covariant representation of \AXA .  In the next theorem, we present an alternative proof for the isomorphism $\OX\cong \OXX$ by using the \ec-covariant representation $(\pi,\Phi)$. The proof shows the exact relation between the generators of $\OX$, $\OXX$ and \C$(\pi,\Phi)$.

\begin{theorem}
Let \AXA\ be an injective \C-correspondence with the universal covariant representation $(\Upsilon, t)$, and let $\pre \core\X_{\core}$ be as in (\ref{XO}). Denote the associated  C-covariant representation on $\KK(\core\otsb \OXX)$ by $(\pi, \Phi)$. Then, we have the isomorphisms $\OX\cong C^*(\pi, \Phi) \cong \OXX$. 
\end{theorem}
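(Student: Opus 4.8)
The plan is to exhibit the two isomorphisms separately and then compose. For the first isomorphism $\OX \cong C^*(\pi,\Phi)$, I would invoke Theorem \ref{gaugeresult} directly: by Lemma \ref{JX}(iii), the class $[\pre A\core_\core]$ is a genuine morphism $\AXA \to \pre\core\X_\core$ in \cat, so the machinery of Section 4 applies verbatim. The associated C-covariant representation $(\pi,\Phi)$ of \AXA\ on $\KK(\core\otsb\OXX)$ is injective and covariant (Proposition \ref{rep}) and admits a gauge action (Proposition \ref{gauge}); hence by the Gauge Invariant Uniqueness Theorem (Theorem \ref{GUT}), the canonical surjection $\OX \to C^*(\pi,\Phi)$ is an isomorphism. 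This step is essentially a citation of what has already been built.

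For the second isomorphism $C^*(\pi,\Phi) \cong \OXX$, I would use Proposition \ref{proplast}. The Pimsner dilation $\pre\core\X_\core$ is a Hilbert bimodule, in particular it is injective, and by construction (see (\ref{XO}) and the discussion following it) \AXA\ is a nondegenerate subcorrespondence of $\pre\core\X_\core$ via the homomorphism $(\phi,\varphi)$ where $\varphi: A \to \core$ is the inclusion-type map $a\mapsto \pi_0(a)$ coming from the universal representation and $\phi: X \to \X$ is $x \mapsto t(x)$; nondegeneracy and injectivity follow since $(\Upsilon,t)$ is injective and $\X = \overline{t(X)\core}$. Applying Proposition \ref{proplast} with "$B$" $=\core$ and "$Y$" $= \X$, and writing $(\Upsilon_\X, t_\X)$ for the universal covariant representation of $\pre\core\X_\core$, we get a natural $\ast$-isomorphism $\iota: \KK(\core\otsb\OXX) \to \OXX$ satisfying $\iota(\Phi(x)) = t_\X(t(x))$ and $\iota(\pi(a)) = \Upsilon_\X(\pi_0(a))$. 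Thus $C^*(\pi,\Phi)$ is carried isomorphically onto the \C-subalgebra of $\OXX$ generated by $t_\X(t(X))$ and $\Upsilon_\X(\pi_0(A))$. It remains to observe that these elements generate all of $\OXX$: since $\X = \overline{t(X)\core}$ with $\core$ generated inside $\OX$ and hence its image inside $\OXX$ already lies in the closed span of products $\Upsilon_\X(\pi_0(A))$ together with $t_\X$-terms, one checks by the factorization $\X\X^* = \cores$ and the inductive-limit description of $\core$ that the generating set is exhausted; this is the point requiring genuine (though routine) verification.

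The main obstacle I anticipate is precisely this last bookkeeping: confirming that $t_\X(t(X))$ and $\Upsilon_\X(\pi_0(A))$ generate $\OXX$ rather than a proper subalgebra. The subtlety is that $\X$ is a correspondence over the \emph{core} $\core$, which is itself an inductive limit built from $\KK(X^{\otimes n})$, so the left-coefficient algebra $\core$ is not a priori visibly generated by finitely many of the chosen generators. The resolution is to use that $\Upsilon_\X$ restricted to $\core$ recovers the core of $\OXX$ as its fixed-point algebra under the gauge action, together with the identity $\pre{\OXX}\<\X,\X\> = \cores$ and the compatibility relation of the Hilbert-bimodule structure, to propagate from degree-zero and degree-one generators to all of $\core$. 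Once generation is settled, composing the two isomorphisms yields $\OX \cong C^*(\pi,\Phi) \cong \OXX$, and chasing the generators through shows explicitly that $t(x) \mapsto \Phi(x) \mapsto t_\X(t(x))$ and $\Upsilon(a) \mapsto \pi(a) \mapsto \Upsilon_\X(\pi_0(a))$, which is the promised description of the correspondence between generators.
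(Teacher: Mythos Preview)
Your overall strategy coincides with the paper's: obtain $\OX\cong C^*(\pi,\Phi)$ from Theorem~\ref{gaugeresult} via Lemma~\ref{JX}(iii), then use Proposition~\ref{proplast} to identify $C^*(\pi,\Phi)$ with the \C-subalgebra of $\OXX$ generated by $T(t(X))$ and $\Upsilon_{\X}(\Upsilon(A))$, and finally argue that this subalgebra is all of $\OXX$. You also correctly isolate the generation step as the only nontrivial point.

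Where your sketch diverges from the paper is in the mechanism for that generation step. You propose to use that $\Upsilon_{\X}(\core)$ is the gauge fixed-point algebra of $\OXX$ together with the left inner-product identity; but this does not by itself show that $\Upsilon_{\X}(\core)$ lies in the subalgebra generated by $T(t(X))$ and $\Upsilon_{\X}(\Upsilon(A))$, which is what is needed. The paper's argument is more direct and avoids fixed-point considerations entirely: it exploits \emph{covariance} of the universal representation $(\Upsilon_{\X},T)$ on the Katsura ideal $J_{\X}=\cores$ (Lemma~\ref{JX}(ii)). For $k\in\cores$ one has $\Upsilon_{\X}(k)=\Psi_T(\varphi_{\X}(k))$, and the paper then computes $\varphi_{\X}(\Psi_{t^n}(\theta_{\xi,\nu}))$ explicitly as a rank-one operator on $\X$ with legs in $t(X)\core$, e.g.\ $\varphi_{\X}(\Psi_t(\theta_{x,y}))=\theta_{t(x),t(y)}$, whence $\Upsilon_{\X}(\Psi_t(\theta_{x,y}))=T(t(x))T(t(y))^*=\iota(\Psi_{\Phi}(\theta_{x,y}))$. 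An induction on $n$ then handles all of $\cores$, and $\B_0=\Upsilon(A)$ is immediate. This is the concrete ``propagation'' you allude to; the key input you did not name is covariance on $J_{\X}=\cores$, which is precisely what converts coefficient-algebra elements into products of $T$'s and their adjoints.
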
 

\begin{proof} Let $(\Upsilon_{\X}, T)$ be the universal covariant  representation of  $\X$. Let $\iota$ denote the isomorphism 
\[ \KK\big(\core\otsb\OXX\big)\rightarrow \KK(\OXX)\rightarrow \OXX .\]
Proposition \ref{proplast} gives us
\[ T(t(x))=\iota(\Phi(x)) \midtext{and} \Upsilon_{\X}(\Upsilon(a))=\iota(\pi(a))\]
for any $x\in X,$ $a\in A. $ We now have the following diagram: 

\begin{figure}[h!]
\centering
\begin{tikzpicture}[scale=1.2, transform shape]
\node (O) [scale=0.8]  {$\OX$};
\node (A) [below of=O, left of=O, scale=0.8] {$A$};
\node (X) [left of=O, above of=O, scale=0.8] {$X$};
\draw[->] (X) to node[scale=0.8, swap] {$t$} (O);
\draw[->] (A) to node[scale=0.8]{$\Upsilon$} (O);
\node (L) [right of=O, right of=O, scale=0.8]{$\KK\big(\core\otsb \OXX \big)$};
\draw[->] (O) to node[scale=0.7]  {$\sigma$} (L);
\node (LL) [right of=L, right of=L, scale=0.8]{$\OXX$};
\node (XX) [right of=X, right of=X, scale=0.8]{$\X$};
\draw[right hook ->] (X) to (XX);
\draw[->] (L) to node[scale=0.7]  {$\iota$}  (LL);
\draw[->] (XX) to node[scale=0.7] {$T$} (LL);
\draw[->] (X) to node[scale=0.7] {$\Phi$} (L);
\draw[->] (A) to node[swap, scale=0.7] {$\pi$} (L);
\node (AA) [right of=A, right of=A, scale=0.8]{$\core$};
\draw[->] (AA) to node[scale=0.7, swap]{$\Upsilon_{\X}$} (LL);
\draw[right hook ->] (A) to (AA);
\end{tikzpicture}
\end{figure}

Since $\Upsilon_{\X}(\core)$ and $T(\X)$ generate $\OXX$, it suffices to show $\Upsilon_{\X}(k)\in  \iota(C^*(\pi, \Phi))$ and $T(\xi)\in  \iota(C^*(\pi, \Phi))$ for any $k\in\core$, $\xi \in \X$. First, recall that $\varphi_{\X}: \core\rightarrow \LL(\X)$ denotes the left action of $\core$ on $\X$, and $J_{\X}=\cores$. Now, for any $\xi\in \X $ and $x,y\in X$, we have 
\begin{align*}
 \varphi_{\X}\left(\Psi_t(\theta_{x,y})\right)(\xi) = \Psi_t(\theta_{x,y})\xi &= t(x)t(y)^*\xi \\
 &= t(x)\<t(y),\xi\>_{\core} \\
 &= \theta_{t(x),t(y)}(\xi) .
 \end{align*} 
Since $(\Upsilon_{\X}, T)$ is covariant, this allows us to observe that
\begin{align*}
\Upsilon_{\X}\left(\Psi_t(\theta_{x,y})\right) &= \Psi_{T}(\theta_{t(x),t(y)}) \\
& = T\left(t(x)\right)T\left(t(y)\right)^*\\
&= \iota(\Phi(x)\Phi(y)^*)\\
&=\iota(\Psi_{\Phi}(\theta_{x,y})) \in \iota(C^*(\pi, \Phi)).
\end{align*}
Very similarly, for  $k:=\theta_{x_1\ots x_2, y_1\ots y_2}\in \KK(X^{\otimes 2})$ we have, 
\begin{align*}
\varphi_{\X}\left(\Psi_{t^2}(k)\right)(\xi) &= t^2(x_1\ots x_2)t^2(y_1\ots y_2)^*\xi \\
&= t(x_1)t(x_2)t(y_2)^*t(y_1)^*\xi \\
&= \theta_{t(x_1), t(y_1)\Psi_t(\theta_{y_2, x_2})}(\xi) .
\end{align*}
Therefore, we obtain  \[\Upsilon_{\X}(k)= \Psi_{T}(\varphi_{\X}(k) ) = T(t(x_1))T[t(y_1)\Psi_t(\theta_{y_2,x_2})]^*= T(t(x_1))\Upsilon_{\X}(\Psi_t(\theta_{x_2, y_2})) T(t(y_1))^*.\]
This computation allows one to conclude that $\Upsilon_{\X}(k)\in \iota( $\C$(\pi, \Phi)),$ for all $k\in \cores$. 

On the other hand, if $k\in\B_0$, we have $k=\Upsilon(a)$ for some $a\in A$. Thus, $\Upsilon_{\X}(k)=\Upsilon_{\X}(\Upsilon(a))=\iota(\pi(a))\in  \iota(C^*(\pi, \Phi))$. 

To sum up, for any $x\in X$ and $k\in\core$, we have  \[T(t(x)k)=T(t(x))\Upsilon_{\X}(k)\in \iota(C^*(\Phi,\pi)),\] which suffices to conclude that $T(\xi)\in  \iota(C^*(\pi, \Phi))$ for any $\xi\in \X $, since elements of form $t(x)k$ densely span $\X$. 

We have shown that \C$(\Phi, \pi)\cong \OXX$. By Corollary \ref{gaugeresult} we  already have $\OX\cong C^*(\pi, \Phi)$, which completes the proof. 
\end{proof}

\section{Final Notes}
There are more applications of the functor $\mathcal{E}$ and of the \ec-covariant representations. In an upcoming paper, we use the techniques presented in this paper to study the ideals and hereditary subalgebras of Cuntz-Pimsner algebras. 

Meyer and Sehnem \cite{MeySeh} use a similar construction in the context of bicategories. We would like to note here that our development was completely independent; in fact, we were strongly motivated by the paper \cite{fanc}. However, the work of Meyer and Sehnem raises the question of what happens if we drop the injectivity condition on our morphisms in \cat . In that case, we definitely would not have  Corollary \ref{gauge}, as the representation $(\pi, \Phi)$ would not be injective. What we are not sure of is whether the injectivity condition is necessary for $\fun$ to be a functor.


\end{document}